%% filename: amsart-template.tex
%% version: 1.1
%% date: 2014/07/24
%%
%% American Mathematical Society
%% Technical Support
%% Publications Technical Group
%% 201 Charles Street
%% Providence, RI 02904
%% USA
%% tel: (401) 455-4080
%%      (800) 321-4267 (USA and Canada only)
%% fax: (401) 331-3842
%% email: tech-support@ams.org
%% 
%% Copyright 2008-2010, 2014 American Mathematical Society.
%% 
%% This work may be distributed and/or modified under the
%% conditions of the LaTeX Project Public License, either version 1.3c
%% of this license or (at your option) any later version.
%% The latest version of this license is in
%%   http://www.latex-project.org/lppl.txt
%% and version 1.3c or later is part of all distributions of LaTeX
%% version 2005/12/01 or later.
%% 
%% This work has the LPPL maintenance status `maintained'.
%% 
%% The Current Maintainer of this work is the American Mathematical
%% Society.
%%
%% ====================================================================

%     AMS-LaTeX v.2 template for use with amsart
%
%     Remove any commented or uncommented macros you do not use.

\documentclass{amsart}

\usepackage{amsfonts}
\usepackage{latexsym,amssymb}
\usepackage{amsmath, amsbsy}
\usepackage{amsopn, amstext}
\usepackage{graphicx, color, epstopdf}
\usepackage{threeparttable}
\usepackage{float}
\usepackage{multirow}
\usepackage{subfigure}

\newtheorem{theorem}{Theorem}[section]
\newtheorem{lemma}[theorem]{Lemma}

\theoremstyle{definition}

\theoremstyle{remark}
\newtheorem{remark}[theorem]{Remark}

\numberwithin{equation}{section}

\newtheorem{proposition}{Proposition}

\numberwithin{equation}{section}
 \numberwithin{Lem}{section}
 \numberwithin{Defi}{section}
 \numberwithin{Theo}{section}
 \numberwithin{Rem}{section}
  \numberwithin{Coro}{section}
  \numberwithin{Fig}{section}
  
%\renewcommand{\baselinestretch}{1.0}
%\voffset=-0.85truein \hoffset=-0.5truein \textwidth = 15.2truecm
%\textheight = 23truecm

\def\NN{\hbox{\rlap{I}\kern.16em N}}
\def\NC{\hbox{\rlap{\kern.24em\raise.1ex\hbox
                  {\vrule height1.3ex width.9pt}}C}}

\def \bZ {\mathbb Z}

\parskip=4pt plus 1pt minus 1pt

\begin{document}

\title[A $C^1$-conforming Petrov-Galerkin  method] 
{A $C^1$-conforming Petrov-Galerkin method for  convection-diffusion equations and superconvergence ananlysis over rectangular meshes}

%    Remove any unused author tags.

%    author one information
\author{Waixiang Cao}
\address{School of Mathematical Science, Beijing Normal University, Beijing
l00875,  China.}
%\curraddr{}
\email{caowx@bnu.edu.cn}
\thanks{The first author's research was supported in part by NSFC  grant No.11871106.}

%    author two information
\author{Lueling Jia}
\address{Beijing Computational Science Research Center,
Beijing,  100193, China.}
%\curraddr{}
\email{lljia@csrc.ac.cn}
%\thanks{}

%    author two information
\author{Zhimin Zhang}
\address{Beijing Computational Science Research Center,
Beijing,  100193, China; and  Department of Mathematics, Wayne State University,
Detroit, MI 48202, USA.}
%\curraddr{}
\email{zmzhang@csrc.ac.cn}
\thanks{The third author's research was supported in part by NSFC grants  No.11871092 and NSAF U1930402.}

\subjclass[2010]{Primary 65N12, 65N15, 65N30.}

\keywords{Hermite interpolation, $C^1$-conforming, Superconvergence, Petrov-Galerkin methods, Jacobi polynomials}

\date{}

\dedicatory{}

\begin{abstract}
  In this paper, a new $C^1$-conforming Petrov-Galerkin method for convection-diffusion equations is designed and analyzed.
  The trail space of  the proposed method is a $C^1$-conforming ${\mathbb Q}_k$ (i.e., tensor product of polynomials of degree at most $k$) finite element space while the test space is taken as the $L^2$   (discontinuous) piecewise ${\mathbb Q}_{k-2}$ polynomial space. 
  Existence and uniqueness of the numerical solution is proved and optimal error estimates in all $L^2, H^1, H^2$-norms are established.
 In addition, superconvergence properties of the new method are investigated and superconvergence points/lines are identified at mesh nodes  (with order $2k-2$ for both function value and derivatives),  at roots of a special Jacobi polynomial, and at the Lobatto lines and  Gauss lines  with rigorous theoretical analysis.  
 In order to reduce the global regularity requirement, interior a priori error estimates in the $L^2, H^1, H^2$-norms are derived.   Numerical experiments are presented to confirm theoretical findings.
\end{abstract}

\maketitle

 \section{Introduction}

   The Petrov-Galerkin method, using different trail and test spaces, has been widely used in solving
   various partial differential equations such as  second-order wave equations \cite{pgwave}, electromagnetic problems \cite{pg-elec,pg-elec1},
   fluid mechanic equations \cite{pg4,pg6}, and so on.
    Classified by the continuity of the approximation space,  the existing  Petrov-Galerkin method can be roughly divided
   into three categories, i.e., the family of $C^0$ elements that require
the continuity of the numerical solution, the $L^2$ elements (was also called discontinuous Petrov-Galerkin method)
  whose trail space is not necessary to be continuous, and
 the $C^1$ elements that
require the continuity of the trail space and its first-order
derivatives. Comparing with  the $C^0$ and $L^2$ elements Petrov-Galerkin method (see e.g., \cite{dpg3,dpg1,dpg2}) or the counterpart
$C^1$ finite element method (see e.g., \cite{c1,c2,c3}), the $C^1$ Petrov-Galerkin method is still far from fully developed.

 The Petrov-Galerkin method we study in this paper is $C^1$-conforming, where we   use  $C^1$-conforming piecewise $\mathbb Q_k$ polynomials (the tensor product space)  as
 trial functions and  the $L^2$ discontinuous piecewise  $\mathbb Q_{k-2}$ polynomials as test functions.
 Comparing with the continuous Galerkin (i.e., $C^0$ element) and discontinuous Galerkin (i.e., $L^2$ element) methods,
 the most attractive feature  of  the proposed $C^1$-conforming method is the continuity of the derivative approximation across the element interface.
Note that the total  degrees of freedom  of  the $C^1$-conforming method is the same
   or less than the counterpart  $C^0$ and/or $L^2$ element methods over rectangular meshes with the same accuracy. In other words, the   $C^1$-conforming method  provides a better
    approximation  for derivatives (including the second-order derivatives) without increasing the computational cost. Furthermore, the discontinuous test space is used so  that the test functions
     can be locally computed on each element,
    which makes the  assembly of global matrices simpler than the counterpart $C^1$-conforming finite element method, where some test functions across several elements.

   The objective of the present study is to develop   a $C^1$-$L^2$ pair of  Petrov-Galerkin method (i.e., the trial space is $C^1$ while the test space is chosen as $L^2$), using the
   two-dimensional  convection-diffusion equations as model problems. We
    provide  a unified mathematical approach to establish
   convergence theory for the  proposed method including the
   optimal error estimates  in all $H^1, L^2, H^2$-norms and superconvergence results at some special points and lines.
   Note that superconvergence behavior has been investigated for many years. For an incomplete list of references, we refer to
  \cite{Babuska1996,Bramble.Schatz.math.com,Chen.C.M2012,ewing-lazarov-wang,Neittaanmaki1987,V.Thomee.math.comp,Wahlbin,Zhu.QD;LinQ1989}
  for $C^0$ finite element methods, and  \cite{Cai.Z1991,Cao;Zhang;Zou2012,Cao;zhang;zou:2kFVM,Chou_Ye2007,Xu.J.Zou.Q2009} for $C^0$ finite volume methods,
\cite{Adjerid;Massey2006,Adjerid;Weinhart2011,cao-shu-zhang-yang-nonlinear,Cao;zhang:supLDG2k+1,Cao;zhang;zou:2k+1,Chen;Shu:SIAM2010,Xie;Zhang2012,Yang;Shu:SIAM2012}
for discontinuous Galerkin methods, and \cite{zhang,zhang2008} for  spectral Galerkin methods.
Regardless of  rich literatures on the superconvergence study, the relevant work for $C^1$ element methods
is far from satisfied. Only very special and simple cases have been
discussed (see. e.g., \cite{Wahlbin,Bialeck,Bhal}).
To the best of our knowledge, no superconvergence analysis of   the $C^1$ Petrov-Galerkin method has been published yet until our recent work on $C^1$ Petrov-Galerkin and Gauss collocation methods for 1D two-point boundary value problems
in \cite{cao-jia-zhang}.

The  main superconvergence results  established in this paper include: 
1)   $h^{2k-2}$ superconvergence rate for approximations of both function value and the first-order derivatives at mesh nodes;  
2)  $h^{k+2}$ superconvergence rate  for the function value approximation at roots of a special Jacobi   polynomial;
3) $h^{k+1}$ superconvergence rate for the first-order and $h^k$ superconvergence rate for the second-order   derivative approximations at   Lobatto lines and Gauss lines, respectively;
4) as a by-product,  we also prove that  the Petrov-Galerkin solution is superconvergent towards a particular Jacobi projection of the exact solution in $H^2$, $H^1$, and $L^2$-norms.
By doing so, we present a full picture of superconvergence theory for the $C^1$ Petrov-Galerkin method, which gives us some   insights into the difference among the $C^0,C^1,L^2$ element methods.
We have found that the superconvergence points of the solution and its first-order derivative for the $C^1$ Petrov-Galerkin method  are different from those for the existing  $C^0$ Galerkin methods (e.g., FEM, FVM) and $L^2$ discontinuous Galerkin methods. 
The supreconvergence of the  second-order derivative approximation for the $C^1$ Petrov-Galerkin method is also novel.
Comparing with the $\mathbb Q_k$ $C^0$ Galerkin method (see, e.g., FEM in \cite{Chen.C.M2012}, FVM in \cite{Cao;zhang;zou:2kFVM})  for  the Poisson equation over rectangular meshes, which converges with  rate $h^{2k}$ at nodal points,  
the convergence rate at mesh nodes for the $C^1$ Petrov-Galerkin method   drops to  $h^{2k-2}$, while the convergence rate of the first-order derivative at mesh nodes
 improves from  $h^k$ to $h^{2k-2}$, which almost doubles the optimal convergence rate  $h^k$.

 The rest of the paper is organized as follows. In   Section 2, we present a $C^1$-$L^2$ Petrov-Galerkin method for two-dimensional
    convection-diffusion equations over rectangular meshes. In   Section  3, we prove the existence and uniqueness of the numerical
    scheme. In  Section  4, we construct a $C^1$-conforming Jacobi projection of the exact solution and
    study the approximation  and superconvergence properties of the special Jacobi projection.  Section 5 is the main and most
    technical part, where optimal error estimates in $L^2,H^1,H^2$-norms  and superconvergence behavior  at the mesh points (for solution and its first-order derivative approximations), at interior roots of Jacobi polynomials ( solution approximation), at
 Lobatto lines (the first-order derivative approximation) and Gauss lines (the second-order derivative approximation) are investigated. 
  In  Section  6, we establish some interior a priori error estimates in $H^2, H^1, L^2$-norms.
 Numerical experiments supporting our theory are presented in   Section  7. 
 Some concluding remarks are provided in   Section  8.

   Throughout this paper,  we adopt standard notations for Sobolev spaces such as $W^{m,p}(D)$ on sub-domain $D\subset\Omega$ equipped with
    the norm $\|\cdot\|_{m,p,D}$ and semi-norm $|\cdot|_{m,p,D}$. When $D=\Omega$, we omit the index $D$; and if $p=2$, we set
   $W^{m,p}(D)=H^m(D)$,
   $\|\cdot\|_{m,p,D}=\|\cdot\|_{m,D}$, and $|\cdot|_{m,p,D}=|\cdot|_{m,D}$. Notation $A\lesssim B$ implies that $A$ can be
  bounded by $B$ multiplied by a constant independent of the mesh size $h$.
  $A\sim B$ stands for $A\lesssim B$ and $B\lesssim A$.

\section{  A $C^1$ Petrov-Galerkin method}

   We consider the  following  convection-diffusion problem
\begin{eqnarray}\label{con_laws}
\begin{aligned}
     &- \nabla\cdot(\alpha \nabla u) +{\bf \beta}\cdot\nabla u + \gamma u=f, &\rm{in}\   \ \Omega=(a,b)\times(c,d),\\
     &u=0,  &\ \rm{on}\  \  \partial\Omega,
\end{aligned}
\end{eqnarray}
  where $\alpha\ge \alpha_0 > 0, \gamma-\frac{\nabla\cdot{\bf \beta}}{2}\ge 0, \gamma\ge 0$, $\alpha,{\bf \beta}=(\beta_1,\beta_2), \gamma\in L^{\infty}(\bar\Omega)$, and $f$ is real-valued function defined on $\bar\Omega$.
  Without loss of generality, we assume that $ \alpha,{\bf \beta},\gamma$ are all constants.
  The assumption is not essential since the analysis can be applied to that for variable coefficients as long as the above conditions are satisfied.

Let $a=x_{0}< x_{1}<\cdots<x_{M}=b$ and $c=y_{0}<
   y_{1}<\cdots<y_{N}=d$.
  For any positive integer $r$, we define
  $\bZ_{r}=\{1,2,\ldots, r\}$, and denote by
   ${\mathcal T}_h$ the rectangular partition of $\Omega$. That is,
\[
   {\mathcal T}_h=\{ \tau_{i,j}=[x_{i-1},
   x_{i}]\times[y_{j-1},y_{j}]:
   (i,j)\in\bZ_{M}\times\bZ_N\}.
\]
  For  any $\tau\in{\mathcal T}_h$, we denote by $h^x_\tau$, $h^y_\tau$
  the lengths of $x$- and $y$-directional edges of $\tau$, respectively.
$h$ is the maximal length
  of all edges, and $h_{\min}=\min_\tau (h^x_\tau,h^y_\tau).$
  We assume that the mesh ${\mathcal T}_h$ is {\it{quasi-uniform}} in the sense that there exists
  a constant $c$ such that $h\le c h_{\min}$. 
    %We denote by
%  $B_i^x=[x_{i-1},x_i]\times [0,2\pi], i\in\bZ_m$, the element-band
%  along $x$-direction and $B_j^y= [0,2\pi]\times[y_{j-1},y_j], j\in\bZ_n$,
%  the element-band along $y$-direction, respectively.

   We define the $C^1$ finite element space as follows:
\[
    V_h:=\{ v\in C^1(\Omega): \; v|_{\tau}\in \mathbb{Q}_k(x,y)=\mathbb {P}_k(x)\times \mathbb {P}_k(y), \tau\in{\mathcal T}_h\}^{},
\]
 where $\mathbb {P}_k$ denotes the space of
  polynomials of degree not more than $k$.
   Let
\[
    V_h^0:=\{ v\in V_h: \; v|_{\partial\Omega}=0\}.
^{}\]
  To design the Petrov-Galerkin method, we  define the test space $W_h$  as follows:
\begin{equation}\label{test:space}
    W_h:=\{ v\in L^2(\Omega): \; v|_{\tau}\in \mathbb{Q}_{k-2}(x,y)=\mathbb {P}_{k-2}(x)\times \mathbb {P}_{k-2}(y), \tau\in{\mathcal T}_h\}^{}.
\end{equation}
 % {\bf FEM}:  The finite element method is to find a $u_h\in V_h$ such that
% \begin{equation}\label{FEM}
%     a(u_h,v_h) :=  (\alpha u'_h,v_h')+(\beta u'_h+\gamma u_h, v_h)=(f,v_h),\ \ \forall v_h\in V_h.
% \end{equation}
Then the $C^1$ Petrov-Galerkin method for solving \eqref{con_laws} is:  Find a $u_h\in V^0_h$ such that
 \begin{equation}\label{PG}
      a(u_h,v_h):= (- \nabla\cdot(\alpha \nabla u_h)+ {\bf\beta}\cdot\nabla u_h + \gamma u_h, v_h)=(f,v_h),\ \ \forall v_h \in W_h.
 \end{equation}
  Here $(u,v)=\sum_{\tau\in {\mathcal T}_h}\int_{\tau} (uv)(x,y)dxdy.$

   We would like to point out that the  method we proposed here is only one the several ways to define a  $C^1$  Petrov-Galerkin method. Actually, different choices of the test space $W_h$
   may  lead to  different numerical schemes. For example, other than the $L^2$ test space, we can also choose the $C^0$ space as our test space,
   i.e., $W_h\subset C^0(\Omega)$ is some subspace of the continuous finite element space. Throughout this paper, we focus our analysis on
   the $L^2$ test space, i.e., $W_h\subset L^2(\Omega)$ is defined by \eqref{test:space}.

 \section{Existence and uniqueness}
    In this section,  we discuss the existence and   uniqueness of the $C^1$ Petrov-Galerkin method \eqref{PG}. We begin with some estimates  for the bilinear form $a(\cdot,\cdot)$ of the Petrov-Galerkin method, which plays important role
   in our later analysis.
\begin{lemma}\label{lemma-1}
  Given any $v\in V_h^0$,  suppose that $\varphi\in H^2(\Omega)$ is the solution the following dual problem:
\begin{eqnarray}\label{dual:problem}
   -\nabla\cdot(\alpha \nabla \varphi)-\beta\cdot\nabla \varphi+\gamma \varphi=v\ \ \rm{in}\   \ \Omega,\
     {\rm and}\  \varphi=0,  \ \rm{on}\  \  \partial\Omega.
\end{eqnarray}
  Denote by ${\mathcal I}_h\varphi \in \mathbb Q_1 \subseteqq W_h$ the bi-linear interpolation function of $\varphi$.
  Then
\begin{eqnarray}\label{eqq:1}
   &&\|v\|_0^2\lesssim h^4 (\|v_{xxy}\|^2_0+\|v_{xyy}\|_0^2)+|a(v,{\mathcal I}_h\varphi)|,\\\label{eqq:2}
  && \|\triangle v\|^2_0+\|v_{xxy}\|^2_0+\|v_{xyy}\|_0^2\lesssim |a(v,v_{xxyy})|+\|v\|_0^2.
\end{eqnarray}
\end{lemma}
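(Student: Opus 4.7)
The plan is to handle both estimates by exploiting two structural ingredients: the $C^1$-global smoothness of $v$, which kills jump contributions across interior edges during integration by parts (IBP), and the homogeneous Dirichlet condition $v|_{\partial\Omega}=0$, which forces tangential boundary derivatives to vanish, so in particular $v_{xx}|_{y=c,d}=0$ and $v_{yy}|_{x=a,b}=0$. Combining these boundary identities with one-dimensional Poincar\'e inequalities produces the backbone bounds
\[
\|v_{xx}\|_0\lesssim\|v_{xxy}\|_0,\qquad \|v_{yy}\|_0\lesssim\|v_{xyy}\|_0,
\]
hence $\|\triangle v\|_0\lesssim\|v_{xxy}\|_0+\|v_{xyy}\|_0$, and iterating with $\|v\|_0\lesssim\|\nabla v\|_0$ (Poincar\'e) together with $\|\nabla v\|_0^2=-(v,\triangle v)$ yields $\|v\|_0+\|\nabla v\|_0\lesssim\|v_{xxy}\|_0+\|v_{xyy}\|_0$. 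These estimates will be used repeatedly.

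For \eqref{eqq:1} I would carry out a standard duality argument. Elliptic regularity for the dual problem on the rectangle $\Omega$ gives $\|\varphi\|_{H^2}\lesssim\|v\|_0$. Testing the dual PDE against $v$ and integrating by parts --- boundary terms on $\partial\Omega$ vanish since $v=\varphi=0$ there, and interior-edge terms cancel since $v\in C^1$ makes $\partial_n v$ continuous --- gives $\|v\|_0^2=a(v,\varphi)=a(v,{\mathcal I}_h\varphi)+a(v,\eta)$ with $\eta:=\varphi-{\mathcal I}_h\varphi$. The classical bilinear interpolation estimate yields $\|\eta\|_0\lesssim h^2|\varphi|_{H^2}\lesssim h^2\|v\|_0$. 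Applying Cauchy--Schwarz term-by-term to $a(v,\eta)$ and invoking the backbone estimates to bound $\|v_{xx}\|_0$, $\|v_{yy}\|_0$, $\|\nabla v\|_0$, $\|v\|_0$ in terms of $\|v_{xxy}\|_0+\|v_{xyy}\|_0$, I get
\[
|a(v,\eta)|\lesssim h^2\bigl(\|v_{xxy}\|_0+\|v_{xyy}\|_0\bigr)\|v\|_0,
\]
and a routine Young inequality absorbs $\tfrac12\|v\|_0^2$ into the left, yielding \eqref{eqq:1}.

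For \eqref{eqq:2}, note first that $v_{xxyy}$ is piecewise of degree $\leq k-2$ in each variable, hence $v_{xxyy}\in W_h$ and $a(v,v_{xxyy})$ is meaningful. Expanding
\[
a(v,v_{xxyy})=-\alpha(\triangle v,v_{xxyy})+(\beta\cdot\nabla v,v_{xxyy})+\gamma(v,v_{xxyy}),
\]
I handle the principal term by IBP twice in $y$ of $\int v_{xx}v_{xxyy}\,dxdy$: the $\partial\Omega$ contributions vanish because $v_{xx}|_{y=c,d}=0$, and the interior horizontal-edge jumps cancel because both $v_{xx}$ and $v_{xxy}=\partial_x v_{xy}$ are continuous across such edges (the latter from continuity of $v_{xy}$, which follows from $v\in C^1$). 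This identifies $-\alpha(\triangle v,v_{xxyy})=\alpha(\|v_{xxy}\|_0^2+\|v_{xyy}\|_0^2)$. An analogous four-fold IBP on the zero-order term produces $\gamma(v,v_{xxyy})=\gamma\|v_{xy}\|_0^2\geq 0$. For the convective piece, one IBP gives $(v_x,v_{xxyy})=-(v_{xy},v_{xxy})$ (and symmetrically for $v_y$); the chained bound $\|v_{xy}\|_0^2\leq\|v_y\|_0\|v_{xxy}\|_0\leq C\|v\|_0^{1/2}\|v_{xyy}\|_0^{1/2}\|v_{xxy}\|_0$, obtained from IBP and the backbone Poincar\'e inequalities, combined with a scaled Young inequality lets me absorb this into $\varepsilon(\|v_{xxy}\|_0^2+\|v_{xyy}\|_0^2)+C_\varepsilon\|v\|_0^2$. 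Using $\|\triangle v\|_0^2\lesssim\|v_{xxy}\|_0^2+\|v_{xyy}\|_0^2$ then completes \eqref{eqq:2}.

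The main technical obstacle is the bookkeeping of interior-edge contributions during the repeated IBP for \eqref{eqq:2}: the cross-derivatives $v_{xx}$ and $v_{xxy}$ (and the vertical-edge analogues $v_{yy}$ and $v_{xyy}$) must be shown continuous across interior edges, which is \emph{not} immediate from $v\in C^1$ alone but follows by rewriting each such derivative as a tangential derivative of a quantity known to be continuous globally, e.g.\ $v_{xxy}=\partial_x v_{xy}$ with $v_{xy}$ continuous.
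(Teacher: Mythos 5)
Your proof is correct and follows essentially the same route as the paper: the duality argument with the bilinear interpolant splitting and the Poincar\'e-type backbone $\|v_{xx}\|_0\lesssim\|v_{xxy}\|_0$, $\|v_{yy}\|_0\lesssim\|v_{xyy}\|_0$ for \eqref{eqq:1}, and the exact integration-by-parts identities $-\alpha(\triangle v,v_{xxyy})=\alpha(\|v_{xxy}\|_0^2+\|v_{xyy}\|_0^2)$, $\gamma(v,v_{xxyy})=\gamma\|v_{xy}\|_0^2$ plus absorption of the convective term for \eqref{eqq:2}. The only (immaterial) deviations are that you control $\|v\|_1$ via Poincar\'e through the third derivatives where the paper uses the inverse inequality, and you bound $\|v_{xy}\|_0^2$ by a slightly different interpolation chain; your explicit treatment of the continuity of $v_{xx}$ and $v_{xxy}$ across interior edges is a point the paper leaves implicit.
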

\begin{proof}
  First, from   the dual problem \eqref{dual:problem} and the integration by parts, we have
\begin{eqnarray*}
  \|v\|_0^2&=&(v,-(\nabla\cdot(\alpha \nabla \varphi)-\beta\cdot\nabla \varphi+\gamma \varphi)
     =(-\nabla\cdot(\alpha \nabla v)+\beta\cdot\nabla v+\gamma v, \varphi)\\
     &=&(-\nabla\cdot(\alpha \nabla v)+\beta\cdot\nabla v+\gamma v, \varphi-{\mathcal I}_h\varphi+{\mathcal I}_h\varphi)\\
     &\lesssim& h^2(\|\triangle v\|_0+\|v\|_1)\|\varphi\|_2+|a(v,{\mathcal I}_h\varphi)|\lesssim h^2(\|\triangle v\|_0+h^{-1}\|v\|_0)\|v\|_0+|a(v,{\mathcal I}_h\varphi)|.
\end{eqnarray*}
 Here in the last step,
  we have used  the $H^2$ regularity $\|\varphi\|_2\lesssim \|v\|_0$ and
  the inverse inequality
\[
   \|v\|_1\lesssim h^{-1}\|v\|_0,\ \ \forall v\in V_h.
\]
 Consequently, if $h$ is sufficiently small,  then
\begin{equation}\label{eqq:4}
   \|v\|_0^2\lesssim h^4\|\triangle v\|^2_0+|a(v,{\mathcal I}_h\varphi)|.
\end{equation}
  On the other hand,
  noticing that for any function $v\in V^0_h$,
$\partial_x^i v, i\ge 1$  is continuous about $y$ satisfying
$\partial_x^i v (x,c)=\partial_x^iv_{}(x,d)=0$. Similarly,
$\partial_y^i v, i\ge 1$  is a continuous function about $x$ satisfying
 $\partial_y^iv(a,y)=\partial_y^iv(b,y)$.  Then
\[
   v_{xx}(x,y)=\int_{c}^y v_{xxy}(x,y) dy,\ \ v_{yy}(x,y)=\int_{a}^xv_{yyx}(x,y) dx.
\]
   By Poinc{a}r\'e inequality,
\[
    \| v_{xx}\|_0+\|v_{yy}\|_0\lesssim \|v_{xxy}\|_0+\|v_{yyx}\|_0.
\]
 %  Note that
%\begin{eqnarray*}
%   \int_{c}^d (u_x^2(b,y)-u_x^2(a,y)) dy&=& \int_{c}^d\int_{a}^b\frac{d}{dx}|u_x|^2 dxdy\\
%   &=&2 \int_{c}^d\int_{a}^b u_x u_{xx} dxdy\lesssim\|u\|_{1}\|u\|_{2}.
%\end{eqnarray*}
%  Similarly, there holds
%\begin{eqnarray*}
%  && \int_{c}^d (u_y^2(b,y)-u_y^2(a,y)) dy=2 \int_{c}^d\int_{a}^b u_y u_{yx} dxdy\lesssim\|u\|_{1}\|u\|_{2},\\
%  && \int_{c}^d ((u_xu_y)(b,y)-(u_xu_y)(a,y)) dy=\int_{a}^b\int_{c}^d\frac{d}{dx}(u_xu_y)(x,y)dxdy
%   \lesssim\|u\|_{1}\|u\|_{2}.
%\end{eqnarray*}
  %Following the same argument, we have
%\begin{eqnarray*}
%  \left| \int_{a}^b ((u_x^2+u_xu_y+u_y^2)(x,d)-(u_x^2+u_xu_y+u_y^2)(x,c)) dx\right| \lesssim\|u\|_{1}\|u\|_{2}.
%\end{eqnarray*}
%  Consequently,
%\begin{equation}\label{eq:2}
%   \left|\int_{\partial \Omega}\frac{\partial u_h}{\partial \bf n}
%  ({\bf \beta}\nabla u_h) ds-\frac{1}2\int_{\partial\Omega}|\nabla u_h|^2{\bf \beta}\cdot {\bf n} ds\right|\lesssim \|u\|_{1}\|u\|_{2}.
%\end{equation}
    Therefore,
 \[
     \|\triangle v\|_0\le \|v_{xx}\|_0+\|v_{yy}\|_0
     \lesssim \|v_{xxy}\|_0+\|v_{yyx}\|_0,
 \]
   which yields (together with \eqref{eqq:4}) the desired result \eqref{eqq:1}.

 We next consider \eqref{eqq:2}.  By integration by parts, the inverse inequality and \eqref{eqq:1}, there holds
 for any positive constant $\epsilon$,
 \begin{eqnarray*}
   (v_{xy},v_{xy})=-(v_x,v_{xyy})
   &\le &  \frac{1}{4\epsilon}\|v\|_1^2+\epsilon\|v_{xyy}\|_0^2\\
   &\le& C \|v\|_0^2+ {\epsilon} (\|v_{xyy}\|^2_0+\|v_{xxy}\|^2_0+\|\triangle v\|_0^2).
\end{eqnarray*}
  Consequently,
\begin{eqnarray*}\label{eqq:5}
\begin{split}
   |({\bf \beta}\cdot\nabla v,v_{xxyy})|&=|( v_{xy}, { \beta_1}v_{xxy}+{ \beta_2}v_{yyx})|
   \le \frac{c_0}{\alpha}\|v_{xy}\|_0^2+\frac{\alpha}{4}(\|v_{xxy}\|^2+\|v_{yyx}\|_0^2) &\\
   &\le  (\frac{c_0\epsilon}{\alpha}+\frac{\alpha}{4}) (\|v_{xyy}\|^2_0+\|v_{xxy}\|^2_0)+C\|v\|_0^2+\frac{c_0\epsilon}{\alpha}\|\triangle v \|_0^2,&
\end{split}
\end{eqnarray*}
where $c_0=\max(\beta_1^2,\beta_2^2)$.
  Recalling the definition of $a(\cdot,\cdot)$ and  using the integration by parts again, we derive that
\begin{align*}\label{eq:000}
   a(v,v_{xxyy})&=\alpha(\|v_{xxy}\|_0^2+\|v_{xyy}\|_0^2)+\gamma \|v_{xy}\|_0^2+({\bf \beta}\cdot\nabla v,v_{xxyy})\\\nonumber
       &\ge (\frac{3\alpha}{4}-\frac{c_0\epsilon}{\alpha}) (\|v_{xxy}\|_0^2+\|v_{yyx}\|_0^2)-\frac{c_0\epsilon}{\alpha}\|\triangle v \|_0^2+\gamma \|v_{xy}\|^2_0-C\|v\|_0^2\\\nonumber
       &\ge  (\frac{3\alpha}{4}-C_0\epsilon) (\|v_{xxy}\|_0^2+\|v_{yyx}\|_0^2)+\gamma \|v_{xy}\|^2_0-C\|v\|_0^2.
\end{align*}

   By choosing a small $\epsilon$,  we obtain \eqref{eqq:2}  directly.
  This finishes our proof. $\Box$
\end{proof}

  Now we are ready to prove the existence and uniqueness results for the $C^1$ Petrov-Galerkin method.
\begin{theorem}\label{theo:4}
    The  $C^1$ Petrov-Galerkin method \eqref{PG}
    has one and only one solution, provided that the mesh size is sufficiently small.
\end{theorem}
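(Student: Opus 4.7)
The strategy is to observe that, since the $C^1$-conforming trial space $V_h^0$ and the discontinuous test space $W_h$ are finite-dimensional with matching dimensions on a rectangular mesh, the discrete problem \eqref{PG} is equivalent to a square linear algebraic system. Therefore existence and uniqueness are equivalent, and it suffices to prove uniqueness: if $u_h \in V_h^0$ satisfies $a(u_h,v_h)=0$ for every $v_h\in W_h$, then $u_h\equiv 0$.

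To this end the plan is to feed two carefully chosen test functions from $W_h$ into Lemma~\ref{lemma-1}. First, since $\mathbb{Q}_1 \subseteq \mathbb{Q}_{k-2}$ for $k\ge 3$, the bilinear interpolant $\mathcal{I}_h\varphi$ of the dual solution $\varphi$ lies in $W_h$, so the Galerkin orthogonality gives $a(u_h,\mathcal{I}_h\varphi)=0$. Inserting this into \eqref{eqq:1} immediately yields
\begin{equation*}
   \|u_h\|_0^2 \;\lesssim\; h^4\bigl(\|\partial_{xxy} u_h\|_0^2 + \|\partial_{xyy} u_h\|_0^2\bigr).
\end{equation*}
Next, because $u_h|_\tau\in\mathbb{Q}_k(x,y)$, we have $\partial_{xxyy} u_h|_\tau\in \mathbb{Q}_{k-2}(x,y)$, and no continuity is required across element interfaces, so $\partial_{xxyy} u_h \in W_h$. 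Galerkin orthogonality again gives $a(u_h,\partial_{xxyy} u_h)=0$, which turns \eqref{eqq:2} into
\begin{equation*}
   \|\partial_{xxy} u_h\|_0^2 + \|\partial_{xyy} u_h\|_0^2 \;\lesssim\; \|u_h\|_0^2.
\end{equation*}
Chaining the two estimates produces $\|u_h\|_0^2 \lesssim h^4\|u_h\|_0^2$, whence $u_h\equiv 0$ as soon as $h$ is small enough. Combined with the dimension count, this completes the proof.

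The main technical checkpoints are rather light: one must verify that both test functions $\mathcal{I}_h\varphi$ and $\partial_{xxyy}u_h$ truly lie in $W_h$ (the former needing $k\ge 3$, the latter being automatic from the tensor-product structure and the lack of imposed continuity), and that $\dim V_h^0 = \dim W_h$ on the rectangular mesh so that surjectivity follows from injectivity. The real work has already been absorbed into Lemma~\ref{lemma-1}; the uniqueness argument itself is a clean two-line combination of \eqref{eqq:1} and \eqref{eqq:2} together with a smallness assumption on $h$, and I do not anticipate any further obstacle beyond these bookkeeping checks.
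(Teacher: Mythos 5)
Your proof is correct and follows essentially the same route as the paper: reduce to uniqueness for the homogeneous problem, observe that both $\mathcal{I}_h\varphi\in\mathbb{Q}_1\subseteq W_h$ and $\partial^4_{xxyy}u_h\in W_h$ are admissible test functions, and chain the two estimates of Lemma~\ref{lemma-1} to get $\|u_h\|_0^2\lesssim h^4\|u_h\|_0^2$, forcing $u_h\equiv 0$ for small $h$. Your explicit remark that $\dim V_h^0=\dim W_h=MN(k-1)^2$ (so injectivity yields surjectivity) is a useful bookkeeping step that the paper leaves implicit.
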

\begin{proof}
   We shall prove that the homogeneous problem  has a
unique zero solution.  To this end, we assume that $f=0$ and prove the numerical scheme \eqref{PG}
admits a solution $u_h=0$.

  Noticing that $ \partial^4_{xxyy}u_h \in W_h, {\mathcal I}_h\varphi\in W_h$ for any function $\varphi$,  we have
\[
   a(u_h, \partial^4_{xxyy} u_h)=0,\ \ a(u_h, {\mathcal I}_h\varphi)=0.
\]
 Then from \eqref{eqq:1}-\eqref{eqq:2}, we have $\|u_h\|_0=0$ and thus
\[
    u_h\equiv 0.
\]
  This finishes the proof. $\Box$

\end{proof}

  \section{A specially constructed Jacobi projection}

     In this section, we define a $C^1$ Jacobi projection of the exact solution and study  its
      approximation property, which is essential for the establishment of the
     superconvergence results for the numerical solution $u_h$, especially the discovery of superconvergence points.

   We begin with some preliminaries. First,  we  introduce  the four Hermite interponant basis functions on the interval $[-1,1]$, which are given by
\begin{eqnarray*}
  && \psi_{-1}(s)=\frac{1}{4}(s+2)(1-s)^2,\ \ \psi_{1}(s)=\frac{1}{4}(2-s)(1+s)^2,\\
 && \chi_{-1}(s)=\frac{1}{4}(s+1)(1-s)^2,\ \ \chi_{1}(s)=\frac{1}{4}(s-1)(1+s)^2.
\end{eqnarray*}

  Second, we denote by $J_n^{r,l}(s),\  r,l>-1$,
   the standard Jacobi polynomials of degree $n$ over $(-1,1)$, which
     are orthogonal with respect to the Jacobi weight function
     $\omega_{r,l}(s):=(1-s)^{r}(1+s)^{l}$. That is,
 \[
     \int_{-1}^1J_n^{r,l}(s)J_m^{r,l}(s)\omega_{r,l}(s)ds
     =\kappa_n^{r,l}\delta_{mn},
 \]
 where $\delta$ denotes the Kronecker symbol and $ \kappa_n^{r,l}=\|J_n^{r,l}\|^2_{\omega_{r,l}}$. 
Note that when $r=l=0$, the Jacobi polynomial $J_n^{r,l}$ is reduced to the standard Legendre polynomial. That is
 $J_{n}^{0,0}(s)=L_{n}(s)$ with
  $L_{n}(s)$  being the  Legendre polynomial of degree $n$ over $[-1,1]$.
 We extend the definition of the classical Jacobi polynomials to the case where
     both parameters $r,l \le -1$
 \begin{eqnarray}\label{Jacobi:2}
     J_n^{r,l}(s):=(1-s)^{-r}(1+s)^{-l}J_{n+r+l}^{-r,-l}(s),\ \ r,l\le -1.
 \end{eqnarray}
  By taking $r=l=-2$ in \eqref{Jacobi:2}, we get a sequence of Jacobi polynomials $\{J_n^{-2,-2}\}_{n=4}^{\infty}$ with
\begin{eqnarray}\label{Jacobi:1}
     J_n^{-2,-2}(s):=(1-s)^{2}(1+s)^{2}J_{n-4}^{2,2}(s), \ \ \forall n\ge 4.
 \end{eqnarray}
  Apparently,  there holds
 \begin{equation}\label{eq:3}
     J_n^{-2,-2}(\pm 1)=0,\ \ \partial_sJ_n^{-2,-2}(\pm 1)=0.
 \end{equation}
   Denoting
\[
   J_0^{-2,-2}(s)=\psi_{-1}(s),\ \ J_1^{-2,-2}(s)=\psi_{1}(s),\ \ J_2^{-2,-2}(s)=\chi_{-1}(s),\ \ J_3^{-2,-2}(s)=\chi_{1}(s),
\]
  then $\{J_n^{-2,-2}\}_{n=0}^{\infty}$ constitutes the basis function of $C^1$ over $[-1,1]$. We also refer to \cite{shen2011} for more detailed information and 
  discussions about the Jacobi polynomials. 
 % In other words, any function $\hat \varphi(s) \in C^1([-1,1])$ has the following Jacobi expansion:
%\[
%   \hat \varphi=\hat\varphi(-1)J^{-2,-2}_0+\hat\varphi(1)J^{-2,-2}_1+\partial_s\hat\varphi(-1)J^{-2,-2}_2+\partial_s\hat\varphi(1)J^{-2,-2}_3
%   +\sum_{m=4}^{\infty}\mu_mJ_{m}^{-2,-2}.
%\]

 Third, we denote by $\phi_{n+1}$ for $n\ge 1$  the Lobatto polynomial of degree $n+1$ over $[-1,1]$, which is defined by
\begin{equation}\label{lob}
   \phi_{n+1}(s)=\int_{-1}^s L_n(s) ds=\frac{1}{2n+1}(L_{n+1}-L_{n-1})=\frac{1}{n(n+1)}(s^2-1)L'_{n}(s).
\end{equation}
    The above
   Jacobi and Lobatto polynomials will be frequently used  in our  later superconvergence analysis.

   Now we are ready to present the truncated Jacobi projection.
   Given any function $u\in C^1(\Omega)$, suppose $u(x,y)$ has the following Jacobi expansion in each element $\tau_{ij},(i,j)\in\bZ_M\times\bZ_N$
  \begin{eqnarray}\label{uu}
      u(x,y)|_{\tau_{ij}}=\sum_{p=0}^{\infty}\sum_{q=0}^{\infty} u_{pq} J_{i,p}^{-2,-2}(x) J_{j,q}^{-2,-2}(y),
  \end{eqnarray}
    where $ J_{i,p}^{-2,-2}(x)=J_p^{-2,-2}(\frac{2x-x_i-x_{i-1}}{h_i})=J_p^{-2,-2}(s),\ \ s\in [-1,1]$, is the Jacobi polynomial of degree $p$ over $(x_{i-1},x_i)$, and $u_{pq}$ are some coefficients to be determined.
    Then the truncated Jacobi projection $u_I\in V_h$ of $u$ is defined as follows:
\begin{equation}\label{eq: interpolation}
    u_I(x,y)|_{\tau_{ij}}:=\sum\limits_{p=0}^{k}\sum\limits_{q=0}^{k} u_{pq}   J_{i,p}^{-2,-2}(x) J_{j,q}^{-2,-2}(y).
\end{equation}
  Note that when $k=3$, the truncated Jacobi projection $u_I$ is reduced to the Hermite interpolation of $u$.
% \begin{equation}\label{eq: interpolation}
%      u_I(x,y)|_{\tau_{ij}}:=\left\{\begin{array}{lll}
%      H_3u(x,y)+\sum\limits_{p=4}^{k}\sum\limits_{q=4}^{k} u_{pq}   J_{i,p}^{-2,-2}(x) J_{j,q}^{-2,-2}(y),& \text{if} &\ k\ge 4,\\
%      H_3u(x,y),& \text{if} &\ k=3.
%      \end{array}
%  \right.
%  \end{equation}

  For  all $\tau=\tau_{ij}$, a direct calculation yields
 \begin{align}\label{error:decomposition}
    (u-u_I)|_{\tau}=\sum_{p=k+1}^{\infty}\sum_{q=k+1}^{\infty}u_{pq}J_{i,p}^{-2,-2}(x) J_{j,q}^{-2,-2}(y)
    =(E^xu+E^yu-E^xE^yu),
 \end{align}
    where 
\begin{eqnarray}\label{Ex}
  && E^xu(x,y)|_{\tau_{ij}}=\sum_{p=k+1}^{\infty}\sum_{q=0}^{\infty}u_{pq} J_{i,p}^{-2,-2}(x)  J_{j,q}^{-2,-2}(y),\\
   && E^yu(x,y)|_{\tau_{ij}}=\sum_{p=0}^{\infty}\sum_{q=k+1}^{\infty}u_{pq}J_{i,p}^{-2,-2}(x)  J_{j,q}^{-2,-2}(y),\\\label{Ey}
   && E^xE^yu(x,y)|_{\tau_{ij}}=\sum_{p=k+1}^{\infty}\sum_{q=k+1}^{\infty}u_{pq}J_{i,p}^{-2,-2}(x)  J_{j,q}^{-2,-2}(y).
\end{eqnarray}

Note that $E^xu$ is actually the one dimensional residual functions  along the $x$-direction while the other variable $y$ is fixed. 
Similar for $E^yu, E^xE^yu$.

We have the following properties for the residual functions $E^xu, E^yu$ and $E^xE^yu$.
\begin{lemma}\label{lemma:1}
 Assume that $u\in W^{l,\infty}(\Omega)\cap C^1(\Omega)$  with $0<l\le k+1$,  and $u_I$ is the truncated Jacobi projection of $u$ defined by \eqref{eq: interpolation}.
 Let $u-u_I=E^xu+E^yu-E^xE^yu$ with $E^xu,E^yu, E^xE^yu$ given by \eqref{Ex}-\eqref{Ey}.
 There holds for $m=\infty, 2$ and $p=0,1$ the following results:
\begin{enumerate}
 \item [1.]The function $E^xu(\cdot,y)\in C^1(\cdot, y)$ is continuous about $y$ and
 \begin{eqnarray*}\label{eq:4}
  &&\partial_x^pE^xu(x_i,y)=0,\ \partial_{xx}^2E^xu\bot {\mathbb P}_{k-2}(x),\  \partial_y^nE^xu(x,y)=E^x(\partial_y^nu), \forall n,\\\label{eq:5}
  &&\|E^xu\|_{0,m}+h\|\partial_xE^xu\|_{0,m}+h^2\|\partial^2_{xx}E^xu\|_{0,m}\lesssim h^l\|u\|_{l,m}.
\end{eqnarray*}
 \item [2.] The function $E^yu(x,\cdot)\in C^1(x,\cdot)$ is continuous about $x$ and
\begin{eqnarray*}\label{eq:6}
  &&\partial_y^pE^yu(x,y_j)=0,\ \partial_{yy}^2E^yu\bot {\mathbb P}_{k-2}(y),\ \partial_x^nE^yu(x,y)=E^y(\partial_x^nu),\ \forall n,\\\label{eq:7}
  && \|E^yu\|_{0,m}+h\|\partial_yE^yu\|_{0,m}+h^2\|\partial_{yy}^2E^yu\|_{0,m}\lesssim h^l\|u\|_{l,m}.
\end{eqnarray*}
\item[3.]  The function $E^xE^yu$ is of high order, i.e., there holds for $r,l \le k+1$, 
\begin{align*}\label{eq:8}
   \|E^xE^yu\|_{0,m}+h\|E^xE^yu\|_{1,m}+h^2\|E^xE^yu\|_{2,m}\lesssim h^{r+l}\|u\|_{r+l,m}. 
\end{align*}
\end{enumerate}
\end{lemma}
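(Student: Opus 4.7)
The plan is to exploit the tensor-product structure of the Jacobi expansion and reduce every claim to a one-dimensional truncated Jacobi projection estimate applied slice-by-slice. I would introduce the 1D truncation $\Pi_k^x$ acting purely in the $x$-variable that keeps only terms with $p\leq k$; then $E^xu = u - \Pi_k^x u$, and $\Pi_k^x$ commutes with $y$-derivatives and $y$-evaluation. Symmetrically for $E^y$; combining, $E^xE^yu = E^x(E^yu) = E^y(E^xu)$, which is the key identity for Part 3.

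For Part 1, the vanishing $\partial_x^p E^xu(x_i,y)=0$ for $p=0,1$ is immediate from \eqref{eq:3} because the sum defining $E^xu$ starts at $p=k+1\geq 4$, and each $J_{i,p}^{-2,-2}$ together with its first derivative vanishes at both mesh nodes $x_{i-1}, x_i$. For the orthogonality $\partial_{xx}^2 E^xu\perp \mathbb{P}_{k-2}(x)$, I would test against an arbitrary $\phi\in\mathbb{P}_{k-2}(x)$ on each $\tau$, integrate by parts twice (the two boundary terms vanish by the vanishing just proved), and then invoke \eqref{Jacobi:1}: after rescaling, the remaining integrand becomes $(1-s)^2(1+s)^2 J_{p-4}^{2,2}(s)\,\phi''(s)$, and $\phi''$ has degree $\leq k-4 \leq p-5$ for every $p\geq k+1$, so the integral vanishes by the Jacobi orthogonality with weight $\omega_{2,2}$. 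The identity $\partial_y^n E^xu = E^x(\partial_y^n u)$ is a term-by-term observation, and continuity of $E^xu$ in $y$ across $y=y_j$ follows because $I-\Pi_k^x$ commutes with evaluation in $y$ while $u\in C^1$.

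The norm bounds in Part 1, and Part 2 which is symmetric, reduce to the 1D slice estimate
\[
 \|E^xu(\cdot,y)\|_{L^m_x(\tau)}+h\|\partial_x E^xu(\cdot,y)\|_{L^m_x(\tau)}+h^2\|\partial_{xx}^2 E^xu(\cdot,y)\|_{L^m_x(\tau)}\lesssim h^l\|\partial_x^l u(\cdot,y)\|_{L^m_x(\tau)},
\]
which I then integrate (for $m=2$) or take the supremum (for $m=\infty$) in $y$. For Part 3, I would write $E^xE^yu = E^x(E^yu)$ and apply the 1D estimate in $x$ to obtain a factor $h^{r-s}$ acting on $\|\partial_x^r\partial_y^t E^yu\|_{L^m}$ for each mixed derivative $\partial_x^s\partial_y^t$ with $s,t\leq 2$; then commute $\partial_x^r$ past $E^y$ and apply the 1D estimate in $y$ to $E^y(\partial_x^r u)$ to pick up the second factor $h^{l-t}$. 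Assembling these cases yields the stated $H^0, H^1, H^2$ inequalities, all three driven by the product $h^{r+l}$.

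The main obstacle is verifying the underlying one-dimensional truncated Jacobi projection estimate in $W^{j,m}$ for $m=\infty$ and $j=0,1,2$. The $L^2$ version is straightforward from Parseval in the $\omega_{-2,-2}$-weighted inner product (combined with the vanishing at $\pm 1$ for the derivative bounds), but the $L^\infty$ version requires uniform estimates on $J_p^{-2,-2}$ and its first two derivatives on $[-1,1]$ together with the $h$-scaling. Once this 1D machinery is in place, the 2D results fall out by slice-wise application and Fubini, with only careful $h$-bookkeeping needed for the mixed-derivative terms in Part 3.
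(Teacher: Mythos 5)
Your proposal is correct and follows the route the paper itself intends: the paper omits the proof of this lemma and refers to the one-dimensional truncated Jacobi projection results of \cite{cao-jia-zhang}, and your slice-wise reduction to those 1D estimates, together with the term-by-term verification of the vanishing, orthogonality ($J_{p-4}^{2,2}\perp_{\omega_{2,2}}\mathbb{P}_{p-5}$ against $\phi''$ after two integrations by parts with vanishing boundary terms), and commutation properties, is exactly that argument. The only substantive ingredient you defer --- the 1D $W^{j,m}$ estimates for $j=0,1,2$ and $m=2,\infty$ --- is precisely what the cited reference supplies, so there is no gap.
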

  Here we omit the proof  and refer to \cite{cao-jia-zhang} for more detailed discussions  about the one dimensional truncated Jacobi projection.

We next study the approximation and superconvergence propeties of $u_I$. To this end, we first introduce some special points and lines on the whole domain and then prove that $u_I$ is superconvergent at this special points and lines.

Let $R_{p}, p\in \bZ_{k-3}$ be
   the $k-3$  zeros of $J_{k+1}^{-2,-2}(s)$ except the point $s=-1,1$, and
   $l_{p}, p\in\bZ_{k}$ the  $k$
  Gauss-Lobatto points, i.e., $l_{p}, p\in\bZ_{k}$ are zeros of $\partial_sJ_{k+1}^{-2,-2}(s)$,
   and $G_p, p\in\bZ_{k-1}$ the Gauss points of degree $k-1$ in $[-1,1]$ (i.e., the zeros of $L_{k-1}$), respectively.
  Then for all $\tau=\tau_{i,j}\in {\mathcal T}_h, (i,j)\in\bZ_M\times\bZ_N$, denote
  $h_i^x=x_i-x_{i-1}, h_j^y=y_j-y_{j-1}$, and
\[
   {\mathcal R}_{\tau}=\{P: P=(R_{p}^{\tau,x},R_{q}^{\tau,y}), p,q\in\bZ_{k-3}\},\ \ {\mathcal R}=\bigcup_{\tau\in {\mathcal T}_h}{\mathcal R}_{\tau},
\]
   where
\[
  R_{p}^{\tau,x}=\frac{1}{2}(x_{i-\frac 12}+x_{i+\frac
  12}+h_i^xR_{p}),\ \  R_{p}^{\tau,y}=\frac{1}{2}(y_{j-\frac 12}+y_{j+\frac
  12}+h_j^yR_{p}).
\]
  Denote by
\begin{eqnarray*}
  &&{\mathcal E}_x^l=\{(x,y): x=l_{p}^{\tau,x}, y\in [c,d], {p\in\bZ_{k}},\tau\in
  {\mathcal T}_h\},\\
  &&{\mathcal E}_y^l=\{(x,y): y=l_{p}^{\tau,y}, x\in [a,b], {p\in\bZ_{k}},\tau\in
  {\mathcal T}_h\}
\end{eqnarray*}
  the set of vertical and horizontal edges of all interior Lobatto points
along the $x$-direction and the $y$-direction, respectively. Here
\[ l_{p}^{\tau,x}=\frac{1}{2}(x_{i-\frac 12}+x_{i+\frac
  12}+h_i^xl_{p}),\ \  l_{p}^{\tau,y}=\frac{1}{2}(y_{j-\frac 12}+y_{j+\frac
  12}+h_j^yl_{p}),\ \ \forall\tau=\tau_{ij}.
\]
Similarly,  the interior Gauss lines along the $x,y$-direction  are defined  as
\begin{eqnarray*}
  &&{\mathcal E}_x^g=\{(x,y): x=g_{p}^{\tau,x}, y\in [c,d], p\in\bZ_{k-1},\tau\in
  {\mathcal T}_h\},\\
  &&{\mathcal E}_y^g=\{(x,y): y=g_{p}^{\tau,y}, x\in [a,b], p\in\bZ_{k-1},\tau\in
  {\mathcal T}_h\},
\end{eqnarray*}
where $(g_{p}^{\tau,x},g_{q}^{\tau,y}), (p,q)\in (k-1)\times(k-1)$ denotes the $(k-1)^2$ Gauss points in $\tau$.

  We have the following approximation properties for the Jacobi projection $u_I$.
 \begin{proposition}\label{proposition:1}
 Assume that $u\in W^{l,\infty}(\Omega), l\ge 2$ is the solution of \eqref{con_laws}, and $u_I$ is the truncated Jacobi  projection of $u$ defined by \eqref{eq: interpolation}. The following orthogonality and approximation properties hold true.
  \begin{enumerate}
   \item [1.] Optimal error estimates:
 \begin{eqnarray*}\label{optimaluI}
    \|u-u_I\|_{0,m}\lesssim h^{r}\|u\|_{r,m},\ \ r\le \min(k+1,l), \ m=2,\infty. 
 \end{eqnarray*}
 \item [2.]  Exactly the same at the mesh nodes for both the function value and the first-order derivative:
\begin{eqnarray*}
  (u-u_I)(x_i,y_j)=0,\ \  \nabla(u-u_I)(x_i,y_j)=0. 
\end{eqnarray*}
  \item [3.]  Superconvergence of function value approximation on  roots of  $J_{i,k+1}^{-2,-2}(x)J_{j,k+1}^{-2,-2}(y)$:
  \begin{eqnarray*}\label{approx:super12}
  |(u-u_I) (P) | \lesssim   h^{r}|u|_{r,\infty},\ \ \forall P\in{\mathcal R},\  r\le \min(k+2,l). 
\end {eqnarray*}
 \item [4.] 
 Superconvergence of the first-order derivative on  Gauss-Lobatto lines, i.e., for $r\le \min(k+2,l)$, 
 \begin{eqnarray*}\label{approx:super3}
 |\partial_x(u-u_I) (P_1)| + |\partial_y(u-u_I) (Q_1)| \lesssim h^{r-1}|u|_{r,\infty}, 
\end{eqnarray*}
where $P_1\in {\mathcal E}_x^l, Q_1\in {\mathcal E}_y^l$ denotes the Lobatto lines along the $x$ and $y$ directions, respectively.
\item [5.] Superconvergence of  the second-order derivative on Gauss lines and Lobatto points, i.e., there holds for $r\le \min(k+2,l)$, 
\begin{eqnarray*}\label{approx:super4}
   |\partial_{xx}^2(u-u_I)(P_2)|+|\partial_{yy}^2(u-u_I)(Q_2)| +|\partial_{xy}^2(u-u_I)(P_3)| \lesssim h^{r-2}|u|_{r,\infty}, 
\end{eqnarray*}
  where $P_2\in  {\mathcal E}_x^g, Q_2\in {\mathcal E}_y^g $ denotes the Gauss lines along the $x$ and $y$ direction, respectively,
  and $P_3\in{\mathcal L}$ with ${\mathcal L}$ the set of  Lobatto points on the whole domain, i.e., ${\mathcal L}=\{(l_{p}^{\tau,x},l_{q}^{\tau,y}): \tau\in{\mathcal T}_h, (p,q)\in k\times k\}$.
\end{enumerate}
 \end{proposition}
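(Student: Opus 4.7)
The unifying strategy is to exploit the additive decomposition $u-u_I=E^xu+E^yu-E^xE^yu$ from (4.8) together with the two Jacobi-derivative identities
\begin{equation*}
\partial_s J_{k+1}^{-2,-2}(s) = \tfrac{k-2}{2}\, J_k^{-1,-1}(s), \qquad \partial_s^2 J_{k+1}^{-2,-2}(s) = \tfrac{(k-2)(k-1)}{4}\, L_{k-1}(s),
\end{equation*}
obtained by iterating the classical formula $\partial_s J_n^{\alpha,\beta} = \tfrac12(n+\alpha+\beta+1)\,J_{n-1}^{\alpha+1,\beta+1}$. These identities show that the $k$ Lobatto points $l_p$ are precisely the zeros of $\partial_s J_{k+1}^{-2,-2}$, while the $k-1$ Gauss points $g_p$ are the zeros of $\partial_s^2 J_{k+1}^{-2,-2}$. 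Consequently, the leading $p=k+1$ Jacobi mode in $E^xu$, or in its first or second $x$-derivative, is annihilated at roots of $J_{k+1}^{-2,-2}$, at Lobatto lines, and at Gauss lines respectively, each case contributing one extra power of $h$. The symmetric statement holds for $E^yu$ in the $y$-variable.

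The first two claims are immediate. Claim 1 follows from the triangle inequality applied to (4.8) together with the $L^m$ bounds of Lemma 4.1. Claim 2 uses $\partial_x^p E^xu(x_i,y)=0$ for $p=0,1$ and the symmetric vanishing of $E^yu$ at $y=y_j$; the cross term $E^xE^yu$ inherits both zeros, and the commutation identity $\partial_x E^yu=E^y(\partial_x u)$ extends the nodal vanishing to first-order derivatives.

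For claim 3 at $P=(R_p^{\tau,x},R_q^{\tau,y})$, the identity $J_{i,k+1}^{-2,-2}(R_p^{\tau,x})=0$ kills the $p=k+1$ term of $E^xu(\cdot,y)$, leaving a one-dimensional residual that starts at $p=k+2$; the 1D Jacobi-coefficient decay estimate of \cite{cao-jia-zhang} then gives $|E^xu(P)|\lesssim h^{k+2}|u|_{k+2,\infty}$, with the symmetric bound for $E^yu$, and $E^xE^yu$ is already $O(h^{2k+2})$ by Lemma 4.1. Claim 4 follows by differentiating (4.8) in $x$: on the Lobatto line $x=l_p^{\tau,x}$, the vanishing $\partial_s J_{k+1}^{-2,-2}(l_p)=0$ forces $\partial_x E^xu=O(h^{k+1})$; the term $\partial_x E^yu=E^y(\partial_x u)$ obeys the same $h^{k+1}$ bound via Lemma 4.1 applied to $\partial_x u$ (no special-point argument needed); and $\partial_x E^xE^yu$ is of higher order. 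The $\partial_y$ statement on $\mathcal E_y^l$ is symmetric. Claim 5 is analogous: on the Gauss lines the identity $\partial_s^2 J_{k+1}^{-2,-2}\propto L_{k-1}$ makes the leading mode of $\partial_{xx}^2 E^xu$ vanish, giving $h^k$, while the remaining pieces are handled by Lemma 4.1 applied to $\partial_{xx}^2 u$. For the mixed derivative at a Lobatto point $(l_p^{\tau,x},l_q^{\tau,y})$, each of the three summands in the decomposition contains at least one factor $\partial_s J_{k+1}^{-2,-2}$ evaluated at a Lobatto coordinate and therefore vanishes, again producing~$h^k$.

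The main technical obstacle is the careful bookkeeping of $L^\infty$ Jacobi-coefficient decay rates with the correct minimal regularity exponent in each sub-claim: gaining one extra order of $h$ beyond the optimal bound costs one additional derivative of $u$, and this derivative must be available on all three summands of the decomposition simultaneously. The mixed-derivative estimate in claim 5 is the most delicate, since each of the three pieces must be shown to match the same superconvergence rate at the Lobatto points.
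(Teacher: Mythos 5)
Your proposal is correct and follows essentially the same route as the paper: decompose $u-u_I=E^xu+E^yu-E^xE^yu$, invoke the one-dimensional superconvergence of $E^xu$, $\partial_xE^xu$, $\partial_{xx}E^xu$ at the Jacobi roots, Lobatto points, and Gauss points respectively (which the paper simply cites from \cite{cao-jia-zhang}, and which you re-derive via the derivative identities for $J_{k+1}^{-2,-2}$), and absorb the remaining terms with Lemma \ref{lemma:1}. The only difference is that you make explicit the mechanism the paper outsources to the 1D reference, and your commutation-identity treatment of the cross terms (e.g.\ $\partial_xE^yu=E^y(\partial_xu)$) matches the intended argument.
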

 \begin{proof}  For any fixed $y$, there holds  for any $r\le \min(k+2,l)$ (see   \cite{cao-jia-zhang} )
 \[
    |E^xu(R_{m}^{\tau,x}, y)|+h|\partial_xE^xu(l_{n}^{\tau,x},y)| +h^2|\partial_{xx}E^xu(g_{p}^{\tau,x},y)| \lesssim h^{r}\|u\|_{r,\infty}. 
 \]
  Following the same arguments, we have 
 \[
    |E^yu(x,R_{m}^{\tau,y})|+h|\partial_yE^yu(x,l_{n}^{\tau,y})| +h^2|\partial_{yy}E^yu(x,g_{p}^{\tau,y})| \lesssim h^{r}\|u\|_{r,\infty}. 
 \]  
   Then the desired results follow from the error equation \eqref{error:decomposition} and 
    the estimates of $E^xu, E^yu, E^xE^yu$ in Lemma \ref{lemma:1}.  $\Box$
 % First,  subtracting   \eqref{eq: interpolation} from \eqref{uu} yields that
%\begin{equation}\label{uminusuI}
%    \partial_x^m (u-u_I)(x)=\sum_{n=k+1}^{\infty} u_n  \partial_x^m \hat J_{n}^{-2,-2}(x),\ \ m=0,1,2.
%\end{equation}
%   Using  \eqref {orth:2} and  the orthogonal properties of  Legendre polynomials, we derive
%\begin{equation} \label{ortho:1}
%    \int_{\tau_i}(u-u_I)''v_h=0,\ \ \forall v_h\in P_{k-2}(\tau_i).
%\end{equation}
%  Noticing  that
%\[
%     \partial_x^m \hat J_{n}^{-2,-2}(x_i)= \partial_x^m \hat J_{n}^{-2,-2}(x_{i-1})=0,\ \ m=0,1,
%\]
%  we easily get
%  \begin{eqnarray}
%    \partial_x^m u_I (x_i)=u(x_i),\ \ \partial_x^m  u_I (x_{i-1})=u(x_{i-1}),\ \  m=0,1.
%  \end{eqnarray}
%   Consequently,   a simple  integration by parts and  \eqref{ortho:1} lead to
%   \begin{eqnarray}
%      \int_{\tau_i}(u-u_I)'v'=0,\ \  \int_{\tau_i}(u-u_I)v'' =0,\ \ \forall v\in P_{k-2}(\tau_i).
% \end{eqnarray}
%  That is,
%\[
%    (u-u_I)\bot P_{k-4},\ \ (u-u_I)'\bot P_{k-3},\ \ (u-u_I)''\bot P_{k-2}.
%\]
%  Then \eqref{ortho:3} follows.

 %  Furthermore, we have the following approximation property for $u_I$.
% \[
%     \|u-u_I\|_{0,p}\lesssim h^{k+1}\|u\|_{k+1,p},\ \ p\ge 1.
% \]

  %
%   At roots of  $J_{k+1}^{-2,-2}(x)$, there holds
%\[
%   |(u-u_I)(l_{im})|= \left| \sum_{n=k+2}^{\infty} u_n  \hat J_{n}^{-2,-2}(x)   \right|\lesssim h^{k+2}|u|_{k+2,\infty}.
%\]
% This finishes the proof of \eqref{approx:super12}. Similarly, we can prove \eqref{approx:super3}-\eqref{approx:super4}.
% The proof is complete.
 \end{proof}

    As we may observe, if we choose $l=k+1$, the point-wise error estimates in the above Proposition 
    indicate  s  superconvergent phenomenon 
of $u_I$ at mesh nodes,  at roots of  the Jacobi polynomial,  and at the Lobatto lines and  Gauss lines.

  \section{Error estimates and  superconvergence analysis}

  In this section, we present error estimates and  study superconvergence properties of the $C^1$ Petrov-Galerkin method  for \eqref{con_laws}.  In the rest of this paper,  we use the following notations:
\begin{equation}
   e=u-u_h,\ \ \xi=u_I-u_h,\ \ \eta=u-u_I.
\end{equation}

\subsection{Optimal error estimates}

\begin{theorem}\label{theo:1}
   Assume that $u\in H^{l}(\Omega)$ is the solution of \eqref{con_laws}, and $u_h$ is the solution of \eqref{PG}. Then
\begin{equation}\label{optimal:1}
   \|u-u_h\|_0+h\|u-u_h\|_1+h^2\|u-u_h\|_2\lesssim h^{r}\|u\|_{r+1},\ \ r\le \min(l-1,k+1). 
\end{equation}
\end{theorem}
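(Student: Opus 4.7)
The plan is to write $e = u-u_h = \eta + \xi$, where $\eta = u-u_I$ is controlled directly by Proposition \ref{proposition:1} together with the elementwise bounds in Lemma \ref{lemma:1} (in particular $\|\eta\|_0 \lesssim h^{r}\|u\|_{r}$, $\|\eta\|_1 \lesssim h^{r-1}\|u\|_{r}$, and $\|\eta\|_2 \lesssim h^{r-1}\|u\|_{r+1}$ by summing the estimates on $E^x u$, $E^y u$, $E^x E^y u$). The whole task therefore reduces to bounding $\xi = u_I - u_h \in V_h^0$ in the three norms and invoking the triangle inequality. The key device for $\xi$ is the consistency identity
\[
   a(\xi, v_h) = a(u_I - u, v_h) + a(u - u_h, v_h) = -a(\eta, v_h), \quad \forall\, v_h \in W_h,
\]
coming from $a(u-u_h, v_h) = 0$, which converts all information about the scheme into information about the projection error $\eta$.

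With this in hand I would apply Lemma \ref{lemma-1} to $v = \xi$ with two carefully chosen test functions. Since $\xi|_\tau \in \mathbb{Q}_k$ implies $\xi_{xxyy}|_\tau \in \mathbb{Q}_{k-2} \subseteq W_h$, the choice $v_h = \xi_{xxyy}$ in \eqref{eqq:2} yields
\[
   \|\Delta \xi\|_0^2 + \|\xi_{xxy}\|_0^2 + \|\xi_{xyy}\|_0^2 \lesssim |a(\eta, \xi_{xxyy})| + \|\xi\|_0^2.
\]
Next, letting $\varphi \in H^2(\Omega)$ solve the dual problem \eqref{dual:problem} with right-hand side $\xi$, the choice $v_h = \mathcal{I}_h\varphi$ in \eqref{eqq:1} gives
\[
   \|\xi\|_0^2 \lesssim h^4(\|\xi_{xxy}\|_0^2 + \|\xi_{xyy}\|_0^2) + |a(\eta, \mathcal{I}_h\varphi)|.
\]
Substituting the first bound into the second and absorbing the resulting $h^4$ times $\|\xi\|_0^2$ for $h$ small produces a clean estimate of $\|\xi\|_0$ in terms of $|a(\eta, \mathcal{I}_h\varphi)|$ alone, and then feeding that back into the first estimate controls $\|\Delta \xi\|_0$ and the mixed third derivatives.

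The main obstacle, and the technical heart of the proof, is bounding $|a(\eta, \xi_{xxyy})|$ sharply; a naive Cauchy--Schwarz is insufficient because $\|\xi_{xxyy}\|_0$ carries two powers of $h^{-1}$ from inverse inequalities. I would instead decompose $\eta = E^x u + E^y u - E^x E^y u$ and work elementwise on each of the three pieces of $a(\eta, \xi_{xxyy}) = (-\alpha \Delta \eta + \beta\cdot\nabla\eta + \gamma\eta,\,\xi_{xxyy})$, integrating by parts in $x$ and/or $y$ so as to move derivatives off $\xi_{xxyy}$. The crucial orthogonalities $\partial_{xx}^2 E^x u \perp \mathbb{P}_{k-2}(x)$ and $\partial_{yy}^2 E^y u \perp \mathbb{P}_{k-2}(y)$ from Lemma \ref{lemma:1}, combined with the $C^1$-conformity of $\xi$ (so that all interelement traces of $\xi$ and $\nabla\xi$ telescope or vanish at $\partial\Omega$), eliminate the leading contributions and leave only terms involving the high-order remainder $E^x E^y u$ or transverse derivatives of $E^x u, E^y u$; each such term picks up an extra factor of $h$. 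The outcome is an estimate of the form $|a(\eta, \xi_{xxyy})| \lesssim h^{r-1}\|u\|_{r+1}\bigl(\|\xi_{xxy}\|_0 + \|\xi_{xyy}\|_0\bigr) + h^{2r}\|u\|_{r+1}^2$, and Young's inequality absorbs the first piece into the left-hand side.

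The analogous but easier bound for $|a(\eta, \mathcal{I}_h\varphi)|$ uses the $H^2$ regularity $\|\varphi\|_2 \lesssim \|\xi\|_0$ together with $\|\mathcal{I}_h\varphi\|_{2,\infty}$ controlled by $\|\varphi\|_2$: passing derivatives onto the smooth bilinear $\mathcal{I}_h\varphi$ via elementwise integration by parts and again invoking the orthogonality of $\partial_{xx}^2 E^x u, \partial_{yy}^2 E^y u$ against constants produces $|a(\eta, \mathcal{I}_h\varphi)| \lesssim h^{r+1}\|u\|_{r+1}\|\xi\|_0$ (the gain of one extra power comes from the dual regularity). Collecting the pieces gives $\|\xi\|_0 \lesssim h^{r+1}\|u\|_{r+1}$, $\|\Delta\xi\|_0 + \|\xi_{xxy}\|_0 + \|\xi_{xyy}\|_0 \lesssim h^{r-1}\|u\|_{r+1}$, and the remaining $H^1$ and full $H^2$ pieces of $\xi$ follow by the inverse inequalities $\|\xi\|_1 \lesssim h^{-1}\|\xi\|_0$ and $\|\xi\|_2 \lesssim \|\Delta\xi\|_0 + \|\xi\|_0$ valid for $C^1$ tensor-product polynomials on quasi-uniform meshes. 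Combined with the estimates on $\eta$, the triangle inequality yields \eqref{optimal:1}.
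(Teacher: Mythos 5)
Your proposal follows essentially the same route as the paper: the splitting $e=\eta+\xi$, the two test functions $\xi_{xxyy}$ and ${\mathcal I}_h\varphi$ fed into Lemma \ref{lemma-1}, the decomposition $\eta=E^xu+E^yu-E^xE^yu$ with elementwise integration by parts exploiting the orthogonality of $\partial_{xx}E^xu$ and $\partial_{yy}E^yu$, and the duality/bootstrapping step for $\|\xi\|_0$ followed by a triangle inequality. The only quibble is that your intermediate claim $|a(\eta,{\mathcal I}_h\varphi)|\lesssim h^{r+1}\|u\|_{r+1}\|\xi\|_0$ overshoots by one power of $h$ at the endpoint $r=k+1$ (there $\|\eta\|_0$ is only $O(h^{k+1})$, so the term $(\eta,L^*\varphi)$ cannot gain the extra order); the correct rate $h^{r}\|u\|_{r}\|\xi\|_0$, which is what the paper uses, already yields \eqref{optimal:1}.
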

\begin{proof}
    In light of \eqref{eqq:2} and the orthogonality $a(e,w)=a(\xi+\eta,w)=0$ for all $w\in W_h$, we have
\begin{eqnarray*}
   \|\triangle\xi\|^2_0+\|\xi_{xxy}\|_0^2+\|\xi_{xyy}\|_0^2&\lesssim & |a(\xi,\xi_{xxyy})|+\|\xi\|_0^2\\
   &\lesssim & |a(\eta,\xi_{xxyy})|+|a(\eta,{\mathcal I}_h\varphi)|,
\end{eqnarray*}
  where  $\varphi$ is the solution of the problem \eqref{dual:problem} with $v=\xi$.
   By  \eqref{error:decomposition}, we have
\begin{eqnarray*}
   a(\eta,w)=a(E^xu,w)+a(E^yu,w)-a(E^xE^yu,w),\ \ \forall w\in W_h.
\end{eqnarray*}
  Recalling the definition of the bilinear form and using and the integration by parts and the properties of $E^xu$ in Lemma \ref{lemma:1}, we derive  for all $\mu \le \min(k+1,l-2)$ 
\begin{eqnarray*}
   |a(E^xu,\xi_{xxyy})| %&=&|(-\alpha\partial_{xx}^2E^xu-\alpha\partial_{yy}^2E^xu+\beta_1\partial_xE^xu+\beta_2\partial_yE^xu+\gamma E^xu,\xi_{xxyy})|\\
         &=&|(-\alpha E^xu_{yy}+\beta_1\partial_xE^xu+\beta_2E^xu_y+\gamma E^xu,\xi_{xxyy})|\\
         &=&|(\partial_x(\alpha E^xu_{yy}-\beta_2 \partial_yE^xu-\gamma E^xu),\xi_{xyy})|+|\beta_1(\partial_{x}\partial_yE^xu, \xi_{xxy})|\\
         &\lesssim &
         h^{\mu-1}\|u\|_{\mu+2}(\|\xi_{xxy}\|_0+\|\xi_{xyy}\|_0). 
\end{eqnarray*}
  Consequently, 
 \[
    |a(E^xE^yu,\xi_{xxyy})| \lesssim  h^{\mu-1}\|E^yu\|_{\mu+2}(\|\xi_{xxy}\|_0+\|\xi_{xyy}\|_0)\lesssim h^{\mu-1}\|u\|_{\mu+2}(\|\xi_{xxy}\|_0+\|\xi_{xyy}\|_0). 
 \]
%  where in the third and last steps, we have used the integration by parts and \eqref{eq:4}-\eqref{eq:7}.
%\begin{eqnarray*}
%    a(E^xu,w)&=&-\alpha(\partial_{xx}^2E^xu+\partial_{yy}^2E^xu+\beta_1\partial_xE^xu+\beta_2\partial_yE^xu+\gamma E^xu,w)\\
%         &=&-\alpha(E^xu_{yy}+\beta_1\partial_xE^xu+\beta_2E^xu_y+\gamma E^xu,w)\\
%         &\lesssim & (|\beta_1| h^k+h^{k+1})\|u\|_{k+1}.
%\end{eqnarray*}
  By the same arguments, there holds
 \[
     |a(E^yu,\xi_{xxyy})|\lesssim h^{\mu-1}\|u\|_{\mu+2}(\|\xi_{xxy}\|_0+\|\xi_{xyy}\|_0).
 \]
%\begin{eqnarray*}
 %  |a(E^yu,\xi_{xxyy})|&=&|(-\alpha E^yu_{xx}+\beta_2\partial_yE^yu+\beta_1E^yu_x+\gamma E^yu,\xi_{xxyy})|\\
%   &=&|(\partial_y(\alpha E^yu_{xx}- \beta_1 \partial_xE^yu-\gamma E^yu),\xi_{xxy})|+|(\beta_2\partial_x\partial_yE^yu,\xi_{xyy})|\\
%         &\lesssim & h^{\mu-1}\|u\|_{\mu+2}(\|\xi_{xxy}\|_0+\|\xi_{xyy}\|_0).
%\end{eqnarray*}  
 %   As for the term $a(E^xE^yu,\xi_{xxyy})$, we have from the orthogonalities  of $E^xu,E^yu$ and the estimates in \eqref{eq:8} that
%\begin{eqnarray*}
%   |a(E^xE^yu,\xi_{xxyy})|&=&|(\beta_2\partial_yE^xE^yu+\beta_1\partial_xE^xE^yu+\gamma E^xE^yu,\xi_{xxyy})|\\
%   &=&|(\partial_x(  {\bf \beta}\cdot \nabla E^xE^yu+\gamma E^xE^yu),\xi_{xyy})|
%   \lesssim  h^{\mu-1}\|u\|_{\mu+1} \|\xi_{xyy}\|_0.
%\end{eqnarray*}
   Then 
\begin{equation}\label{e-10}
  |a(\eta,\xi_{xxyy})|\lesssim h^{\mu-1}\|u\|_{\mu+2}(\|\xi_{xyy}\|_0+\|\xi_{xxy}\|_0),\ \  \mu \le \min(k+1,l-2). 
\end{equation}

   Now we consider the term $a(\eta,{\mathcal I}_h\varphi)$. 
  Noticing that $\varphi=0$ on $\partial\Omega$, we have from the integration by parts
 \begin{eqnarray*}
   |a(\eta,{\mathcal I}_h\varphi)|&=&|a(\eta,{\mathcal I}_h\varphi-\varphi)|+ |a(\eta,\varphi)|\\
                           &=&|a(\eta,{\mathcal I}_h\varphi-\varphi)|+|(\eta, -\alpha\triangle \varphi -{\bf \beta}\cdot\nabla \varphi +\gamma\varphi)|\\
                           &\lesssim & (h^{2}\|\eta\|_{2} +\|\eta\|_0)\|\varphi\|_{2}\lesssim h^{\mu'}\|u\|_{\mu'}\|\xi\|_{0},
\end{eqnarray*}
%\begin{eqnarray*}
 %  |a(E^xu,{\mathcal I}_h\varphi)|&=&|(-\alpha E^xu_{yy}+\beta_1\partial_xE^xu+\beta_2E^xu_y+\gamma E^xu,{\mathcal I}_h\varphi)|\\
  %                         &=&|(\beta_2E^xu_y+\gamma E^xu,{\mathcal I}_h\varphi)+(\alpha E^xu_y,\partial_y{\mathcal I}_h\varphi)-(\beta_1E^xu,\partial_x{\mathcal I}_h\varphi)|\\
 %                          &\lesssim & h^{l+1}\|u\|_{l+2}\|\varphi\|_{1}\lesssim h^{l+1}\|u\|_{l+2}\|\xi\|_{0},\ \ l\le k, 
%\end{eqnarray*}
  where $\mu'\le \min(l,k+1)$, and 
   in the last step, we have used the $H^2$ regularity $\|\varphi\|_2\lesssim \|\xi\|_0$.
  Then we choose $v=\xi$ in \eqref{eqq:1}-\eqref{eqq:2} to obtain
 \begin{eqnarray*}
    && \|\xi\|_0^2\lesssim h^{2\mu'}\|u\|^2_{\mu'}+h^4(\|\xi_{xxy}\|_0^2+\|\xi_{xyy}\|_0^2),\\
    &&\|\triangle\xi\|^2_0+\|\xi_{xyy}\|^2_0+\|\xi_{xxy}\|^2_0\lesssim h^{2(\mu-1)}\|u\|^2_{\mu+2}+\|\xi\|^2_0.
 \end{eqnarray*}
  Consequently,  there holds for $ \mu \le \min(k+1,l-2), \mu' \le \min(l-1,k+1)$, 
\begin{equation}\label{e-17}
   \|\triangle\xi\|_0+\|\xi_{xyy}\|_0+\|\xi_{xxy}\|_0\lesssim h^{\mu-1}\|u\|_{\mu+2},\ \ 
   \|\xi\|_0\lesssim h^{\mu'}\|u\|_{\mu'+1}. 
\end{equation}
  As for the $H^1$-norm error estimate,  a direct calculation from the integration by parts  yields 
\[
   (\xi_x,\xi_x)+(\xi_y,\xi_y)=-(\xi,\xi_{xx})-(\xi,\xi_{yy})\lesssim \|\xi\|_0|\xi|_2\lesssim h^{2(\mu'-1)}\|u\|^2_{\mu'+1}. 
\]
  Then the desired result \eqref{optimal:1} follows from the triangle inequality and approximation properties of $u_I$.
  The proof is complete. $\Box$
\end{proof}

\subsection{Superconvergence analysis}

  In this subsection, we study superconvergence properties of the $C^1$ Petrov-Galerkin methods. 
  As the superconvergence analysis would require more strong regularity assumption on the smoothness of $u$ than one would need to obtain the counterpart optimal convergence rate, 
  we suppose the exact solution  $u$ is smooth enough in our superconvergence analysis.  In our later section, we discuss the interior estimates, i.e., the error in an interior domain $\Omega$, 
  with less requirements on the smoothness of $u$ on the whole domain $\Omega$.

 \begin{theorem}\label{theo:02}
 Assume that $u\in H^{k+3}(\Omega)$ is the solution of \eqref{con_laws}, and $u_h$ is the solution of \eqref{PG}.
The following superconvergence properties hold true.
\begin{enumerate}
\item [1.] Supercloseness result between $u_h$ and $u_I$ in all $H^2, H^1, L^2$-norms:
\begin{equation}\label{e-18}
   \|u_h-u_I\|_{1}+h\|u_h-u_I\|_2\lesssim h^{k+1}\|u\|_{k+3},\ \ \|u_h-u_I\|_0\lesssim h^{\min {(k+2,2k-2)}}\|u\|_{k+3}.
\end{equation}
  \item [2.] Superconvergence of  the function value  on  roots of  $J_{i,k+1}^{-2,-2}(x)J_{j,k+1}^{-2,-2}(y)$ in average sense for $k\ge 4$, i.e., 
 \begin{eqnarray}\label{super:pg1}
  e_{u,J}:= \Big(\frac {1}{NM}\sum_{P\in{\mathcal R}}\big(u-u_h\big)^2(P)\Big)^{\frac 12}\lesssim h^{k+2}\|u\|_{k+3}.
\end{eqnarray}
\item [3.] Superconvergence of the first-order derivative on Lobatto  lines in average sense, i.e.,
\begin{equation}\label{super:k3}
   e_{\nabla u,l}:=\Big(\frac {1}{N_x}\sum_{P_i\in {\mathcal E}_x^l}\partial_x\big(u-u_h\big)^2\big(P_i\big)+\frac {1}{N_y}\sum_{Q_i\in {\mathcal E}_y^l }\partial_y\big(u-u_h\big)^2\big(Q_i\big)\Big)^{\frac 12}\lesssim h^{k+1}\|u\|_{k+3}.
\end{equation}
\item [4.] Superconvergence of  the second-order derivative on   Gauss line in average sense. That is, 
\begin{equation}\label{super:k4}
   e_{\triangle u,g}:= \Big(\frac {1}{M_x}\sum_{P_i\in {\mathcal E}_x^g}\partial_{xx}^2\big(u-u_h\big)^2(P_i)+\frac {1}{M_y}\sum_{Q_i\in {\mathcal E}_y^g }\partial_{yy}^2(u-u_h)^2(Q_i)\Big)^{\frac 12}\lesssim h^{k}\|u\|_{k+3}.
\end{equation}
   Here $N_x,N_y,M_x,M_y$ denote the cardinalities of ${\mathcal E}_x^l,{\mathcal E}_y^l,{\mathcal E}_x^g,{\mathcal E}_y^g$, respectively.

\end{enumerate}
 \end {theorem}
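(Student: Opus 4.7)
The plan is to prove Part 1 (supercloseness) first and then derive Parts 2--4 via the triangle inequality $u-u_h = (u-u_I)-\xi$ with $\xi := u_h-u_I$, using the pointwise/line bounds for $u-u_I$ from Proposition 4.1. For the $H^2$-supercloseness I apply \eqref{eqq:2} to $\xi$. Galerkin orthogonality gives $a(\xi, \xi_{xxyy}) = -a(\eta, \xi_{xxyy})$ with $\eta := u-u_I$, and the task is to sharpen the estimate used in Theorem 5.1 by one full order of $h$, producing $|a(\eta, \xi_{xxyy})| \lesssim h^{k+1}\|u\|_{k+3}(\|\xi_{xxy}\|_0 + \|\xi_{xyy}\|_0)$. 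Decomposing $\eta = E^x u + E^y u - E^x E^y u$ via \eqref{error:decomposition}, the pure Laplacian piece $(-\alpha\partial_{xx}^2 E^x u, \xi_{xxyy})$ vanishes elementwise by the Jacobi orthogonality $\partial_{xx}^2 E^x u \perp \mathbb{P}_{k-2}(x)$ from Lemma 4.1, and symmetrically for $E^y u$. The remaining convection, reaction, and mixed $\partial_{xy}$ pieces are handled by integrating by parts in $x$ or $y$, while element-interface contributions cancel by $E^x u(x_i,\cdot) = \partial_x E^x u(x_i,\cdot) = 0$. Inserting into \eqref{eqq:2} with $\varepsilon$-absorption yields the $H^2$-supercloseness modulo an $\|\xi\|_0^2$ term.

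For the $L^2$-supercloseness I apply \eqref{eqq:1}. The $h^4(\|\xi_{xxy}\|_0^2 + \|\xi_{xyy}\|_0^2)$ summand is controlled by the $H^2$-super bound, contributing order $h^{k+3}$. The second summand is $|a(\xi, \mathcal{I}_h\varphi)| = |a(\eta, \mathcal{I}_h\varphi)| \leq |a(\eta, \mathcal{I}_h\varphi-\varphi)| + |a(\eta, \varphi)|$. For $|a(\eta, \varphi)|$, integration by parts (justified because $\eta$ vanishes on $\partial \Omega$, a consequence of $u|_{\partial\Omega} = 0$ and the boundary values of the Jacobi basis $J_p^{-2,-2}(\pm 1)$) reduces it to $(\eta, \xi)$, which combined with elliptic regularity $\|\varphi\|_2 \lesssim \|\xi\|_0$ and an orthogonality exploitation of $\eta$ gives a bound of the form $h^{k+2}\|u\|_{k+3}\|\xi\|_0$. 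The commutator $|a(\eta, \mathcal{I}_h\varphi-\varphi)|$ contributes the same rate through the $O(h^2)$ interpolation error on $\varphi$. The $h^{2k-2}$ part of the hybrid min reflects the regime where $k$ is too small for the orthogonality enhancement to deliver full $h^{k+2}$, in which case the $h^4$ factor of \eqref{eqq:1} paired with the $H^2$-supercloseness becomes the binding contribution. The $H^1$-supercloseness then follows from $\|\nabla\xi\|_0^2 = -(\xi, \triangle\xi) \leq \|\xi\|_0 \|\triangle \xi\|_0$.

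For Parts 2--4, write $(u-u_h)(P) = (u-u_I)(P) - \xi(P)$. The first term obeys the asserted rates pointwise by Proposition 4.1, parts 3--5. For the $\xi$ contribution, a standard discrete-quadrature estimate yields $\tfrac{1}{NM}\sum_{P}\xi^2(P) \lesssim \|\xi\|_0^2$, with analogous bounds controlling $\tfrac{1}{N_x}\sum\partial_x\xi^2$ and $\tfrac{1}{M_x}\sum\partial_{xx}^2\xi^2$ on Lobatto and Gauss lines by $\|\nabla\xi\|_0^2$ and $\|\xi\|_2^2$ respectively. Substituting the supercloseness rates from Part 1 delivers the target bounds; the restriction $k \ge 4$ in Part 2 is precisely what ensures $\min(k+2, 2k-2) = k+2$ so that the $\xi$-contribution does not dominate the $h^{k+2}$ pointwise rate of $u-u_I$ at the Jacobi roots.

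The main obstacle will be the $L^2$-supercloseness: assembling the $h^{k+2}$ duality contribution alongside the $h^{2k-2}$ rate arising from the $H^2$-supercloseness feedback into \eqref{eqq:1}, and rigorously justifying $\eta|_{\partial \Omega} = 0$ together with the absence of interface residuals when integrating $a(\eta, \varphi)$ by parts, requires delicate combined use of the Jacobi orthogonality, the $C^1$-conformity of $\xi$, and careful tracking of all surface terms across element boundaries.
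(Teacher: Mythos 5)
Your overall skeleton (apply \eqref{eqq:1}--\eqref{eqq:2} to $\xi$, use Galerkin orthogonality and the decomposition $\eta=E^xu+E^yu-E^xE^yu$, then pass to Parts 2--4 by the triangle inequality with Proposition \ref{proposition:1} and discrete norm equivalence) matches the paper, and Parts 2--4 as you describe them are fine. But the core of Part 1 has two genuine gaps. First, the $L^2$-supercloseness: your splitting $a(\eta,{\mathcal I}_h\varphi)=a(\eta,\varphi)+a(\eta,{\mathcal I}_h\varphi-\varphi)$ cannot reach $h^{k+2}$. Integrating $a(\eta,\varphi)$ by parts gives $(\eta,\xi)$, and since $\|\eta\|_0\sim h^{k+1}$ while $\xi\in\mathbb Q_k$ offers no exploitable orthogonality against $E^xu$ (the projection $E^xu\perp\mathbb P_{k-4}(x)$ only lets you replace $\xi$ by a piece of equal $L^2$ size), the best you get is $h^{k+1}\|u\|_{k+1}\|\xi\|_0$, and the commutator term is of the same order; this yields only $\|\xi\|_0\lesssim h^{k+1}$, one order short for $k\ge4$. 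The paper instead keeps ${\mathcal I}_h\varphi$ intact and uses $E^xu\perp\mathbb P_0(x)$ together with $\partial_xE^xu\perp\mathbb P_1(x)$ (valid for $k\ge4$) to subtract the cell average $\bar\varphi$ from ${\mathcal I}_h\varphi$, which is what produces the extra factor $h\|\varphi\|_1$ and hence $h^{k+2}$ (see \eqref{eq:9}). Relatedly, your explanation of the $\min(k+2,2k-2)$ is wrong: the $h^4(\|\xi_{xxy}\|_0^2+\|\xi_{xyy}\|_0^2)$ term in \eqref{eqq:1} contributes $h^{k+2}$ for every $k$ and is never binding; the $2k-2$ branch at $k=3$ comes from the failure of $\partial_xE^xu\perp\mathbb P_1(x)$, which forces an integration by parts yielding only $h^{k+1}=h^{2k-2}$.

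Second, the $H^1$-supercloseness: $\|\nabla\xi\|_0^2\le\|\xi\|_0\|\triangle\xi\|_0$ gives $h^{(\min(k+2,2k-2)+k)/2}$, which equals $h^{k+1}$ only for $k\ge4$; for $k=3$ it gives $h^{7/2}$, short of the claimed $h^{4}$. The paper handles $\|\nabla\xi\|_0$ by a separate duality argument with data $-\nabla\cdot\zeta$, in which the term $a(\eta,{\mathcal I}_h\psi)$ is again estimated by the cell-average trick and the term $h\|\xi\|_2\|\psi\|_1$ supplies $h^{k+1}$ uniformly in $k$. Finally, a smaller point: your $H^2$ target $|a(\eta,\xi_{xxyy})|\lesssim h^{k+1}\|u\|_{k+3}(\|\xi_{xxy}\|_0+\|\xi_{xyy}\|_0)$ is both unnecessary and unattainable --- the convection contribution $\beta_1(\partial_x\partial_yE^xu,\xi_{xxy})$ is genuinely $O(h^k)$, but $h^k$ is exactly what the claimed bound $h\|\xi\|_2\lesssim h^{k+1}$ requires, and it is already provided by \eqref{e-10}/\eqref{e-17} with $\mu=k+1$; no sharpening is needed there.
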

 \begin{proof}  First, by choosing $\mu=k+1$ in \eqref{e-17}, we get 
   \begin{eqnarray*}
         \|\triangle\xi\|_0+\|\xi_{xyy}\|_0+\|\xi_{xxy}\|_0\lesssim h^{k}\|u\|_{k+3}. 
    \end{eqnarray*}
        By using  \eqref{eqq:1} and the orthogonality $a(\xi+\eta,v)=0$ for all $v\in W_h$,    we have 
    \[
       \| \xi\|^2_0\lesssim h^{2k+4}\|u\|^2_{k+3}+|a(\eta,{\mathcal I}_h\varphi)|=h^{2k+4}\|u\|^2_{k+3}+|a(E^xu+E^yu-E^xE^yu,{\mathcal I}_h\varphi)|. 
    \]
   Here $\varphi$ is the solution of \eqref{dual:problem} with $v= \xi$, and ${\mathcal I}_h\varphi\in\mathbb Q_1$ denotes the bilinear interpolation function of $\varphi$. 
 Noticing  that $E^xu\bot \mathbb P_{0}(x), \partial_xE^xu\bot\mathbb P_{1}(x)$ for $k\ge 4$,  then 
 \begin{eqnarray*}
    | a(E^xu,{\mathcal I}_h\varphi)|=|(-\alpha E^xu_{yy}+\beta_2E^xu_y+\gamma E^xu, {\mathcal I}_h\varphi-\bar \varphi)|\lesssim h^{k+2}\|u\|_{k+3}\|\varphi\|_1,
 \end{eqnarray*}
   where $\bar\varphi$ denotes the cell average of $\varphi$.  As for $k=3$,  we  use the integration by parts to obtain 
  \begin{eqnarray*}
    | a(E^xu,{\mathcal I}_h\varphi)|&=&|(-\alpha E^xu_{yy}+\beta_2E^xu_y+\gamma E^xu, {\mathcal I}_h\varphi)-(\beta_1E^xu,\partial_x{\mathcal I}_h\varphi)|\\
    &\lesssim& h^{k+1}\|u\|_{k+3}\|\varphi\|_1.
 \end{eqnarray*}
  Consequently,  
 \[
     |a(E^xu,{\mathcal I}_h\varphi)|\lesssim h^{\min {(k+2,2k-2)}} \|u\|_{k+3}\|\varphi\|_1.
 \] 
   Similarly, there holds 
 \begin{eqnarray*}
    | a(E^yu,{\mathcal I}_h\varphi)| +  | a(E^xE^yu,{\mathcal I}_h\varphi)|  \lesssim h^{\min {(k+2,2k-2)}} \|u\|_{k+3}\|\varphi\|_1,
 \end{eqnarray*}  
   and thus 
\begin{eqnarray}\label{eq:9}
     | a(\eta,{\mathcal I}_h\varphi)|\lesssim h^{\min {(k+2,2k-2)}} \|u\|_{k+3}\|\varphi\|_1,
 \end{eqnarray}
    which yields, together with the $H^2$ regularity $\|\varphi\|_2\lesssim \|\xi\|_0$, 
\[
    \| \xi\|_0\lesssim   h^{\min {(k+2,2k-2)}} \|u\|_{k+3}. 
\]

    We next estimate $\|\nabla \xi\|_0$. Given any $\zeta\in [C^1(\Omega)]^2$, let $\psi$ be the solution of the following
   dual problem
 \begin{eqnarray*}
   -\nabla\cdot(\alpha \nabla \psi)-\beta\cdot\nabla \psi+\gamma \psi=-\nabla\cdot\zeta\ \ \rm{in}\   \ \Omega,\
     {\rm and}\  \psi=0,  \ \rm{on}\  \  \partial\Omega.
\end{eqnarray*}
     By using the integration by parts,
 \begin{eqnarray*}
    (\nabla  \xi, \zeta)=-( \xi, \nabla\cdot\zeta)&=&(\xi,-(\nabla\cdot(\alpha \nabla \psi)-\beta\cdot\nabla \psi+\gamma \psi)\\
             % &=&(\tilde\xi,(\nabla\cdot(\alpha \nabla \psi_x)+\beta\cdot\nabla \psi_x-\gamma \psi_x)\\
              &=&(-\nabla\cdot(\alpha \nabla  \xi )+\beta \cdot\nabla \xi+\gamma   \xi,\psi-{\mathcal I}_h\psi+{\mathcal I}_h\psi)\\
              &\lesssim & h \|\xi\|_{2}\|\psi\|_1+|a( \xi,{\mathcal I}_h\psi)|=h \|\xi\|_{2}\|\psi\|_1+|a( \eta,{\mathcal I}_h\psi)|.
\end{eqnarray*}
  Consequently, by the regularity result $\|\psi\|_1\lesssim \|\nabla\cdot \zeta\|_{-1}\lesssim \|\zeta\|_0$,  \eqref{eq:9}, and 
  the estimate of  $\|\xi\|_2$,  we derive 
\[
       |(\nabla  \xi, \zeta)|\lesssim  h^{k+1} \|u\|_{k+3}\|\|\zeta\|_0.
\]
  Since the set of all such $\zeta$ is dense in $L^2(\Omega)$,  the above inequality indicates that
\begin{equation}\label{eq:18}
     \|\nabla \xi\|_0\lesssim h^{k+1}\|u\|_{k+3}.
\end{equation}
   This finishes the proof of \eqref{e-18}.   
  
  In light of the superconvergence properties of $u_I$ in Proposition \ref{proposition:1},   we have 
   \begin{eqnarray*}
    e_{u,J}
    &\lesssim &\left(\frac {1}{MN}\sum_{i=1}^M\sum_{j=1}^N\| \xi\|^2_{0,\infty, \tau_{i,j}}\right)^{\frac 12}+h^{k+2}\|u\|_{k+3}.\\
    &\lesssim&  \| \xi\|_0+h^{k+2}\|u\|_{k+3}\lesssim  h^{\min {(k+2,2k-2)}} \|u\|_{k+3}. 
   \end{eqnarray*}
  Then \eqref{super:pg1} follows. 
  Similarly,   there hold
\[
   e_{\nabla u,l}\lesssim \|\nabla \xi\|_{0}+h^{k+1}\|u\|_{k+3},\  \
  e_{\triangle u,g}\lesssim \|\triangle \xi\|_{0}+h^{k}\|u\|_{k+3}.
\]
 Then \eqref{super:k3}-\eqref{super:k4} follow from the estimates of $\| \nabla \xi\|_0$ and $\| \xi\|_{2}$ directly.
 This finishes our proof. $\Box$
\end{proof}

  In the following, we study the highest superconvergence result of the $C^1$ Petrov-Galerkin approximation at the mesh nodes.  We use the idea of correction function to
   achieve our superconvergence goal. 
   The basic idea   of the correction function  is the construction of a specially designed  function $w_h\in V_h^0$
   such that $\tilde u_I=u_I-w_h$ is superconvergent towards the numerical solution $u_h$ in some norms, e.g., $H^2$ or $L^2$-norm, with higher order of accuracy. 
   
    Denote 
 \[
    \tilde \xi=\tilde u_I-u_h=u_I-w_h-u_h. 
 \]
%  To this end,  we  adopt the idea of correction function, i.e.,  based on the optimal error estimate for $\xi=u_I-u_h$, we construct  a function $w_h\in V_h$ to correct the error $\xi$ such that $\tilde \xi:=u_I-w_h-u_h$ is of higher order.
 % Our analysis is along this line. Therefore, our ultimate goal is the construct the correction function $w_h$.
In light of \eqref{eqq:1}-\eqref{eqq:2}, the errors $\|\tilde \xi\|_0$ and $\|\tilde\xi\|_2$ are dependent on two terms: $a(\tilde\xi,\tilde\xi_{xxyy})$ and
  $a(\tilde\xi,{\mathcal I}_h\varphi)$. By the orthogonality, we have
\begin{equation}\label{orth:1}
   a(\tilde\xi,\theta)=-a(\eta+w_h,\theta),  \ \ \forall \theta\in W_h. 
\end{equation}
  In other words, to achieve our superconvergence goal, the function $w_h\in V_h$ should be specially construct such that
\begin{equation}\label{decom:2}
   a(\eta,\theta)+a(w_h,\theta)=a(E^xu+E^yu-E^xE^yu,\theta)+a(w_h,\theta),\ \ \ \forall \theta\in W_h
\end{equation}
is of high order. Note that if we choose $w_h=0$, then we  get the superconvergence results presented in Theorem \ref{theo:02}, which is one order higher than the counterpart optimal convergence rate.

The next Proposition shows the existence of the correction function $w_h$, which satisfies our superconvergence goal. 

\begin{proposition}\label{proposition:2}
 Let $u\in W^{2k+1,\infty}(\Omega)$. There  exists a $w_h\in V^0_h$ such that
\begin{align}\label{esti:ww}
   \|w_h\|_{0,\infty}\lesssim h^{\min{(k+2,2k-2)}}\|u\|_{2k+1,\infty},\
   \|w_h\|_{1,\infty}+h\|w_h\|_{2,\infty}\lesssim h^{k+1}\|u\|_{2k+1,\infty}, \end{align}
   \begin{align}\label{esti:ww1}
    |w_h(x_i,y_j)|+|\nabla w_h(x_i,y_j)|\lesssim h^{2k-2}\|u\|_{2k+1,\infty}.
\end{align}
  Furthermore, there holds for any $\theta\in W_h$
\begin{eqnarray}\label{eqq:8}
    |a(u-u_I+w_h,\theta)|\lesssim h^{2k-2}\|u\|_{2k+1,\infty}\|\theta\|_{0}.
 %  && |a(u-u_I+w_h,v_{xxyy})|\lesssim h^{2k-2}\|u\|_{2k+2,\infty}(\|v_{xxy}\|_{0}+\|v_{xyy}\|_{0}).
\end{eqnarray}
\end{proposition}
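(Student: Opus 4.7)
The plan is to apply the correction function technique used for one-dimensional Petrov-Galerkin methods in \cite{cao-jia-zhang}, extended iteratively to the tensor-product setting. Since $\eta = E^xu + E^yu - E^xE^yu$, I would first use integration by parts in $a(\eta,\theta)$ together with the orthogonalities $\partial_{xx}^2 E^xu\perp\mathbb{P}_{k-2}(x)$ and $\partial_{yy}^2 E^yu\perp\mathbb{P}_{k-2}(y)$ from Lemma~\ref{lemma:1} to isolate the non-cancelled leading-order residual, which comes from the transverse Laplacian pieces $\partial_{yy}^2 E^xu$, $\partial_{xx}^2 E^yu$, the convection terms and the reaction term, and which is of order $h^{k+1}\|u\|_{k+3,\infty}\|\theta\|_0$. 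Expanding these pieces in the tensor Jacobi basis $J_{i,p}^{-2,-2}(x)J_{j,q}^{-2,-2}(y)$ isolates the leading mismatch to a small number of identifiable modes.

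Next I would construct a first-level correction $w_h^{(1)}\in V_h^0$ locally as a linear combination of tensor-product Jacobi--Hermite functions of the form $J_{i,k+1}^{-2,-2}(x)\phi(y)$ and $\phi(x)J_{j,k+1}^{-2,-2}(y)$, with the transverse factors $\phi$ chosen coherently between neighbouring elements and element-wise coefficients determined to annihilate the leading residual when paired against every $\theta\in W_h$. The vanishing property \eqref{eq:3} — that $J_{k+1}^{-2,-2}$ and its first derivative vanish at $s=\pm 1$ — automatically guarantees that each building block is $C^1$ across element edges and vanishes on $\partial\Omega$, hence $w_h^{(1)}\in V_h^0$. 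I would then iterate: at stage $s$, expand the leftover residual in higher Jacobi modes $J_{k+s}^{-2,-2}$ and build $w_h^{(s)}$ analogously. Each stage gains one power of $h$ in the residual and requires one further order of smoothness of $u$; after $k-3$ stages the residual reaches $h^{2k-2}\|u\|_{2k+1,\infty}\|\theta\|_0$, which is \eqref{eqq:8}.

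The norm bounds on $w_h=\sum_s w_h^{(s)}$ follow by estimating each correction in terms of the corresponding Jacobi coefficients of $u$ and summing; this gives $\|w_h\|_{1,\infty}+h\|w_h\|_{2,\infty}\lesssim h^{k+1}\|u\|_{2k+1,\infty}$, while the split $\min(k+2,2k-2)$ in the $L^\infty$-bound reflects that for $k=3$ no iteration is needed (yielding $h^{k+2}$ directly), whereas for $k\ge 4$ the accumulated correction has size $h^{2k-2}$. The nodal bound $|w_h|+|\nabla w_h|\lesssim h^{2k-2}$ at $(x_i,y_j)$ follows from \eqref{eq:3}: every building block $J_p^{-2,-2}$ with $p\ge k+1$ vanishes together with its first derivative at $s=\pm 1$, so each $w_h^{(s)}$ (apart from the last residual correction used to secure global $C^1$-conformity) vanishes exactly at mesh nodes with zero gradient, and the last residual correction has magnitude at most $h^{2k-2}$.

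The main obstacle will be reconciling \emph{local} cancellation inside each element with the \emph{global} $C^1$-conformity and the vanishing boundary condition; this is exactly where the double-zero structure of $J_{k+1}^{-2,-2}$ at $\pm 1$ is crucial, and where the transverse factors $\phi$ must be selected consistently across element edges. A secondary delicate point is obtaining \eqref{eqq:8} with $\|\theta\|_0$, rather than an $H^1$-norm, on the right-hand side; this forces an additional integration by parts on the convection piece so that $\theta$ appears undifferentiated, and requires careful cancellation of the interface contributions, which relies both on the $C^1$ regularity of $w_h$ across edges and on the one-variable $C^1$ continuity of $E^xu$ and $E^yu$ furnished by Lemma~\ref{lemma:1}.
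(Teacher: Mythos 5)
Your overall philosophy --- decompose $\eta=E^xu+E^yu-E^xE^yu$, exploit the Jacobi orthogonalities to isolate a low-order residual, and kill it by an iterated correction that gains one power of $h$ per stage at the cost of one order of smoothness of $u$ --- is indeed the strategy of the paper. But the concrete construction you propose has a gap that would break the argument: your building blocks are not in the trial space. You take the first-level correction to be a combination of $J_{i,k+1}^{-2,-2}(x)\phi(y)$ and $\phi(x)J_{j,k+1}^{-2,-2}(y)$, and the later stages to use ``higher Jacobi modes $J_{k+s}^{-2,-2}$''; these polynomials have degree $k+1, k+2,\dots$ in one variable, whereas $V_h^0$ consists of piecewise $\mathbb Q_k$ functions. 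The proposition explicitly requires $w_h\in V_h^0$, and the supercloseness argument in Theorem \ref{theo:2} needs $\tilde\xi=u_I-w_h-u_h\in V_h^0$ so that \eqref{eqq:1}--\eqref{eqq:2} and the inverse inequalities apply. The double-zero property \eqref{eq:3} of $J_{k+1}^{-2,-2}$ gives you $C^1$-conformity but cannot repair the degree obstruction. This is precisely why the paper does \emph{not} subtract the high Jacobi modes of $E^xu$ directly: the leading modes $u_0=c_{i,k+1}J_{i,k+1}^{-2,-2}+c_{i,k+2}J_{i,k+2}^{-2,-2}$ (and their $y$-projected version $w_0$) enter only as \emph{data} on the right-hand side of a cascade of discrete auxiliary problems \eqref{wwx}--\eqref{wwx:1}, whose solutions $w_l$, $l=1,\dots,k-2$, are genuine $\mathbb Q_k$ elements with $\partial_x w_l$ expressed in Lobatto polynomials $\phi_{i,p}$ of degree $p\le k-1$ (Lemma \ref{lemma:3}); the correction is $w_h^x=\sum_{l=1}^{k-2}w_l$, with $w_0$ itself never added to $w_h$.

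Two secondary discrepancies. First, your mechanism for the nodal bound \eqref{esti:ww1} --- that each correction ``vanishes at mesh nodes with zero gradient because $J_{p}^{-2,-2}$, $p\ge k+1$, has double zeros at $\pm1$'' --- cannot be right once the degree issue is fixed; in the paper it follows instead from the imposed condition $\partial_x w_l(x_i,y)=0$, from $w_l(x_i,\cdot)=0$ for $l\le k-3$ (a consequence of $\partial_xw_l\perp\mathbb P_0$ and the Lobatto integral identity \eqref{lob}), and from $\|w_{k-2}\|_{0,\infty}\lesssim h^{2k-2}$. Second, the count of stages is $k-2$, not $k-3$ (the initial residual after the orthogonality reductions is $O(h^k)\|\theta\|_0$, cf.\ \eqref{e-10}, and one must descend to $O(h^{2k-2})$), and the final additive adjustment $-\frac{x-a}{b-a}w_{k-2}(b,y)-\frac{y-c}{d-c}\bar w_{k-2}(x,d)$ is needed to restore the homogeneous \emph{boundary} condition on $\partial\Omega$, not global $C^1$-conformity. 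Your instinct about needing an extra integration by parts on the convection term to land on $\|\theta\|_0$ rather than an $H^1$-norm is sound and does appear in the paper's estimates.
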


The proof of Proposition \ref{proposition:2} is given in the Appendix.

   Now we are ready to present the superconvergence of  $u_h$ at mesh nodes.

 \begin{theorem}\label{theo:2}
 Assume that $u\in W^{2k+1,\infty}(\Omega)$ is the solution of \eqref{con_laws}, and $u_h$ is the solution of \eqref{PG}.
Then 
\begin{equation}\label{averagenode}
   e_{u,n}\lesssim h^{2k-2}\|u\|_{2k+1,\infty},\ \
   e_{\nabla u,n}\lesssim  h^{2k-2}\|u\|_{2k+1,\infty},
\end{equation}
 where
\begin{eqnarray*}
   e_{v,n}=\Big(\frac {1}{{{MN}}}\sum_{i=1}^{{M-1}}\sum_{j=1}^{ {N-1}}\big(v-v_h\big)^2\big(x_{i},y_j\big)\Big)^{\frac 12},\ \ v=u,\nabla u. 
%  e_{\nabla u,n}=\Big(\frac {1}{{ MN}}\sum_{i=1}^{{M-1}}\sum_{j=1}^{{N-1}}\big(\nabla u-\nabla u_h\big)^2\big(x_{i},y_j)\Big)^{\frac 12}.
 \end{eqnarray*}
 \end {theorem}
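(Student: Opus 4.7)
The plan is to use the correction function $w_h$ from Proposition \ref{proposition:2} to produce a modified interpolant $\tilde u_I = u_I - w_h$ whose difference $\tilde\xi := \tilde u_I - u_h = \xi - w_h$ from the numerical solution is of order $h^{2k-2}$ in $L^2$ and $H^1$. Since the Petrov–Galerkin orthogonality gives $a(\tilde\xi,\theta) = -a(\eta + w_h,\theta) = -a(u - u_I + w_h,\theta)$ for all $\theta\in W_h$, Proposition \ref{proposition:2} (estimate \eqref{eqq:8}) immediately yields the key bound
\[
 |a(\tilde\xi,\theta)|\lesssim h^{2k-2}\|u\|_{2k+1,\infty}\|\theta\|_0,\qquad \forall\,\theta\in W_h.
\]

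The next step is to feed this orthogonality-type estimate into the two inequalities of Lemma \ref{lemma-1} applied with $v=\tilde\xi$. Since $\tilde\xi_{xxyy}\in W_h$, the inverse inequality gives $\|\tilde\xi_{xxyy}\|_0\lesssim h^{-1}(\|\tilde\xi_{xxy}\|_0+\|\tilde\xi_{xyy}\|_0)$, hence
\[
 |a(\tilde\xi,\tilde\xi_{xxyy})|\lesssim h^{2k-3}\|u\|_{2k+1,\infty}(\|\tilde\xi_{xxy}\|_0+\|\tilde\xi_{xyy}\|_0).
\]
Combined with \eqref{eqq:2} and Young's inequality this yields $\|\tilde\xi_{xxy}\|_0 + \|\tilde\xi_{xyy}\|_0\lesssim h^{2k-3}\|u\|_{2k+1,\infty}+\|\tilde\xi\|_0$, and $\|\triangle\tilde\xi\|_0$ is controlled by the same quantity. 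Plugging this into \eqref{eqq:1} with $\theta={\mathcal I}_h\varphi$ (using $\|{\mathcal I}_h\varphi\|_0\lesssim\|\varphi\|_2\lesssim\|\tilde\xi\|_0$ from the $H^2$-regularity of the dual problem) and absorbing the $\|\tilde\xi\|_0$ terms for small $h$ produces
\[
 \|\tilde\xi\|_0\lesssim h^{2k-2}\|u\|_{2k+1,\infty},\qquad \|\tilde\xi\|_2\lesssim h^{2k-3}\|u\|_{2k+1,\infty}.
\]
A duality argument modeled on the one used for \eqref{eq:18} in Theorem \ref{theo:1}, with the same orthogonality bound \eqref{eqq:8}, then upgrades this to $\|\nabla\tilde\xi\|_0\lesssim h^{2k-2}\|u\|_{2k+1,\infty}$.

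To finish, decompose the nodal error as
\[
 (u-u_h)(x_i,y_j) = (u-u_I)(x_i,y_j) + \tilde\xi(x_i,y_j) + w_h(x_i,y_j),
\]
and likewise for the gradient. The first term vanishes identically by Proposition \ref{proposition:1}(2), the third term is already $\mathcal O(h^{2k-2})$ uniformly in $(i,j)$ by \eqref{esti:ww1}, so only the middle term needs attention. Here I would use the standard fact that, for a finite element function $v\in V_h$ on a quasi-uniform mesh, $\sum_{i,j}v^2(x_i,y_j)\lesssim h^{-2}\|v\|_0^2$ (by summing the local inverse inequality $\|v\|_{0,\infty,\tau}^2\lesssim h^{-2}\|v\|_{0,\tau}^2$ over the bounded number of elements meeting each node), together with $MN\sim h^{-2}$, to conclude
\[
 \Big(\tfrac{1}{MN}\sum_{i,j}\tilde\xi^2(x_i,y_j)\Big)^{1/2}\lesssim \|\tilde\xi\|_0,\qquad \Big(\tfrac{1}{MN}\sum_{i,j}|\nabla\tilde\xi|^2(x_i,y_j)\Big)^{1/2}\lesssim \|\nabla\tilde\xi\|_0,
\]
and both right-hand sides are already $\mathcal O(h^{2k-2})$, giving \eqref{averagenode}.

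The main obstacle is the bootstrap in the middle paragraph: the orthogonality estimate \eqref{eqq:8} gives the correct gain $h^{2k-2}$ only when paired with $\|\theta\|_0$, so extracting a seminorm bound on $\tilde\xi_{xxy}, \tilde\xi_{xyy}$ forces going through $\tilde\xi_{xxyy}$ and using an inverse inequality that costs an $h^{-1}$; one must verify that this loss is compensated by the factor $h^4$ in \eqref{eqq:1} and the duality bound via ${\mathcal I}_h\varphi$, so that no additional power of $h$ is lost. The rest is bookkeeping with the properties of $u_I$ and $w_h$ already established.
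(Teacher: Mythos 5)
Your proposal is correct and follows essentially the same route as the paper: the corrected interpolant $\tilde u_I=u_I-w_h$, the orthogonality bound from Proposition \ref{proposition:2} fed into Lemma \ref{lemma-1} with the inverse inequality $\|\tilde\xi_{xxyy}\|_0\lesssim h^{-1}\|\tilde\xi_{xyy}\|_0$, the duality upgrade to $\|\nabla\tilde\xi\|_0\lesssim h^{2k-2}$, and the nodal decomposition using Proposition \ref{proposition:1}(2), \eqref{esti:ww1}, and the discrete-to-$L^2$ inverse estimate. The bootstrap concern you flag is resolved exactly as you anticipate, so no gap remains.
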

 \begin{proof}  
      By \eqref{eqq:1}-\eqref{eqq:2}, \eqref{orth:1} and \eqref{eqq:8},  we have 
\begin{eqnarray*}
 \|\triangle \tilde \xi\|^2_0+\|\tilde \xi_{xxy}\|^2_0+\|\tilde \xi_{xyy}\|_0^2&\lesssim& |a(\eta+w_h,\tilde\xi_{xxyy})|+|a(\eta+w_h,{\mathcal I}_h\varphi)| \\
   %  &\lesssim & h^{2k-2}\|u\|_{2k+1,\infty}\|\tilde\xi_{xxyy}\|_0 +h^{2(2k-2)}\|u\|^2_{2k+1,\infty}\\
     &\lesssim & h^{2k-3}\|u\|_{2k+1,\infty}\|\tilde\xi_{xyy}\|_0 +h^{2k-2}\|u\|_{2k+1,\infty}\|{\mathcal I}_h\varphi\|_0, 
\end{eqnarray*}
where $\varphi$ is the solution of \eqref{dual:problem} with $v=\tilde \xi$, and in the last step, we have used the inverse inequality $\|\tilde \xi_{xxyy}\|_0\lesssim h^{-1}\|\tilde \xi_{xyy}\|_0$.
 Using the inequality $\|{\mathcal I}_h\varphi\|_0\le \|\varphi\|_2 \lesssim \|\tilde\xi\|_0\lesssim  \|\triangle\tilde\xi\|_0$, we immediately get
 \[
   \|\triangle \tilde \xi\|_0+\|\tilde \xi_{xxy}\|_0+\|\tilde \xi_{xyy}\|_0\lesssim  h^{2k-3}\|u\|_{2k+1,\infty}. 
 \]
 Then we follow the same argument as what we did in Theorem \ref{theo:02} to obtain 
\[
        \|\tilde \nabla \xi\|_0 \lesssim h^{2k-2}\|u\|_{2k+1,\infty},\ \ \ 
        \|\tilde  \xi\|_0\lesssim  \|\tilde \nabla \xi\|_1\lesssim h^{2k-2}\|u\|_{2k+1,\infty}. 
\]
    By the property of $u_I$ and \eqref{esti:ww1}, we get
 \begin{eqnarray*}
   \left|(u-u_h)(x_{i},y_j)\right|&=&\left|(u_I+w_h-u_h)(x_{i},y_j)-w_h(x_{i},y_j)\right|\\
   &\le &\|\tilde \xi\|_{0,\infty,\tau_{i,j}}+h^{2k-2}\|u\|_{2k+1,\infty},  
 %   &\lesssim &h^{-1}\|\tilde\xi\|_{0,\tau_{i,j}} +h^{2k-2}\|u\|_{2k+1,\infty}.
  %&\lesssim&(1+t)h^{2k+\frac
%   12}\|u\|_{2k+2,\infty}.
\end{eqnarray*}
  and thus, 
\begin{eqnarray*}
    e_{u,n}
   % &\lesssim &\left(\frac {h^{-2}}{MN}\sum_{i=1}^M\sum_{j=1}^N\|\tilde \xi\|^2_{0,\tau_{i,j}}\right)^{\frac 12}+h^{2k-2}\|u\|_{2k+1,\infty}.\\
    &\lesssim&  \|\tilde \xi\|_0+h^{2k-2}\|u\|_{2k+1,\infty}\lesssim h^{2k-2}\|u\|_{2k+1,\infty}.
\end{eqnarray*}
  Following the same argument, we have
\begin{equation}\label{eq:17}
   e_{\nabla u,n}\lesssim \|\nabla\tilde \xi\|_0+h^{2k-2}\|u\|_{2k+1,\infty}\lesssim  h^{2k-2}\|u\|_{2k+1,\infty}.
\end{equation}
Then \eqref{averagenode} follows.  This finishes our proof. $\Box$
\end{proof}

   With the help of the correction function $w_h$, we can also improve our superocnvergence results from the average sense to 
   the point-wise sense for $k\ge 4$. 
  
\begin{theorem}\label{theo:03}
    Suppose all the conditions of Theorem \ref{theo:2} hold true. Then
  \begin{equation}\label{super:pg1}
  |(u-u_h)(x_i,y_j)|\lesssim h^{2k-2}|{\rm ln} h|^{\frac 12}\|u\|_{2k+1,\infty},\ \ 
  |(u-u_h) (P) | \lesssim  h^{k+2}\max(1,h^{k-4}{\rm ln}h^{\frac 12})\|u\|_{2k+1,\infty},
\end{equation}
\begin{eqnarray}\label{super:pg2}
 |\partial_x(u-u_h) (P_1)| + |\partial_y(u-u_h) (Q_1)| \lesssim  h^{k+1}\|u\|_{2k+1,\infty},
\end{eqnarray} 
 \begin{equation}\label{approx:super44}
   |\partial_{xx}^2(u-u_h)(P_2)|+|\partial_{yy}^2(u-u_h)(Q_2)| +|\partial_{xy}^2(u-u_h)(P_3)| \lesssim h^k\|u\|_{2k+1,\infty},
\end{equation}
  where $P\in{\mathcal R},P_1\in {\mathcal E}_x^l, Q_1\in {\mathcal E}_y^l$,  $P_2\in  {\mathcal E}_x^g, Q_2\in {\mathcal E}_y^g $ and 
$P_3\in{\mathcal L}$. 
\end{theorem}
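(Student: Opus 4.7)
The plan is to write $u-u_h=\eta+w_h+\tilde\xi$, where $\eta=u-u_I$ and $\tilde\xi=u_I-w_h-u_h$ as in the proof of Theorem 5.2, and to estimate the three pieces separately at each prescribed location. The pointwise bounds on $\eta$ are supplied directly by Proposition 4.1: $\eta$ vanishes at mesh nodes, is $O(h^{k+2})$ at the Jacobi roots $\mathcal{R}$, $\partial_x\eta$ is $O(h^{k+1})$ on Lobatto lines, and $\partial_{xx}^2\eta,\partial_{yy}^2\eta,\partial_{xy}^2\eta$ are $O(h^k)$ on Gauss lines and Lobatto points. The pointwise bounds on $w_h$ are supplied by Proposition 5.2: \eqref{esti:ww1} gives the nodal superconvergence $|w_h(x_i,y_j)|+|\nabla w_h(x_i,y_j)|\lesssim h^{2k-2}$, while \eqref{esti:ww} gives $\|w_h\|_{0,\infty}\lesssim h^{\min(k+2,2k-2)}$, $\|w_h\|_{1,\infty}\lesssim h^{k+1}$, and $\|w_h\|_{2,\infty}\lesssim h^k$. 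Thus the entire burden is to produce $L^\infty$-type bounds on $\tilde\xi$, $\nabla\tilde\xi$ and $D^2\tilde\xi$ sharp enough to match these.

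To that end, I would revisit the proof of Theorem 5.2, which actually established the $L^2$ supercloseness bounds
\[
\|\tilde\xi\|_0+\|\nabla\tilde\xi\|_0\lesssim h^{2k-2}\|u\|_{2k+1,\infty},\quad \|D^2\tilde\xi\|_0+\|\tilde\xi_{xxy}\|_0+\|\tilde\xi_{xyy}\|_0\lesssim h^{2k-3}\|u\|_{2k+1,\infty}.
\]
Two tools are then invoked: the standard inverse inequality $\|v\|_{0,\infty}\lesssim h^{-1}\|v\|_0$, and the two-dimensional discrete Sobolev inequality $\|v\|_{0,\infty}\lesssim|\ln h|^{1/2}\|v\|_1$, which holds for piecewise-polynomial $v$ on a quasi-uniform mesh. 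Applying the Sobolev inequality to $\tilde\xi$ preserves the rate and yields $\|\tilde\xi\|_{0,\infty}\lesssim h^{2k-2}|\ln h|^{1/2}$. Applying the inverse inequality to $\nabla\tilde\xi$ and $D^2\tilde\xi$ gives $\|\nabla\tilde\xi\|_{0,\infty}\lesssim h^{2k-3}$ and $\|D^2\tilde\xi\|_{0,\infty}\lesssim h^{2k-4}$, which (for $k\ge 4$) are already no larger than $h^{k+1}$ and $h^k$ respectively and therefore need no logarithmic factor.

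Each of the four claimed bounds then follows by a one-line triangle inequality on the decomposition. At a mesh node, $\eta$ vanishes, $|w_h|\lesssim h^{2k-2}$ and $|\tilde\xi|\lesssim h^{2k-2}|\ln h|^{1/2}$, giving the first estimate. At a Jacobi root $P$, summing $|\eta(P)|\lesssim h^{k+2}$, $|w_h(P)|\lesssim h^{\min(k+2,2k-2)}$ and $|\tilde\xi(P)|\lesssim h^{2k-2}|\ln h|^{1/2}$ produces exactly $h^{k+2}\max(1,h^{k-4}|\ln h|^{1/2})$, the $\max$ encoding the transition from the $\tilde\xi$-dominant regime ($k=3,4$) to the $\eta$-dominant regime ($k\ge 5$). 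On a Lobatto line, $|\partial_x\eta|+|\partial_x w_h|\lesssim h^{k+1}$ combines with $\|\nabla\tilde\xi\|_{0,\infty}\lesssim h^{2k-3}\le h^{k+1}$; and on a Gauss line the analogous combination with second derivatives produces $O(h^k)$.

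The main obstacle is precisely the sharp $L^2\to L^\infty$ conversion for $\tilde\xi$ itself: the bare inverse inequality costs a full factor of $h^{-1}$ and would drop the rate from $h^{2k-2}$ to $h^{2k-3}$, destroying the superconvergence. The discrete Sobolev inequality repairs this by trading the half-order loss for a mere $|\ln h|^{1/2}$ factor, which is exactly the logarithmic term appearing in the statement. A secondary delicacy is to verify that this logarithmic loss does not propagate to the first- and second-derivative bounds; this is automatic since the inverse inequality alone already suffices there whenever $k\ge 4$, giving the margins $2k-3\ge k+1$ and $2k-4\ge k$ with no logarithm.
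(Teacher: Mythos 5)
Your proposal is correct in substance and arrives at the same key intermediate estimate $\|\tilde\xi\|_{0,\infty}\lesssim h^{2k-2}|\ln h|^{1/2}\|u\|_{2k+1,\infty}$ as the paper, but by a genuinely different route. The paper introduces the $C^0$ space $S_h$, constructs a discrete Green's function $g_h\in S_h$ satisfying $a_e(v_h,g_h)=v_h(x,y)$, shows $\|g_h\|_1\lesssim|\ln h|^{1/2}$ via the discrete Sobolev inequality, and then bounds $\tilde\xi(x,y)=a_e(\tilde\xi,g_h)=a(\tilde\xi,g_h-R_hg_h)-a(\eta+w_h,R_hg_h)$ using the $H^2$ supercloseness $\|\tilde\xi\|_2\lesssim h^{2k-3}$ together with \eqref{eqq:8}. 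You instead apply the discrete Sobolev inequality directly to $\tilde\xi\in V_h^0\subset S_h$ and invoke the $H^1$ supercloseness $\|\tilde\xi\|_1\lesssim h^{2k-2}\|u\|_{2k+1,\infty}$ already established in the proof of Theorem \ref{theo:2}; this is shorter, dispenses with the Green's function entirely, and is legitimate since that $H^1$ bound was obtained independently by duality. What the paper's route buys is that it only needs the $H^2$ supercloseness plus the residual bound \eqref{eqq:8}, whereas yours leans on the $H^1$ duality step; what yours buys is economy. The final assembly by triangle inequality on $u-u_h=\eta+w_h+\tilde\xi$, with $\eta$ controlled by Proposition \ref{proposition:1} and $w_h$ by Proposition \ref{proposition:2}, is the same in both arguments, including the bookkeeping that produces the factor $\max(1,h^{k-4}|\ln h|^{1/2})$ at the Jacobi roots. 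One caveat worth recording: your margins $2k-3\ge k+1$ and $2k-4\ge k$ for the first- and second-derivative estimates require $k\ge4$, so \eqref{super:pg2} and \eqref{approx:super44} are not actually covered for $k=3$ by this argument; the paper's terse ``the desired results follow'' conceals exactly the same deficiency, so this is a shared gap rather than one you introduced, and you at least state the restriction explicitly.
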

 \begin{proof} 
   We first define the $C^0$-conforming finite element space $S_h$ as follows:
\[
    S_h=\{ v\in C^0(\Omega): \; v|_{\partial\Omega}=0, v|_{\tau}\in \mathbb{Q}_k(x,y)=\mathbb {P}_k(x)\times \mathbb {P}_k(y), \tau\in{\mathcal T}_h\}^{}.
\]
   We denote by $a_e(\cdot,\cdot)$ the bilinear form of the finite element method, that is,
\[
   a_e(u,v)=(\alpha \nabla u,\nabla v)+({\bf \beta}\nabla u,v)+(\gamma u,v).
\]
  Note that $a_e(u,v)$ is coercive and continuous in the $H_0^1$ space. By Lax-Milgram Lemma, there exists a $g_h\in S_h$ such that
\begin{equation}\label{eq:12}
   a_e(v_h,g_h)=v_h(x,y),\ \ \forall v_h\in S_h.
\end{equation}
 Especially, we choose $v_h=g_h$ to obtain
\begin{equation}
   \|g_h\|^2_{1}\lesssim |a_e(g_h,g_h)|=|g_h(x,y)|\le \|g_h\|_{0,\infty}.
\end{equation}
Since (cf.,[31], p.84, Theorem 2.8)
\[
   \|v_h\|_{0,\infty}\lesssim |{\rm ln} h|^{\frac 12}\|v_h\|_1,\ \ \forall v_h\in S_h,
\]
   we have
\[
    \|g_h\|_{1}\lesssim |{\rm ln} h|^{\frac 12}.
\]
   By choosing $v_h=\tilde \xi$ in \eqref{eq:12} and use the integration by parts and \eqref{eqq:8}, 
\begin{eqnarray*}
    \|\tilde \xi\|_{0,\infty}&\le&  |a_e(\tilde\xi,g_h)|
   = |a(\tilde \xi, g_h-R_hg_h)-a(\eta+w_h, R_hg_h)|\\
    &\lesssim &h \|\tilde \xi\|_2\|g_h\|_1+h^{2k-2}\|g_h\|_0\|u\|_{2k+1,\infty}
    \lesssim h^{2k-2}|{\rm ln} h|^{\frac 12}\|u\|_{2k+1,\infty}.
   %  h\|u_h\|_{2}\|g_h\|_1\lesssim h|{\rm ln} h|^{\frac 12}\|u_h\|_{2}. &
\end{eqnarray*}
  Here $R_h$ denotes the $L^2$ projection of $S_h$ onto $W_h$. %Consequently, by \eqref{esti:ww1}
%\[
%   |(u_I-u_h)(x_i,y_j)|\le  \|\tilde \xi\|_{0,\infty}+h^{2k-1}\|u\|_{2k+1,\infty}\\
 %  \lesssim  h^{2k-2}|{\rm ln} h|^{\frac 12}\|u\|_{2k+1,\infty}.
%\]
 %Furthermore, by   \eqref{esti:ww} and the triangle inequality,
%\[
%   \|u_I-u_h\|_{0,\infty}\lesssim \|\tilde \xi\|_{0,\infty}+\|w_h\|_{0,\infty}\lesssim
%   (h^{\min{(k+2,2k-2)}}+  h^{2k-2}|{\rm ln} h|^{\frac 12})\|u\|_{2k+1,\infty}.
%\]
%  For $k\ge 4$, by using  \eqref{esti:ww} and \eqref{eq:18}, 
%\begin{eqnarray*}
%   &&\|u_I-u_h\|_{1,\infty}\lesssim \|w_h\|_{1,\infty}+h^{-1}\|\tilde \xi\|_{1}\lesssim h^{k+1}\|u\|_{2k+1,\infty},\\
 %  &&\|u_I-u_h\|_{2,\infty}\lesssim h^{-1}\|u_I-u_h\|_{1,\infty}\lesssim h^{k}\|u\|_{2k+1,\infty}.
%\end{eqnarray*}
 Then the desired results \eqref{super:pg1}-\eqref{approx:super44} follow from the approximation properties of $u_I$ and the estimates of $w_h$ in Proposition \ref{proposition:2}. The proof is complete.  $\Box$
\end{proof}

\section{Interior estimates for the $C^1$ Petrov-Galerkin method}

   In this section, we study interior a priori error estimates in $H^2, H^1, L^2$-norms, which can be estimated with   an  error  in a strong norm  on a smaller domain plus 
an  error in a weaker norm over a slightly larger domain.   We begin with some preliminaries. 

Let $\Omega_0\subset\subset\Omega_1\subset\subset\Omega_2\subset\subset\cdots\subset\subset\Omega_m\subset\subset\Omega$ be separated by $d\ge c_0h$, with $\Omega_i, i\le m$ the 
rectangular domain.  For any domain $D$, we define 
\begin{eqnarray*}
    &&W^0_h(D):=\{ v\in W_h:  v|_{\partial D}=0,\  {\rm supp} \ v \subset\bar D\}^{},\\
    && V^0_h(D):=\{ v\in V_h:  v|_{\partial D}=0,\  {\rm supp} \ v \subset\bar D\}^{}.   
\end{eqnarray*}
   Define 
 \begin{equation}\label{e-11}
     \||v\||^2_{D} : = \|\triangle v\|^2_{0,D}+ \|\partial_x\partial_{yy} v\|^2_{0,D}+ \|\partial_y\partial_{xx} v\|^2_{0,D}. 
 \end{equation}
 
 Denote by $B(u,v)$  the bilinear form which is defined as 
\[
    B(u,v):=a(u,v_{xxyy}). 
\] 

\begin{lemma} Let  $\Omega_0\subset\subset\Omega'$ and $p\ge 0$ be a fixed but arbitrary integer.  
  Suppose $\bar e \in V^0_h(\Omega')$ is the solution of the problems 
 \[
    a(\bar e, \zeta)=0,\ \ \forall \zeta\in W_h(\Omega')\ {\rm or}\  B(\bar e, \theta_{xxyy})=0,\ \ \forall \theta\in V_h^0(\Omega'). 
 \]
 Then 
\begin{eqnarray}\label{e-14}
    &&\|\bar e\|_{1,\Omega_0 }\lesssim h \|\triangle \bar e\|_{0,\Omega'}+\|  \bar e \|_{-p,\Omega'},\ \ \|\bar e\|_{0,\Omega_0 }\lesssim h^2 \|\triangle \bar e\|_{0,\Omega'}+\|  \bar e \|_{-p,\Omega'},\\\label{e-15}
    &&\|\bar e \|_{2,\Omega_0} \lesssim h^{\frac 12} \|| \bar e \||_{0,\Omega'}+\|  \bar e \|_{-p,\Omega'}. 
\end{eqnarray}
 \end{lemma}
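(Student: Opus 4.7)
The plan is to adapt the duality framework of Lemma~\ref{lemma-1} to a localized setting via smooth cutoffs, in the spirit of Nitsche--Schatz interior estimates; the key difficulty is that the orthogonality assumed on $\bar e$ is only available against test functions supported in $\Omega'$. I would first introduce intermediate rectangular subdomains $\Omega_0 \subset\subset D_1 \subset\subset D_2 \subset\subset \Omega'$ separated by distance $\gtrsim d$, and fix smooth cutoffs $\omega,\chi$ with $\omega\equiv 1$ on $\Omega_0$, $\mathrm{supp}\,\omega \subset D_1$, $\chi\equiv 1$ on $D_2$, $\mathrm{supp}\,\chi \subset \Omega'$, and $|\partial^\alpha\omega|, |\partial^\alpha\chi|\lesssim d^{-|\alpha|}$. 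Since $\bar e$ extended by zero belongs to $V_h^0$, Lemma~\ref{lemma-1} remains available globally.

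For the $L^2$ part of \eqref{e-14}, I would solve the dual problem \eqref{dual:problem} with right-hand side $\omega^2 \bar e$ to obtain $\varphi \in H^2\cap H_0^1(\Omega)$ with $\|\varphi\|_2 \lesssim \|\bar e\|_{0,\Omega_0}$. Integration by parts gives $\|\omega \bar e\|_0^2 = a(\bar e,\varphi)$, and subtracting $\mathcal I_h(\chi\varphi)\in W_h(\Omega')$ via the orthogonality leaves $a(\bar e, \varphi - \mathcal I_h(\chi\varphi))$. I would split this into a local piece on $D_2$, where standard bilinear interpolation gives the $h^2\|\triangle\bar e\|_{0,\Omega'}$ contribution, and a tail piece on $\Omega'\setminus D_2$ where $\varphi$ is smooth (its source being supported in $D_1$); repeated integration by parts on the tail, combined with interior elliptic regularity, trades derivatives of $\bar e$ against high Sobolev norms of $\varphi$ and yields the $\|\bar e\|_{-p,\Omega'}$ contribution after a kick-back of $\|\omega\bar e\|_0$. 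The $H^1$ bound then follows by the analogous duality argument with vector right-hand sides $-\nabla\cdot\zeta$, exactly as in the proof of Theorem~\ref{theo:02}.

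For the $H^2$-type bound \eqref{e-15}, I would apply the inequality \eqref{eqq:2} of Lemma~\ref{lemma-1} to the localized quantity $\omega^2 \bar e$; the second orthogonality then permits a super-approximation subtraction that reduces the main term $B(\bar e, \omega^2\bar e)$ to commutators involving derivatives of $\omega$ paired with lower-order derivatives of $\bar e$ on a mesh-width strip near $\partial\Omega_0$. An inverse inequality on that boundary strip is what produces the $h^{1/2}$ factor in front of $\||\bar e\||_{0,\Omega'}$; the already-established $L^2$ bound and tail contributions absorb the remaining terms, reproducing the $\|\bar e\|_{-p,\Omega'}$ piece. The main obstacle will be precisely this super-approximation step: since $a(\cdot,\cdot)$ and especially $B(\cdot,\cdot)$ involve up to fourth-order derivatives, multiplying by the cutoff generates commutators of size $d^{-4}$ that must be balanced against an $h^4$ gain on a strip of mesh-width, and extracting the sharp $h^{1/2}$ factor rather than a loss of a full power of $h$ is the most delicate technical point.
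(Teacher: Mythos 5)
Your architecture for \eqref{e-14} (cutoff $+$ duality $+$ orthogonality against an interpolant supported in $\Omega'$) matches the paper's. The paper reaches the $\|\bar e\|_{-p,\Omega'}$ term differently: its commutator $I=(2\alpha\nabla\cdot(\nabla w\,\varphi)-\alpha\triangle w,\bar e)$ is bounded by $\|\bar e\|_{s-1,\Omega_1}\|\varphi\|_{2-s,\Omega_1}$, which lowers the Sobolev index by exactly one, and the estimate is then iterated $p$ times over a chain of nested domains $\Omega_0\subset\subset\Omega_1\subset\subset\cdots\subset\subset\Omega'$. Your one-shot alternative --- exploiting that $\varphi$ solves the homogeneous equation on the annulus where the cutoff varies, so interior regularity lets you pair $\bar e$ against arbitrarily smooth data there --- is a legitimate classical variant and should go through, though you would still need to be careful that the interpolation error $\chi\varphi-{\mathcal I}_h(\chi\varphi)$ on the annulus is handled by the $h^2\|\triangle\bar e\|$ mechanism rather than by smoothness.

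For \eqref{e-15} there is a genuine gap. You propose to apply \eqref{eqq:2} of Lemma \ref{lemma-1} to $\omega^2\bar e$ and invoke the $B$-orthogonality; but the resulting test function $(\omega^2\bar e)_{xxyy}$ is not in $W_h(\Omega')$ (the cutoff is not piecewise polynomial), so neither orthogonality applies without a fourth-order super-approximation step, and this is precisely the step you flag as unresolved. Moreover you attribute the $h^{1/2}$ to an inverse inequality on a mesh-width boundary strip; that is not where it comes from in the paper, and there is no reason such a strip argument would produce it here. The paper avoids all of this: it never localizes the triple norm $\||\cdot\||$ and never uses \eqref{eqq:2} for this bound. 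Instead it starts from the elementary coercivity $\|\triangle(w\bar e)\|_{0,\Omega_1}^2\lesssim |a(w\bar e,\triangle(w\bar e))|+\|w\bar e\|_{1,\Omega_1}^2$, moves the cutoff onto the test function, and uses only the first orthogonality to subtract some $\theta\in W_h(\Omega_1)$ from $w\triangle(w\bar e)$. The approximation estimate $\|w\triangle(w\bar e)-\theta\|_{0,\Omega_1}\lesssim h(\||\bar e\||_{\Omega_1}+\|\bar e\|_{1,\Omega_1})$ (obtained from an $h^{k-1}$ bound plus inverse inequalities) then gives $\|\triangle(w\bar e)\|_{0,\Omega_1}^2\lesssim h\,\||\bar e\||_{\Omega_1}^2+\|\bar e\|_{1,\Omega_1}^2$, and the $h^{1/2}$ appears simply upon taking the square root; the $\|\bar e\|_{1,\Omega_1}$ term is then absorbed using the already-proved \eqref{e-14}. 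You should replace your localized-\eqref{eqq:2} strategy by this second-order test-function argument, which sidesteps the fourth-order commutator problem entirely.
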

\begin{proof} We only consider the case that  $\bar  e$ satisfies $a(\bar e, \zeta)=0$ since the same argument can be applied to the case in which $B(\bar e, \theta_{xxyy})=0$. 

 For any $s\le 1, v\in H^{-s}(\Omega_1)$, denote by $\varphi\in H^{-s+2}(\Omega_1)$ the solution of \eqref{dual:problem}.  Let $w=1$ on $\Omega_0$ and $w\in C_0^{\infty}(\Omega_1)$ with $\Omega_1\subset\subset\Omega'$. 
 Then 
 \begin{eqnarray*}
     \|w\bar e\|_{s,\Omega_1}&\lesssim& \sup_{v\in H^{-s}_0(\Omega_1)}\frac{|(w\bar e, v)|}{\|v\|_{-s,\Omega_1}}=\sup_{\varphi\in H^{2-s}_0(\Omega_1)}\frac{|a(w\bar e, \varphi)|}{\|\varphi\|_{2-s,\Omega_1}}\\
     &=&\sup_{\varphi\in H^{2-s}_0(\Omega_1)}\frac{|a(\bar e, w\varphi)+I|}{\|\varphi\|_{2-s,\Omega_1}}
   = \sup_{\varphi\in H^{2-s}_0(\Omega_1)}\frac{|a(\bar e,  w\varphi-{\mathcal I}_h(w\varphi))+I|}{\|\varphi\|_{2-s,\Omega_1}}, 
 \end{eqnarray*}
  where ${\mathcal I}_h(w\varphi)\in{\mathbb Q}_1$ denotes the bilinear function of $w\varphi$,  and 
\[
  | I |=|(-\alpha \triangle w\bar e-2\alpha\nabla w\nabla \bar e,\varphi)|=|(2\alpha\nabla\cdot(\nabla w\varphi)-\alpha\triangle w, \bar e)|\lesssim \|\bar e\|_{s-1,\Omega_1}\|\varphi\|_{2-s,\Omega_1}. 
\]
  Consequently, 
\begin{eqnarray*}
 \|\bar e\|_{s,\Omega_0}&\le &   \|w\bar e\|_{s,\Omega_1}   \lesssim   \sup_{\varphi\in H^{2-s}_0(\Omega_1)}
    \frac{ h^{\min(2-s,2)}( \|\triangle \bar e\|_{0,\Omega_1}+ \| \bar e\|_{1,\Omega_1})\|\varphi\|_{2-s,\Omega_1}}{\|\varphi\|_{2-s,\Omega_1}}+\|\bar e\|_{s-1,\Omega_1}\\
     & \lesssim &  h^{\min(2-s,2)}( \|\triangle \bar e\|_{0,\Omega_1}+ \| \bar e\|_{1,\Omega_1})+\|\bar e\|_{s-1,\Omega_1}. 
\end{eqnarray*}
 Especially, by choosing $s=0$ and iterating the above inequality $p$ times, we get 
\begin{eqnarray*}
   \|\bar e\|_{0,\Omega_0}&\lesssim & h^2( \|\triangle \bar e\|_{0,\Omega_1}+ \| \bar e\|_{1,\Omega_1})+\|\bar e\|_{-1,\Omega_1}\\
   &\lesssim & h^2( \|\triangle \bar e\|_{0,\Omega_2}+ \| \bar e\|_{1,\Omega_2})+\|\bar e\|_{-2,\Omega_2}\\
   &\lesssim & h^2( \|\triangle \bar e\|_{0,\Omega_p}+ \| \bar e\|_{1,\Omega_p})+\|\bar e\|_{-p,\Omega_p}. 
\end{eqnarray*}
  Similarly, we choose $s=1$ to obtain 
 \begin{eqnarray*}
     \|\bar e\|_{1,\Omega_0}\lesssim 
     h (\|\triangle \bar e\|_{0,\Omega_{p+1}}+\| \bar e\|_{1,\Omega_{p+1}})
      +\|\bar e\|_{-p,\Omega_{p+1}}. 
 \end{eqnarray*}
 Let $\Omega_{p+1}\subset\subset\Omega_{p+2}\subset\subset\cdots\subset\subset\Omega_{2p}=\Omega'$ and iterate the above inequality $p$ times, we obtain 
\begin{eqnarray}\label{e-16}
  \|\bar e\|_{1,\Omega_0}\lesssim h \|\triangle \bar e\|_{0,\Omega_2p}
 +h^{p+1}\|\bar e\|_{1,\Omega_{2p}}+\|  \bar e \|_{-p,\Omega_{2p}}.
\end{eqnarray} 
      Then \eqref{e-14} follows by using the inverse inequality.

    We next estimate $\|\triangle e\|_{0,\Omega_0}$.  
  Recalling the definition of the bilinear form of $a(\cdot,\cdot)$, we have 
\begin{eqnarray*}
  \|\triangle (w\bar e)\|^2_{0,\Omega_1} &\lesssim & | a(w\bar e, \triangle (w\bar e))| + \|w\bar e\|^2_{1,\Omega_1}\\
       &=&|a(e, w\triangle (w\bar e))+(-\alpha \bar e \triangle w -2\alpha\nabla w\nabla \bar e, \triangle (w\bar e))| + \|w\bar e\|^2_{1,\Omega_1}\\
       &\lesssim &| a(\bar e, w\triangle (w\bar e)-\theta)|+\|w\bar e\|^2_{1,\Omega_1}\\
       &\lesssim &( \|\triangle \bar e\|_{0,\Omega_1}+ \|\bar e\|_{1,\Omega_1})\|w\triangle (w\bar e)-\theta\|_{0,\Omega_1}+\|\bar e\|^2_{1,\Omega_1},\ \ \forall \theta\in W_h(\Omega_1). 
    %   &\lesssim & 
    %    h^{k-1}  (\|\triangle e\|_{\Omega_1}   +   \|e\|_{1,\Omega_1}) \|\triangle \bar e\|_{k-1} +\|we\|^2_{1,\Omega_1}. 
 \end{eqnarray*}  
     By the standard approximation theory, there holds 
\begin{eqnarray*}
    \| w\triangle (w\bar e)-\theta\|_{0,\Omega_1} & \lesssim & h^{k-1}( \|\triangle \bar e\|_{k-1}+\|\nabla \bar e\|_{k-1}+\|\bar e\|_{k-1}) \\
    &\lesssim & h^{k-1} (\|\partial_y^{k-1}\partial_{xx}\bar e\|_{0,\Omega_1}+ \|\partial_x^{k-1}\partial_{yy}\bar e\|_{0,\Omega_1}+\|\triangle \bar e\|_{k-2,\Omega_1})\\
    &\lesssim & h(\||\bar e\||_{\Omega_1}+\|\bar e\|_{1,\Omega_1}). 
\end{eqnarray*}  
   Here in the last step,  we have used the inverse inequality. 
 Consequently, 
\[
    \| \bar e \|^2_{2,\Omega_0} \le  \| w\bar e \|^2_{2,\Omega_1} \lesssim \|\triangle (w\bar e)\|^2_{0,\Omega_1} \lesssim h \||\bar e\||^2_{\Omega_1}+ \|\bar e\|^2_{1,\Omega_1}. 
\]
 which yields ( together with \eqref{e-14}) the desired result \eqref{e-15}. The proof is complete.  $\Box$
\end{proof}

   Given any $v\in C^1(\Omega_1)\cap H^4_0(\Omega_1)$,   let  $Pv\in V_h^0(\Omega_1)$ and $P^*v\in V_h^0(\Omega_1)$ be defined as the solutions of the equations
 \begin{eqnarray}\label{e:2}
     B(v-Pv,\varphi)=0,\ \ B(\varphi, v-P^*v)=0,\ \ \forall \varphi\in V_h^0(\Omega_1). 
 \end{eqnarray}
   By the same argument as what we did in Theorem \ref{theo:4}, we can prove that $Pv$ and $P^*v$ are uniquely defined. 
   
    In light of the conclusions in Lemma \ref{lemma-1}, we easily obtain,  by   using \eqref{e:2} and the Cauchy-Schwarz inequality,  
     the integration by parts and the homogenous boundary condition $v|_{\partial\Omega_1}=0$, 
  \begin{eqnarray}\label{e:1}
 \begin{split}
    \||Pv\||_{\Omega_1}^2&\lesssim 
   B(Pv,Pv)+a(Pv,{\mathcal I}_h\varphi)  =B(v,Pv)+a(v,{\mathcal I}_h\varphi) &\\
 %  &\lesssim  \|v_{xxy}\|^2_{0,\Omega_1}+\|v_{xyy}\|^2_{0,\Omega_1}+\|v_{xy}\|^2_{0,\Omega_1}+a(v,{\mathcal I}_h\varphi)& \\
   &\lesssim \|| v\||_{\Omega_1}\||Pv\||_{\Omega_1}+\|v\|_{2,\Omega_1}\|\varphi\|_{0,\Omega_1}, &
 \end{split}
 \end{eqnarray}  
  where  $\varphi$ is the solution of \eqref {dual:problem} with $v$ replaced by $Pv$, and in the second step, we  have used the identity 
 \begin{equation}\label{e-13}
     a(v,{\mathcal I}_h\varphi)=B(v, \varphi_1)=B(Pv, \varphi_1)=a(Pv,{\mathcal I}_h\varphi)
 \end{equation}
   with $  \varphi_1\in V_h^0$ satisfying  $\partial_{xx}\partial_{yy}\varphi_1={\mathcal I}_h\varphi. $
  Using the $H^2$ regularity assumption $\|\varphi\|_{0,\Omega_1}\lesssim \|Pv\|_{0,\Omega_1}\lesssim \|\triangle Pv\|_{0,\Omega_1}$, we get 
\[
   \||Pv\||_{\Omega_1}^2\lesssim \||v\||^2_{\Omega_1}+\|v\|^2_2\lesssim   \||v\||^2_{\Omega_1}. 
\]
  Similarly, we can prove that the same result holds true for $P^*v$. 
 
 Let $w=1$ on $\Omega_0$ and $w\in C_0^{\infty}(\Omega')$  with $\Omega_0\subset\subset \Omega'$.  Set $\tilde u=wu$ and  denote $\tilde e=\tilde u-P\tilde u$.   By using   \eqref{eqq:1}-\eqref{eqq:2},  \eqref{e:2} and the integration by parts, we get 
\begin{eqnarray*}
     \||\tilde e \||_{\Omega'}^2 & \lesssim & |a(\tilde e, \tilde e_{xxyy})+a(\tilde e, {\mathcal I}_h\varphi)|=|B(\tilde e, \tilde e)|\\
     &= & |B(\tilde e, \tilde e-P^*\tilde e)|=|B(\tilde u-\tilde u_I, \tilde e-P^*\tilde e)| \\
%     &= &\int_{\Omega_1} (-\alpha \triangle + \beta\cdot\nabla +\gamma )(\tilde u-\tilde u_I)(\tilde e-P^*\tilde e)_{xxyy} dxdy\\
     &\lesssim & \||\tilde u-\tilde u_I\||_{\Omega'}\||\tilde e-P^*\tilde e\||_{\Omega'} \lesssim h^{\mu-1}\|\tilde u\|_{\mu+2,\Omega'},\ \mu\le k. 
\end{eqnarray*}
 Consequently, 
 \begin{equation}\label{e-6}
   \|| u-P\tilde u\||_{\Omega_0}\lesssim  \||\tilde e \||_{\Omega'} \lesssim h^{\mu-1}\|\tilde u\|_{\mu+2,\Omega'}. 
 \end{equation}
  We next estimate $\||P\tilde u-u_h\||_{\Omega_0}$.  
  
\begin{lemma}\label{lemma-0} 
 Assume that $\Omega_0\subset\subset\Omega'$ and $p\ge 0$ is a fixed but arbitrary integer.  Let $u_h$ be the solution of \eqref{PG}, 
  $\tilde u=wu$ with $w=1$ on $\Omega_0$ and $w\in C_0^{\infty}(\Omega')$, and $P\tilde u$ be  defined by \eqref{e:2}. 
  Then for sufficiently small $h$, 
 \begin{equation}\label{e-7}
   \||P\tilde u-u_h\||_{\Omega_0}\lesssim  \|P\tilde u-u_h\|_{-p,\Omega'}. 
 \end{equation}
 \end{lemma}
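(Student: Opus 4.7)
The plan is to establish a partial Galerkin-type orthogonality for $\bar e := P\tilde u - u_h$ on $\Omega_0$, and then to run an interior energy argument in the same spirit as the preceding lemma but adapted to the stronger norm $\||\cdot\||$. First I would observe that for any $\varphi\in V_h^0(\Omega')$,
\[
B(\bar e,\varphi) \;=\; B(P\tilde u,\varphi) - a(u_h,\varphi_{xxyy}) \;=\; B(\tilde u,\varphi) - a(u,\varphi_{xxyy}) \;=\; B((w-1)u,\varphi),
\]
where I use the defining relation \eqref{e:2} for $P\tilde u$, the fact that $\varphi_{xxyy}\in W_h$ together with Galerkin orthogonality $a(u-u_h,\cdot)=0$ on $W_h$, and the identity $\tilde u - u = (w-1)u$. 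Since $w\equiv 1$ on $\Omega_0$, the right-hand side vanishes as soon as $\operatorname{supp}\varphi\subset\bar\Omega_0$, giving the interior orthogonality $B(\bar e,\varphi)=0$ for all $\varphi\in V_h^0(\Omega_0)$.

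Next I would introduce a nested chain $\Omega_0\subset\subset\Omega_1\subset\subset\cdots\subset\subset\Omega_N=\Omega'$ with $\operatorname{dist}(\Omega_j,\Omega_{j+1})\ge c_0 h$, and a smooth cutoff $\omega$ with $\omega\equiv 1$ on $\Omega_0$ and $\operatorname{supp}\omega\subset\Omega_1$. Applying a localized version of the energy inequality \eqref{eqq:2} to $\omega\bar e$ produces
\[
\||\omega\bar e\||^2 \;\lesssim\; \bigl|a(\omega\bar e,(\omega\bar e)_{xxyy})\bigr|+\|\omega\bar e\|_0^2.
\]
Expanding $(\omega\bar e)_{xxyy}$ via Leibniz, the principal term takes the form $B(\bar e,\omega^2\bar e)$ while the remaining commutators $R$ involve strictly lower-order derivatives of $\bar e$ paired with derivatives of $\omega$. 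For the principal term I would subtract a discrete super-approximant $\varphi_h\in V_h^0(\Omega_0)$ of $\omega^2\bar e$; by the interior orthogonality above, $B(\bar e,\omega^2\bar e)=B(\bar e,\omega^2\bar e-\varphi_h)$, and the super-approximation property of the $C^1$ tensor-product space yields an extra power of $h$.

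To close the argument I would combine the bounds on $R$ and $\|\omega\bar e\|_0$ with the Caccioppoli-type interior estimates \eqref{e-14}--\eqref{e-15} already established for $H^1$ and $H^2$ norms on nested subdomains, together with inverse inequalities on the finite element spaces. Iterating this kickback procedure along the chain $\Omega_j\subset\subset\Omega_{j+1}$, each iteration gains a power of $h^{1/2}$ (or $h$) while pushing the residual $\||\bar e\||$-term onto a slightly larger domain. After at most $O(p)$ iterations the $\||\bar e\||$-contributions are absorbed into the left-hand side, leaving only the weak negative-norm term $\|\bar e\|_{-p,\Omega'}$ on the right, as desired.

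The main obstacle I expect is the careful bookkeeping of $h$-powers in the commutator expansion of $(\omega\bar e)_{xxyy}$: because $B$ contains four derivatives, terms arise in which up to three derivatives land on $\bar e$ and one on $\omega$, and the inverse-inequality accounting must be tight enough that the iteration does not lose a factor of $h$. A secondary difficulty is constructing the super-approximant $\varphi_h$: it must be $C^1$-conforming, supported in $\bar\Omega_0$ (so that the interior orthogonality applies), and must super-approximate $\omega^2\bar e$ with the correct gain. I would accomplish this via a local Hermite-type modification of the truncated Jacobi projection introduced in Section 4, suitably cut off and mollified away from $\partial\Omega_0$.
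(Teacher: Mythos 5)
Your outer skeleton coincides with the paper's: the interior orthogonality $B(\bar e,\varphi)=0$ for $\varphi\in V_h^0(\Omega_0)$ (your derivation via $B(\bar e,\varphi)=B((w-1)u,\varphi)$ is exactly the content of \eqref{e:3}), the use of the previously established interior $H^1/H^2$ estimates \eqref{e-14}--\eqref{e-15} to convert $\|\bar e\|_{2}$-terms into $h^{1/2}\||\bar e\||$ on a larger domain, and the final kickback iteration over a nested chain closed off by an inverse inequality in the negative norm. Where you genuinely diverge is the middle step. The paper does not localize the coercivity \eqref{eqq:2} to $\omega\bar e$ directly; instead it writes $\||w\bar e\||\le \||w\bar e-P(w\bar e)\||+\||P(w\bar e)\||$, bounds the first piece by a C\'ea-type argument against the truncated Jacobi interpolant $(w\bar e)_I$ (giving $h\|\bar e\|_{2,\Omega_1}$), and bounds the second by the duality/orthogonality argument of \eqref{e:1}. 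The point of this extra projection is that the cutoff then only ever has to be commuted through the \emph{second-order} operator $L$ in the trial slot, producing the benign commutator $I=-\alpha\bar e\,\triangle w-2\alpha\nabla w\cdot\nabla\bar e$, which after two integrations by parts is controlled by $\|w\bar e\|_2\,\||P(w\bar e)\||$.

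The gap in your route is precisely the obstacle you flag and do not resolve. Commuting $\omega$ through $\partial_{xx}\partial_{yy}$ to pass from $\omega(\omega\bar e)_{xxyy}$ to $(\omega^2\bar e)_{xxyy}$ generates terms pairing $L\bar e$ (two derivatives of $\bar e$) against three derivatives of $\bar e$ times $\nabla\omega$, and among these appear pure third derivatives such as $\bar e_{xxx}$ that are \emph{not} controlled by $\||\cdot\||$ (which contains only $\triangle\bar e$, $\bar e_{xxy}$, $\bar e_{xyy}$). The only generic bound, $\|\bar e_{xxx}\|\lesssim h^{-1}\|\bar e\|_2$, loses exactly the power of $h$ that the kickback needs, so the iteration does not close without a further structural integration by parts that you would have to exhibit (and which is delicate here, since $\bar e_{xx}$, $\bar e_{yy}$ jump across edges). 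There is also a domain mismatch: your superapproximant must lie in $V_h^0(\Omega_0)$ for the interior orthogonality to apply, yet $\omega^2\bar e$ is supported in $\Omega_1\supset\supset\Omega_0$; you would need to take $\omega$ supported in $\Omega_0$ and equal to $1$ on a strictly smaller set, relabelling the chain accordingly. Both issues are repairable in principle, but as written the decisive estimate $\||\bar e\||_{\Omega_0}\lesssim h^{1/2}\||\bar e\||_{\Omega'}+\|\bar e\|_{-p,\Omega'}$ is asserted rather than proved; the paper's introduction of $P(w\bar e)$ is exactly the device that makes it provable without confronting the fourth-order commutator.
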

\begin{proof} First, we note that 
\[
   B(u-u_h,v)=0,\ \ \forall v\in V_h^0(\Omega_0), 
\]
    which yields (together with  \eqref{e:2}) 
 \begin{equation}\label{e:3}
   B(u_h-P\tilde u,v)=a( u_h-P\tilde u,v_{xxyy})=0,\ \ \forall v\in V_h^0(\Omega_0). 
\end{equation}
  Let $\bar e=u_h- P\tilde u$.  Then 
 \begin{equation}\label{e-2}
    \||\bar e\||_{\Omega_0}\le \||w\bar e\||_{\Omega_1} \le \|| (w \bar e)-P(w \bar e) \||_{\Omega_1}+\|| P(w \bar e) \||_{\Omega_1}. 
     %a(w\bar e, (w\bar e)_{xxyy})+ a(w\bar e, {\mathcal I}_h\varphi)
 \end{equation}
   As for $\|| (w \bar e)-P(w \bar e) \||_{\Omega_1}$, we have from \eqref{eqq:1}-\eqref{eqq:2}, \eqref{e:2}  and the integration by parts that 
  \begin{eqnarray*}
   \|| (w \bar e)-P(w \bar e) \||^2_{\Omega_1}&\lesssim & B((w \bar e)-P(w \bar e), (w \bar e)-P(w \bar e))\\
     &= & B((w \bar e)-P(w \bar e), (w \bar e)-(w \bar e)_I)\\
     &\lesssim &   \|| (w \bar e)-P(w \bar e) \||_{\Omega_1}\|| (w \bar e)-(w \bar e)_I \||_{\Omega_1}. 
  \end{eqnarray*}
     Here $(w \bar e)_I$ denotes the  truncated Jacobi projection of $ w\bar e$. 
   From the property of $u_I$ in  Lemma \ref{lemma:1}, we derive  
 \begin{eqnarray*}
    \||u-u_I \||_{\Omega_1} \lesssim   \|(u-u_I )_{xxy}\|_{0,\Omega_1}+\|(u-u_I )_{xyy}\|_{0,\Omega_1} \lesssim h^{k-1}\|u\|_{k+2,\Omega_1}.
 %  & \lesssim & h^{k-1}(\|\partial_x^{k+1} \partial_y u\|_{0,\Omega_1}+\|\partial_y^{k+1} \partial_x u\|_{0,\Omega_1}+\|\partial_x^{k} \partial_{yy} u\|_{0,\Omega_1}+\|\partial_y^{k+1} \partial_x u\|_{0,\Omega_1} )
 \end{eqnarray*}
   Consequently, 
 \begin{eqnarray}\label{e-3}
 \begin{split}
      \|| (w \bar e)-P(w \bar e) \||_{\Omega_1}   &\lesssim    \|| (w \bar e)-(w \bar e)_I \||_{\Omega_1}\lesssim h^{k-1}\|w \bar e\|_{k+2,\Omega_1} & \\
        &\lesssim   h^{k-1}\|\bar e\|_{k,\Omega_1}\lesssim h ||\bar e\|_{2, \Omega_1}.  &
  \end{split}
 \end{eqnarray}
    Here in the last step, we have used the inverse inequality $\|\bar e\|_{k,\Omega_1}\lesssim h^{2-k}\|\bar e\|_{2,\Omega_1}.$
  
   Let $\varphi$ be the solution of \eqref{dual:problem} with $v=P(w\bar e)$.   Following the same argument as what we did in \eqref{e:1}, we derive 
 \begin{eqnarray*}
  \||P(w\bar e)\||_{\Omega_1}^2 &\lesssim & \left| B(w\bar e,P(w\bar e))+a(w\bar e,{\mathcal I}_h\varphi)\right|\\
  &=& \left|  B(\bar e,wP(w\bar e))+a(\bar e, w{\mathcal I}_h\varphi)+I \right|\\
  &=& \left| B(\bar e,wP(w\bar e) -(wP(w\bar e))_I)+a(\bar e, w{\mathcal I}_h\varphi-{\mathcal I}_h(w{\mathcal I}_h\varphi))+I\right|\\
   &\lesssim &   h \||\bar e\||_{0,\Omega_1}(\|wP(w\bar e)\|_{2,\Omega_1}+\|w{\mathcal I}_h\varphi\|_{1,\Omega_1})+|I|, 
  \end{eqnarray*}
  where in the third step and last step,  we have used \eqref{e:3} and the integration by parts, respectively, and 
 \begin{eqnarray*}
    I =\int_{\Omega_1} \left(-\alpha \triangle w\bar e-2\alpha\nabla w\nabla \bar e\right)\big( (P(w \bar e))_{xxyy}+{\mathcal I}_h\varphi\big)dxdy.
 \end{eqnarray*}
   Again we use the integration by parts and the Cauchy-Schwarz inequality to obtain 
  \[
     |I|\lesssim \|  w\bar e\|_2( \||P(w \bar e)\||_{\Omega_1}+\|\varphi\|_{0,\Omega_1}), 
  \]
    which yields, together with the $H^2$ regularity  $\|\varphi\|_{2,\Omega_1}\lesssim  \|P(w \bar e)\|_{0,\Omega_1}\lesssim  \||P(w \bar e)\||_{\Omega_1}$, 
\begin{eqnarray}\label{e-4}
   \||P(w\bar e)\||_{\Omega_1} \lesssim h\||\bar e\||_{\Omega_1}+\|\bar e \|_{2,\Omega_1}. 
\end{eqnarray}
   Substituting \eqref{e-3}-\eqref{e-4}   into \eqref{e-2}  and using \eqref{e-15} with $\Omega_0,\Omega'$ replaced by $\Omega_1,\Omega_2$, we get
\[
   \||\bar e\||_{\Omega_0}\lesssim h^{\frac 12}\||\bar e\||_{\Omega_2}+\|\bar e \|_{-p,\Omega_2}\lesssim h\||\bar e\||_{\Omega_3}+\|\bar e \|_{-p,\Omega_3}. 
\]
  By integrating  the above inequality $p+2$ times and using the inverse inequality again, we obtain 
\begin{eqnarray*}
   \||\bar e\||_{\Omega_0}
   \lesssim  h^{p+3}\||\bar e\||_{0,\Omega_{p+5}}+\|  \bar e \|_{-p,\Omega_{p+5}}\lesssim \|  \bar e \|_{-p,\Omega'}. 
\end{eqnarray*} 
      This finishes our proof.  $\Box$
  \end{proof}

   Now we are ready to present our interior estimates in all $H^2, H^1, L^2$-norms
    
\begin{theorem}
   Let $\Omega_0\subset\subset\Omega_1\subset\subset\Omega$, $u\in H^{l}(\Omega_1)$ and $u_h$ be the solutions of \eqref{con_laws} and  \eqref{PG}, respectively.  
   Suppose that $p\ge 0$ is a fixed but arbitrary interger. Then for $\mu\le \min(k,l-2)$, 
\begin{eqnarray}\label{e-9}
   &&  \||u-u_h\||_{\Omega_0} \lesssim h^{\mu-1} \|u\|_{\mu+2,\Omega_1}+  \| u-u_h\|_{-p,\Omega_1},\\   \label{ee-1}
   && \|u-u_h\|_{1,\Omega_0} \lesssim  h^{\mu} (\|u\|_{\mu+2,\Omega_1}+\|u\|_{1,\Omega})+  \| u-u_h\|_{-p,\Omega_1},\\\label{ee-2}
   && \|u-u_h\|_{0,\Omega_0} \lesssim  h^{\mu+1} (\|u\|_{\mu+2,\Omega_1}+\|u\|_{1,\Omega})+  \| u-u_h\|_{-p,\Omega_1}.
\end{eqnarray}     
   Furthermore, if $u\in H^3(\Omega)\cap H^{k+2}(\Omega_1)$, there hold the following optimal   interior estimates: 
\begin{equation}\label{e-8}
    \|u-u_h\|_{0,\Omega_0}+ h\|u-u_h\|_{1,\Omega_0} +h^2\||u-u_h\||_{\Omega_0} \lesssim h^{k+1} ( \|u\|_{k+2,\Omega_1}+  \| u\|_{3,\Omega}). 
\end{equation}
\end{theorem}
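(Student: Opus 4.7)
The plan is to combine the projection bound \eqref{e-6} with the interior estimates of Lemma~\ref{lemma-0} and \eqref{e-14}--\eqref{e-15}. First, I would introduce intermediate nested domains $\Omega_0 \subset\subset \tilde\Omega \subset\subset \Omega' \subset\subset \Omega_1$ and a cutoff $w \in C_0^\infty(\Omega')$ with $w \equiv 1$ on $\tilde\Omega$, so that $\tilde u := wu$ agrees with $u$ on $\tilde\Omega$. Let $P\tilde u \in V_h^0(\Omega')$ be the projection from \eqref{e:2} and write, on $\Omega_0$,
\[
u - u_h \;=\; (u - P\tilde u) \;+\; (P\tilde u - u_h).
\]
The first piece is controlled directly by \eqref{e-6}: since $w$ is smooth, $\||u - P\tilde u\||_{\Omega_0} \lesssim h^{\mu-1}\|\tilde u\|_{\mu+2,\Omega'} \lesssim h^{\mu-1}\|u\|_{\mu+2,\Omega_1}$.

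For the second piece $\bar e := P\tilde u - u_h \in V_h^0(\Omega')$, Galerkin orthogonality $B(u-u_h,\theta)=0$ combined with the defining identity of $P\tilde u$ yields $B(\bar e,\theta)=0$ for every $\theta \in V_h^0(\tilde\Omega)$. Then Lemma~\ref{lemma-0} applied with the pair $(\Omega_0,\tilde\Omega)$ gives $\||\bar e\||_{\Omega_0} \lesssim \|\bar e\|_{-p,\tilde\Omega}$. Splitting the negative norm and using $\tilde u = u$ on $\tilde\Omega$,
\[
\|\bar e\|_{-p,\tilde\Omega} \;\le\; \|P\tilde u - \tilde u\|_{-p,\tilde\Omega} + \|u - u_h\|_{-p,\tilde\Omega} \;\lesssim\; h^{\mu+1}\|u\|_{\mu+2,\Omega_1} + \|u-u_h\|_{-p,\Omega_1},
\]
where the first term is a standard Aubin--Nitsche bound for the projection $P$. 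Combining these pieces yields \eqref{e-9}. For the weaker-norm bounds \eqref{ee-1} and \eqref{ee-2}, I would handle $u - P\tilde u$ in $H^1$ and $L^2$ by inserting the Jacobi interpolant $(\tilde u)_I$ and invoking Lemma~\ref{lemma:1}, and treat $\bar e$ via \eqref{e-14} instead of Lemma~\ref{lemma-0}, which supplies the extra factors $h$ and $h^2$. The term $\|u\|_{1,\Omega}$ in \eqref{ee-1}--\eqref{ee-2} arises from commutators $[\nabla,w]u$ produced when the cutoff is moved across derivatives in the bookkeeping of the projection error.

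Finally, the optimal estimate \eqref{e-8} follows by taking $\mu = k$ in \eqref{e-9}--\eqref{ee-2} and eliminating the residual $\|u - u_h\|_{-p,\Omega_1}$ through a global duality argument: the global $H^2$ regularity of the adjoint problem \eqref{dual:problem}, together with Theorem~\ref{theo:1}, lets one iterate Aubin--Nitsche duality $p$ times and obtain $\|u-u_h\|_{-p,\Omega_1} \lesssim h^{k+1}\|u\|_{3,\Omega}$, even though only $H^3$ global regularity is assumed.

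The main obstacle I expect is precisely this last step. The direct $L^2$ estimate from Theorem~\ref{theo:1} only yields $h^2\|u\|_3$, so one must iterate the duality carefully—each iteration trades additional smoothness on the adjoint solution against the negative-norm index, and one must verify that no step demands more than $H^3$ globally of $u$. A secondary technical nuisance is the cutoff bookkeeping: the commutator terms $(\alpha\triangle w)u$ and $2\alpha\nabla w \cdot \nabla u$ appearing when $w$ is moved inside $B(\cdot,\cdot)$ must be tracked explicitly and absorbed into the $\|u\|_{1,\Omega}$ contribution in the weaker-norm estimates.
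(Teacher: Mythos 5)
Your treatment of \eqref{e-9} is essentially the paper's: the same splitting $u-u_h=(u-P\tilde u)+(P\tilde u-u_h)$, with \eqref{e-6} for the first piece and Lemma \ref{lemma-0} for the second. One caveat: your bound $\|P\tilde u-\tilde u\|_{-p}\lesssim h^{\mu+1}\|u\|_{\mu+2}$ is asserted as a ``standard Aubin--Nitsche bound,'' but $P$ is a nonstandard projection defined through $B(v,\varphi)=a(v,\varphi_{xxyy})$ and no negative-norm (or even $L^2$) estimate for it has been established; the paper sidesteps this by simply using $\|P\tilde u-\tilde u\|_{-p}\le\|P\tilde u-\tilde u\|_{1}\lesssim\||P\tilde u-\tilde u\||_{\Omega'}\lesssim h^{\mu-1}\|u\|_{\mu+2,\Omega_1}$, which suffices because the leading term of \eqref{e-9} is already $h^{\mu-1}$. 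The over-claim is harmless there, but it becomes a real gap in your plan for \eqref{ee-1}--\eqref{ee-2}: to gain the extra factors $h$ and $h^2$ you would need improved $H^1$ and $L^2$ bounds for $u-P\tilde u$, and inserting the Jacobi interpolant only handles $u-u_I$ (Lemma \ref{lemma:1}); the discrete remainder $u_I-P\tilde u$ is not covered by \eqref{e-14} (it is not ``discrete-harmonic'' for $B$) and would require a separate duality argument that you do not supply. The paper avoids this entirely: it applies the interior negative-norm lifting of \eqref{e-16} directly to $e=u-u_h$, which satisfies $a(e,\zeta)=0$ for all $\zeta\in W_h$, and then closes the resulting terms $h\|\triangle e\|_{0,\Omega'}$ (by \eqref{e-9}) and $h^{p+1}\|e\|_{1,\Omega'}$ (by the global estimate \eqref{optimal:1}). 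This is also where the global term $\|u\|_{1,\Omega}$ in \eqref{ee-1}--\eqref{ee-2} actually originates --- from the global norm of the error surviving the iteration --- not from cutoff commutators $[\nabla,w]u$ as you suggest.

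For \eqref{e-8} you correctly identify the negative-norm term as the crux, but your claim that iterated Aubin--Nitsche yields $\|u-u_h\|_{-p,\Omega_1}\lesssim h^{k+1}\|u\|_{3,\Omega}$ is not substantiated: each duality step gains only as much as the test space $W_h$ (degree $k-2$) can approximate the smooth adjoint solution, so the total gain over $\|e\|_{2,\Omega}$ is capped at order $h^{k-1}$. This is exactly what the paper's own one-shot duality delivers, $\|e\|_{-p,\Omega}\lesssim h^{\min(k-1,p)}\|e\|_{2,\Omega}\lesssim h^{\min(k-1,p)}\|u\|_{3,\Omega}$, and you should note that even the paper's computation leaves a visible mismatch with the $h^{k+1}$ rate asserted in \eqref{e-8} for the $L^2$ part. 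In short: your architecture for \eqref{e-9} matches the paper; your route to \eqref{ee-1}--\eqref{ee-2} has a genuine gap (the missing improved estimates for $u-P\tilde u$); and the final duality step needs an honest accounting of the attainable negative-norm order rather than an appeal to iteration.
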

\begin{proof}
 Let $\Omega'\subset\subset\Omega_1$.  As a direct consequence of \eqref{e-6}-\eqref{e-7},  
\begin{eqnarray*}
    \||u-u_h\||_{\Omega_0}
     &\lesssim & h^{\mu-1} \|u\|_{\mu+2,\Omega_1}+ \|P\tilde u-u_h\|_{-p,\Omega'}\\
     &\lesssim & h^{\mu-1} \|u\|_{\mu+2,\Omega_1}+ \|P\tilde u-u\|_{-p,\Omega'}+ \| u-u_h\|_{-p,\Omega_1}\\
      &\lesssim & h^{\mu-1} \|u\|_{\mu+2,\Omega_1}+ \|P\tilde u-\tilde u\|_{1,\Omega'}+ \| u-u_h\|_{-p,\Omega_1}.
\end{eqnarray*}
  Replacing $\Omega_0,\Omega'$ by $\Omega', \Omega_1$ in \eqref{e-6}    yields 
\[
   \|P\tilde u-\tilde u\|_{1,\Omega'}\lesssim \||P\tilde u-\tilde u\||_{\Omega'}\lesssim h^{\mu-1} \|u\|_{\mu+2,\Omega_1}. 
\] 
  Then the desired result  \eqref{e-9} follows. 
  
  Note that $a(e,\zeta)=0$ for all $\zeta\in W_h$. Following the same argument as what we did in \eqref{e-16}, we have 
 \begin{eqnarray*}
   && \|e\|_{1,\Omega_0}\lesssim h \|\triangle  e\|_{0,\Omega'}
 +h^{p+1}\| e\|_{1,\Omega'}+\|  e \|_{-p,\Omega'},\\
 && \|e\|_{0,\Omega_0}\lesssim h^2 \|\triangle  e\|_{0,\Omega'}
 +h^{p+1}\| e\|_{1,\Omega'}+\|  e \|_{-p,\Omega'}.
 \end{eqnarray*}
   Then \eqref{ee-1} and \eqref{ee-2} follows from \eqref{optimal:1}.

   As for the term $\|u-u_h\|_{-p,\Omega_1}$, we  first suppose $\varphi$ is the solution of the problem \eqref{dual:problem}
  and $ \|\varphi\|_{p+2,\Omega}\lesssim \|v\|_{p,\Omega}. $
    Then from the integration by parts, 
 \begin{eqnarray*}
    \|e\|_{-p,\Omega_1} \le \|e\|_{-p,\Omega}&=&\sup\limits_{v\in C_{0}^{\infty} (\Omega)}\frac{|(e,v)|}{\|v\|_{p,\Omega}}=\sup\limits_{\varphi\in C_{0}^{\infty} (\Omega)}\frac{|a(e,\varphi)|}{\|\varphi\|_{p+2,\Omega}}\\ 
     &=&\sup\limits_{\varphi\in C_{0}^{\infty} (\Omega)}\frac{|a(e,\varphi-{\mathcal I}_{k-2}\varphi)|}{\|\varphi\|_{p+2,\Omega}}
     \lesssim h^{\min(k-1, p)}\|e\|_{2,\Omega}. 
 \end{eqnarray*}
   Substituting the estimate of $\|e\|_{-p,\Omega_1}$ and \eqref{optimal:1}  into \eqref{e-9}-\eqref{ee-2}, we obtain 
  \eqref{e-8} directly.  The proof is complete.  $\Box$
 \end{proof}

\begin{remark}
   The interior error estimates in \eqref{e-9}-\eqref{ee-2} indicates that  errors in the $H^2, H^1, L^2$-norms  over any compact subdomain $\Omega_0$ of $\Omega_1$ 
   may be estimated with  an almost optimal order of accuracy that is possible locally for the  subspace $V_h$ plus an error in a much weak norm $H^{-p}(\Omega)$. 
   Just as pointed out in \cite{c5}, the significance of the negative norm is that, under some very important circumstances, one can prove high order convergence rate in
    negative norms with relatively less  requirements on the  global smoothness of $u$. 
 \end{remark}

Following the same arguments, we can also obtain interior estimates for the error  $u_h-u_I$ in $H^2, L^2, H^1$-norms. 
For simplicity, we discuss only the interior error  $\||u_I-u_h\||_{\Omega_0}$.  Similar  argument   can be applied to estimating other norms by some tedious 
calculations. 

Note that 
 \begin{equation}
   B(u_I-u_h,v)=a( u_I-u,v_{xxyy}),\ \ \forall v\in V_h^0(\Omega_0). 
\end{equation}
  As we may observe, the only difference between the above equation and \eqref{e:3} lies in the right hand side. 
  Following the same argument as what we  did in Lemma \ref{lemma-0} and choosing $\mu=k+1$ in \eqref{e-10},  we get 
 \begin{eqnarray*}
      \||u_I-u_h\||_{\Omega_0} & \lesssim &  \|u_I-u_h\|_{-p,\Omega_1}+h^{k}\|u\|_{k+3,\Omega_1}\\
         & \lesssim &  \|u_I-u\|_{-p,\Omega_1}+h^{k}\|u\|_{k+3,\Omega_1}+\|e\|_{-p,\Omega_1}.  
 \end{eqnarray*}
   Using the error decomposition of $u-u_I$ and the properties of $u_I$ in Lemma \ref{lemma:1},  we have 
 \begin{eqnarray*}
    \||\eta\|_{-p,\Omega}=\sup\limits_{v\in C_{0}^{\infty} (\Omega)}\frac{|(\eta,v)|}{\|v\|_{p,\Omega}}=\sup\limits_{\varphi\in C_{0}^{\infty} (\Omega)}\frac{|a(\eta,\varphi)|}{\|\varphi\|_{p+2,\Omega}}
         \lesssim h^{\min(k-1, p)}\|\eta\|_{2,\Omega}. 
 \end{eqnarray*}
  Therefore, by choosing $p=k-1$ and using the error estimates  $\|\eta\|_2+\|e\|_2\lesssim h\|u\|_{3,\Omega}$, 
\[
   \||u_I-u_h\||_{\Omega_0} \lesssim h^k (\|u\|_{k+3,\Omega_1}+\|u\|_{3,\Omega}), 
\] 
 which indicates a superconvergence result for the interior error  $\||u_I-u_h\||_{\Omega_0}$.

\section{Numerical experiments}

In this section, we present some numerical examples to verify our theoretical  findings in previous sections.

%Given an exact function $u(x,y),$ we first test the superconvergence of the maximum error between
%$u(x,y)$ and its Jacobi truncation interpolation $u_I(x,y)$ at some particular points and lines, which are defined as
%\begin{equation*}
%\begin{array}{rl}
%&\eta_u = \max\limits_{P \in \mathcal R} |(u-u_I)(P)|\\
%&\eta_{\nabla u} = \max\limits_{P_1 \in {\mathcal E}_x^l} |\partial_x (u-u_I)(P_1)| + \max\limits_{Q_1 \in {\mathcal E}_y^l} |\partial_x (u-u_I)(Q_1)| \\
%&\eta_{\Delta u} = \max\limits_{P_2 \in {\mathcal E}_x^g} |\partial^2_{xx} (u-u_I)(P_2)| + \max\limits_{Q_2 \in {\mathcal E}_y^g} |\partial^2_{yy} (u-u_I)(Q_2)|
%+ \max\limits_{P_3 \in \mathcal L} |\partial^2_{xy} (u-u_I)(P_3)|.
%\end{array}
%\end{equation*}

In our experiments, we adopt the $C^1$ Petrov-Galerkin method \eqref{PG}
for the convection-diffusion equation \eqref{con_laws} with $k=3,4,5$, respectively.
We test various errors for $u-u_h$, including $e_{u,n}$  and $e_{\nabla u,n}$ defined in Theorem \ref{theo:2}, the maximum error on roots of $J^{-2,-2}_{k+1}(x)J^{-2,-2}_{k+1}(y)$, the derivative error on the Lobatto lines,
and the second order  derivative error on  the Gauss lines and Lobatto points,
 which are defined as:
\begin{equation*}
\begin{array}{rl}
&e_u = \max\limits_{P \in \mathcal R} |(u-u_h)(P)|,\\
&e_{\nabla u} = \max\limits_{P_1 \in {\mathcal E}_x^l} |\partial_x (u-u_h)(P_1)| + \max\limits_{Q_1 \in {\mathcal E}_y^l} |\partial_x (u-u_h)(Q_1)|, \\
&e_{\Delta u} = \max\limits_{P_2 \in {\mathcal E}_x^g} |\partial^2_{xx} (u-u_h)(P_2)| + \max\limits_{Q_2 \in {\mathcal E}_y^g} |\partial^2_{yy} (u-u_h)(Q_2)|
+ \max\limits_{P_3 \in \mathcal L} |\partial^2_{xy} (u-u_h)(P_3)|.
\end{array}
\end{equation*}
%Here ${\mathcal E}_x^l, {\mathcal E}_y^l$ denotes the set of Lobatto lines along $x$-direction and $y$-direction, respectively; and
%$P_2 \in {\mathcal E}_x^g, P_2 \in {\mathcal E}_y^g$ is the set of Gauss lines along $x$-direction and $y$-direction; and $\mathcal L, \mathcal R$
%separately denotes the set
%of the Lobatto points and zeros of $J^{-2,-2}_{k+1}(x)J^{-2,-2}_{k+1}(y)$ on the whole domain.
We obtain our meshes by dividing the domain into $M\times N$
 rectangles, which is generated by randomly and independently perturbing each node in the $x$ and $y$ axes of a uniform mesh as
$$x_i = \frac{i}{M} + \varepsilon \frac{1}{M} \sin(\frac{i\pi}{M})\text{randn}(),\quad 0\leq i\leq M,$$
$$y_j = \frac{j}{N} + \varepsilon \frac{1}{N} \sin(\frac{j\pi}{N})\text{randn}(),\quad 0\leq j\leq N,$$
where $\text{randn()}$ returns a uniformly distributed random number in $(0,1).$
If not otherwise stated, we choose $M=N$ and $\varepsilon = 0.001$.

{\it Example 1:}  We consider the problem \eqref{con_laws}
 and take the constant coefficients  as
\[
 \alpha=\gamma=1, \quad {\bf \beta}=(1,1).
\]
The right-hand side function $f(x,y)$ is chosen such that
 the exact solution  is
$$u(x,y) = \sin(\pi x)\sin(\pi y).$$

In Figure \ref{const1_coefficient}, we show error curves of various approximation  errors calculated from the $C^1$ Petrov-Galerkin method for $k=3, 4, 5$, respectively. 
We observe that both convergence rates for the function value error (i.e., $e_{u,n}$) and the first-order derivative error (i.e., $e_{\nabla u, n}$)  at mesh nodes can reach as high as $h^{2k-2}$. As for the errors $e_{u}$ (i.e., the function value error  at roots of the Jacobi polynomial $J^{-2,-2}_{k+1}(x)J^{-2,-2}_{k+1}(y)$),
 $e_{\nabla u}$ (i.e., the  first-order derivative error on  the Lobatto lines), $e_{\Delta u}$ (i.e., the second-order derivative error on    the  Gauss lines and Lobatto points),  convergence  rates are $h^{k+2}$, $h^{k+1}$, $h^k$, respectively. They are all
consistent with error bounds established  in Theorems \ref{theo:2}-\ref{theo:03}.
We also test the supercloseness between the $C^1$ Petrov-Galerkin solution $u_h$ and the Jacobi projection $u_I$.
%we  also  present  in Table \ref{uhuI_exam1} the error $u_h-u_I$ in all $L^2$, $H^1$ and $H^2$-norms. As we may observe,
As expected, the convergence  rates for errors $\|u_h-u_I\|_0, \|u_h-u_I\|_1, \|u_h-u_I\|_2$    are $h^{\min\{k+2,2k-2\}},$ $h^{k+1}$, $h^k$, respectively.
These results verify our theoretical findings \eqref{e-18} in Theorem \ref{theo:02}.

\begin{figure}[H]
\centering
\subfigure{
\begin{minipage}[t]{0.33\linewidth}
\centering
\includegraphics[width=1.6in,height=1.4in]{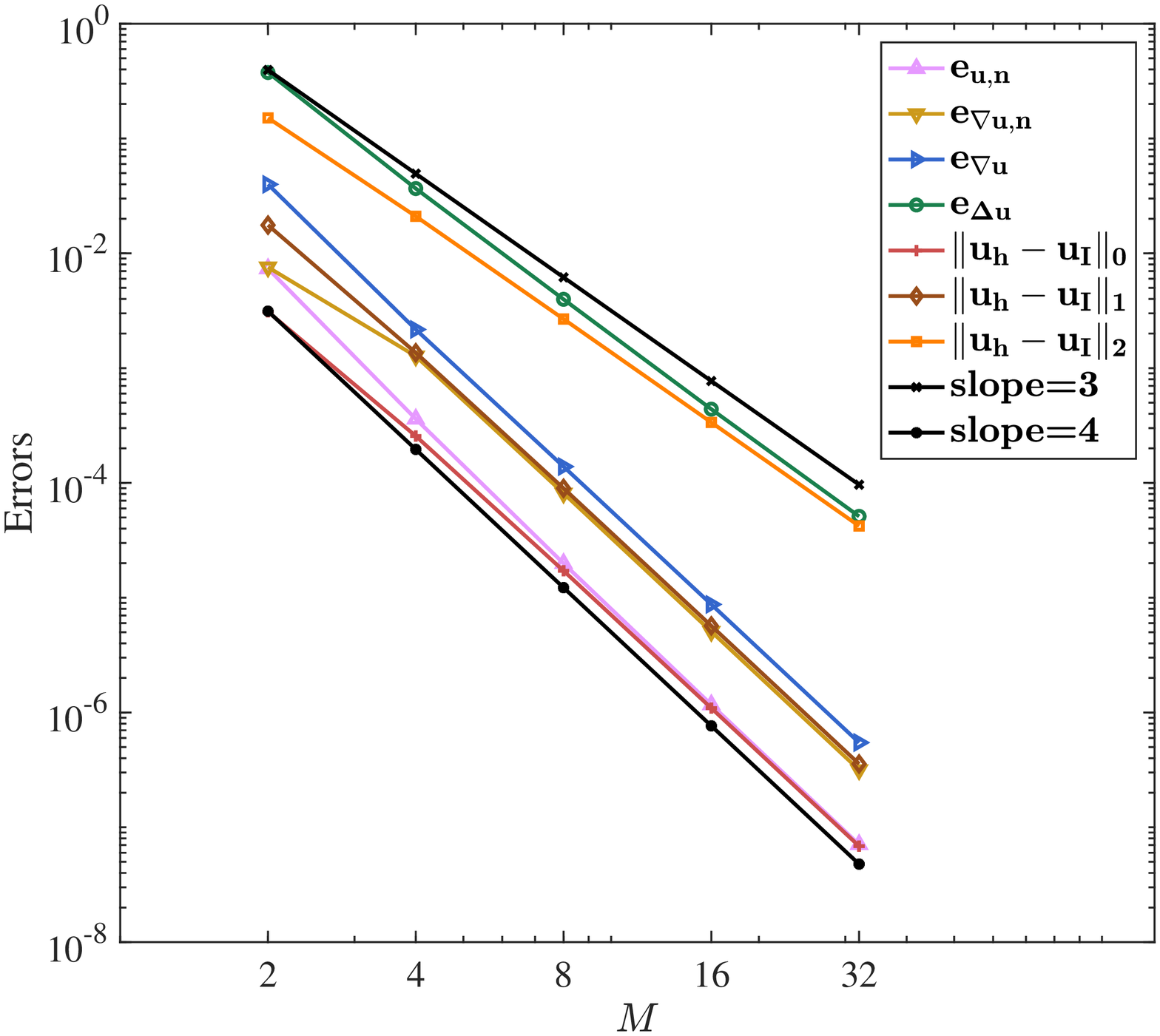}
\end{minipage}%
}%
\subfigure{
\begin{minipage}[t]{0.33\linewidth}
\centering
\includegraphics[width=1.6in,height=1.4in]{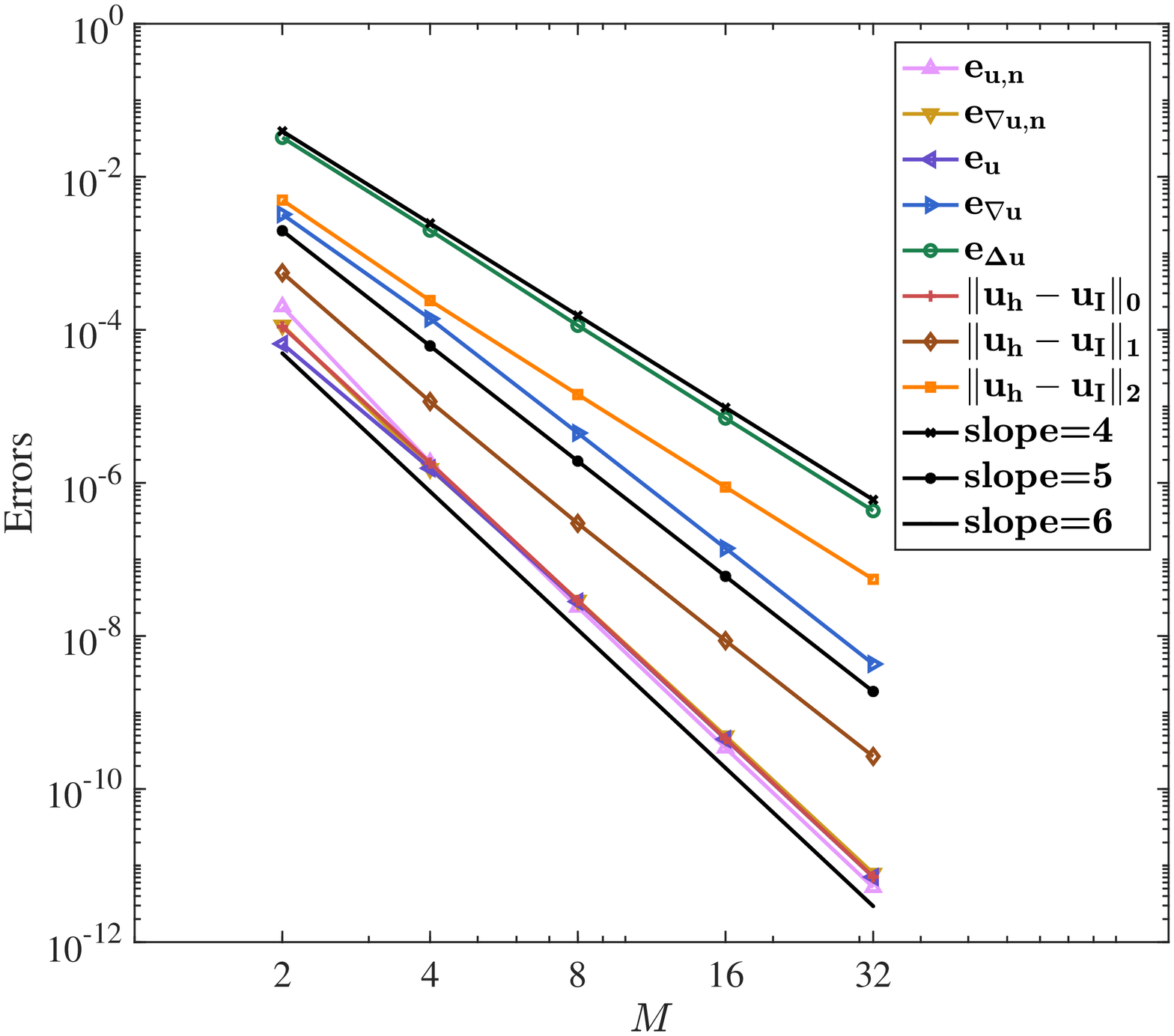}
\end{minipage}%
}%
\subfigure{
\begin{minipage}[t]{0.33\linewidth}
\centering
\includegraphics[width=1.6in,height=1.4in]{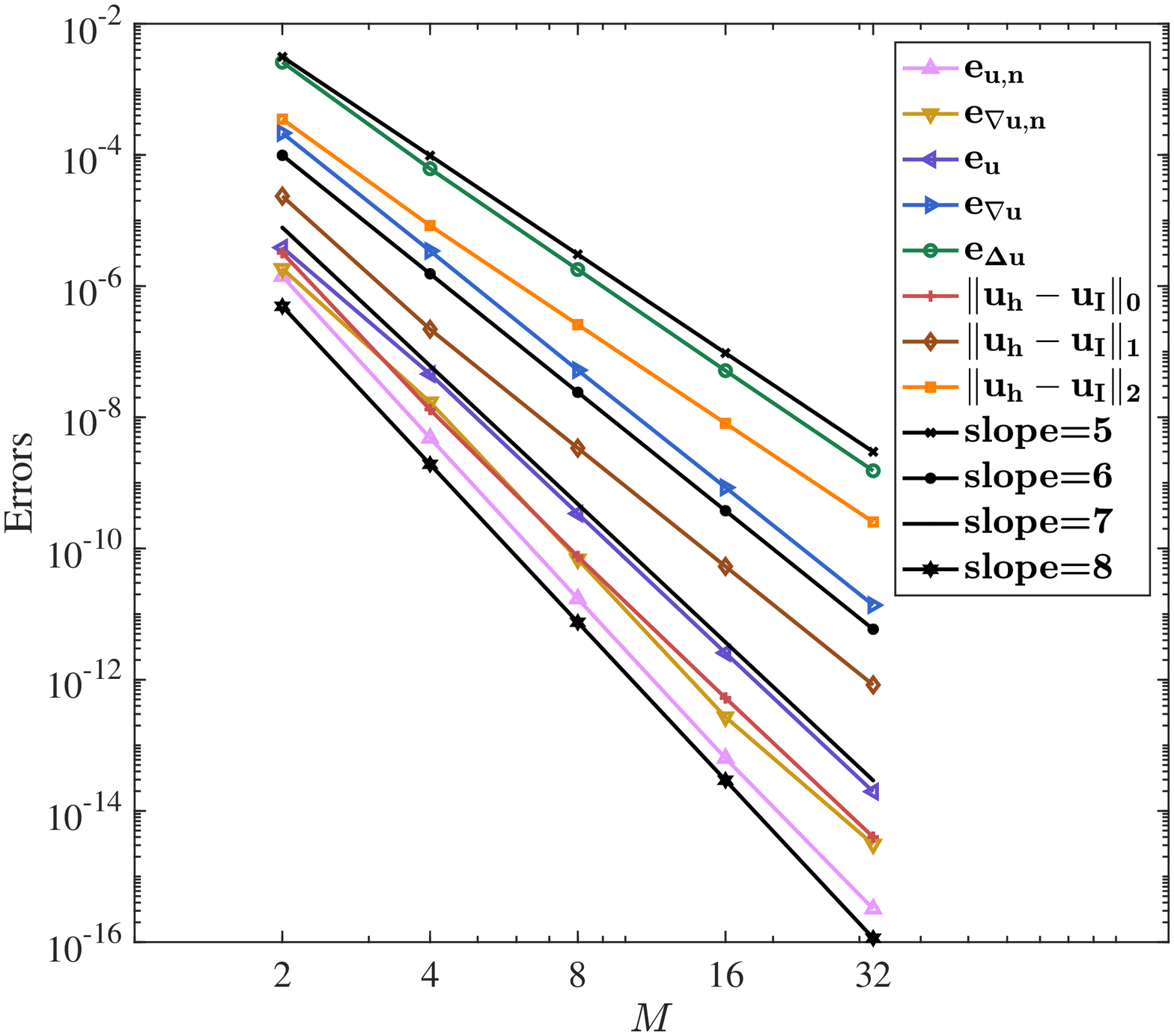}
\end{minipage}%
}%
\centering\vspace{-0.2in}
\caption{Error curves for Example 1 with $\alpha=1$, $\beta=(1,1)$, and $\gamma=1$. (Left: $k=3$, Middle: $k=4$, Right: $k=5$) }\label{const1_coefficient}
\end{figure}

%\begin{table}[H]
%   \caption{\small{Errors, corresponding convergence rates of $u_h-u_I$ for $k=3, 4, 5$.}}\label{uhuI_exam1}
%  \vspace{0.1cm}
%   \centerline{
%   {\footnotesize
%  \begin{tabular}{cccccccc}
%  \hline
%  \quad& \quad& \multicolumn{2}{c}{$\|u_h-u_I\|_0$}&\multicolumn{2}{c}{$\|u_h-u_I\|_1$} &
%  \multicolumn{2}{c}{$\|u_h-u_I\|_2$}\\
%  \hline
%  $k$ & $M$ & error & order& error & order& error & order \\
%  \hline
%  \quad & 2& 3.09e-03 & - &1.76e-02& -& 1.51e-01 & - \\
%  \quad & 4 & 2.56e-04& 3.59 & 1.36e-03	& 3.69& 2.10e-02& 2.84\\
%  3 & 8 &1.72e-05& 3.91 & 8.98e-05& 3.93 & 2.68e-03& 2.98\\
%  \quad & 16 &1.09e-06& 3.98 & 5.68e-06& 3.98 & 3.37e-04& 3.00\\
%    %\quad & 32 &6.85e-08& 4.00 & 3.56e-07& 4.00 & 4.22e-05& 3.00\\
%  \hline \hline
%    \quad & 2&1.10e-04& - &5.57e-04& - &4.95e-03& - \\
%  \quad & 4 & 1.82e-06& 5.92 & 1.16e-05&5.60 & 2.41e-04& 4.36 \\
%  4 & 8 &2.90e-08& 5.98 & 2.96e-07& 5.29 & 1.43e-05&4.08\\
%  \quad & 16 & 4.55e-10& 6.00 & 8.68e-09& 5.10 & 8.82e-07& 4.02\\
%    %\quad & 32 &7.11e-12 & 5.99 & 2.66e-10& 5.02 & 5.50e-08&4.00\\
%  \hline\hline
%      \quad & 2&3.22e-06& - &2.35e-05& - &3.51e-04& - \\
%  \quad & 4 &1.31e-08& 7.96 & 2.20e-07& 6.75 & 8.37e-06&5.40\\
%   5 & 8 & 7.57e-11& 7.44 & 3.40e-09& 6.02 & 2.59e-07& 5.02 \\
%  \quad & 16 & 5.27e-13	& 7.17 & 5.33e-11& 6.00 & 8.10e-09& 5.00\\
%    %\quad & 32 &3.97e-15& 7.04 & 8.33e-13& 5.99 & 2.53e-10& 4.99\\
%    \hline
%%  \multicolumn{2}{c}{predict}  & \quad & $2k-2$ & \quad & $2k-2$ & \quad & $k+2$ & \quad & $k+1$ & \quad  & $k$\\
%%  \hline
%    \end{tabular}}}
%\end{table}

To show the effect of the coefficients on the convergence rate, we further test different choice of  coefficients. Presented in Figures \ref{const2_coefficient}-\ref{const4_coefficient}  are error curves of $\|u_h-u_I\|_m, 0\le m\le 2$ in
three cases: $\alpha=1, \beta=(0,0), \gamma=1$,  $\alpha=1, \beta=(1,1), \gamma=0$, and $\alpha=1,\beta=(0,0),\gamma=0$.
We observe that the convergence   rate for the case
$\alpha=1, \beta=(1,1), \gamma=0$ is the same at that for the counterpart $\alpha=1, \beta=(1,1), \gamma=1$ in Figure \ref{const1_coefficient}.
However, in  the  case $\beta=(0,0)$, it seems that the convergence rate improves for $k=4,5$. To be more precise, we observe  a convergence
rate  $h^{k+2}$ for $\|u_h-u_I\|_1$ and $h^{k+1}$ for $\|u_h-u_I\|_2$ when $k=4,5$, one order higher than the  case  $\beta = (1,1)$.
In other words, it seems that the convection coefficient has effect on the superconvergence rate. 

\begin{figure}[H]
\centering
\subfigure{
\begin{minipage}[t]{0.33\linewidth}
\centering
\includegraphics[width=1.6in,height=1.4in]{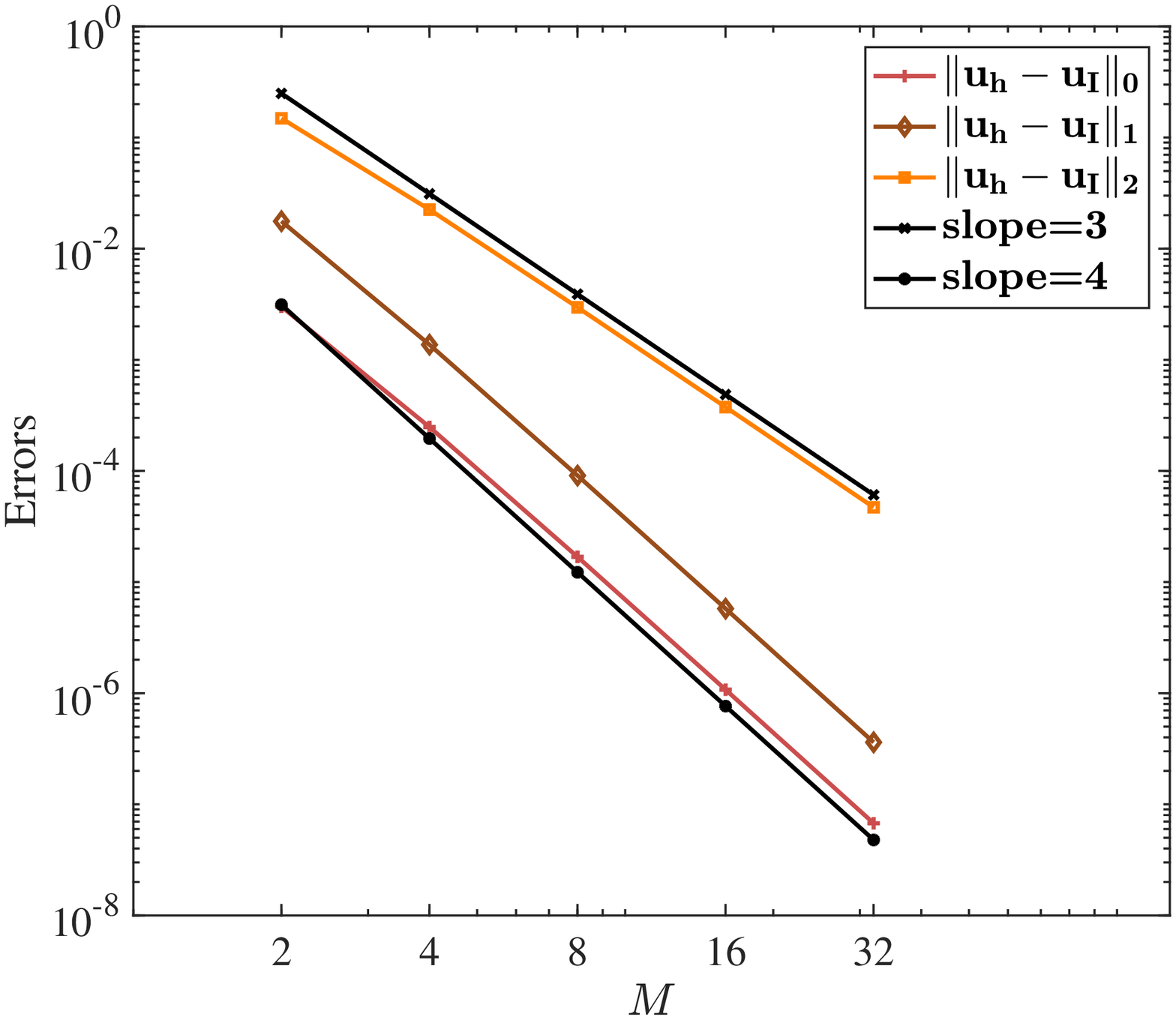}
\end{minipage}%
}%
\subfigure{
\begin{minipage}[t]{0.33\linewidth}
\centering
\includegraphics[width=1.6in,height=1.4in]{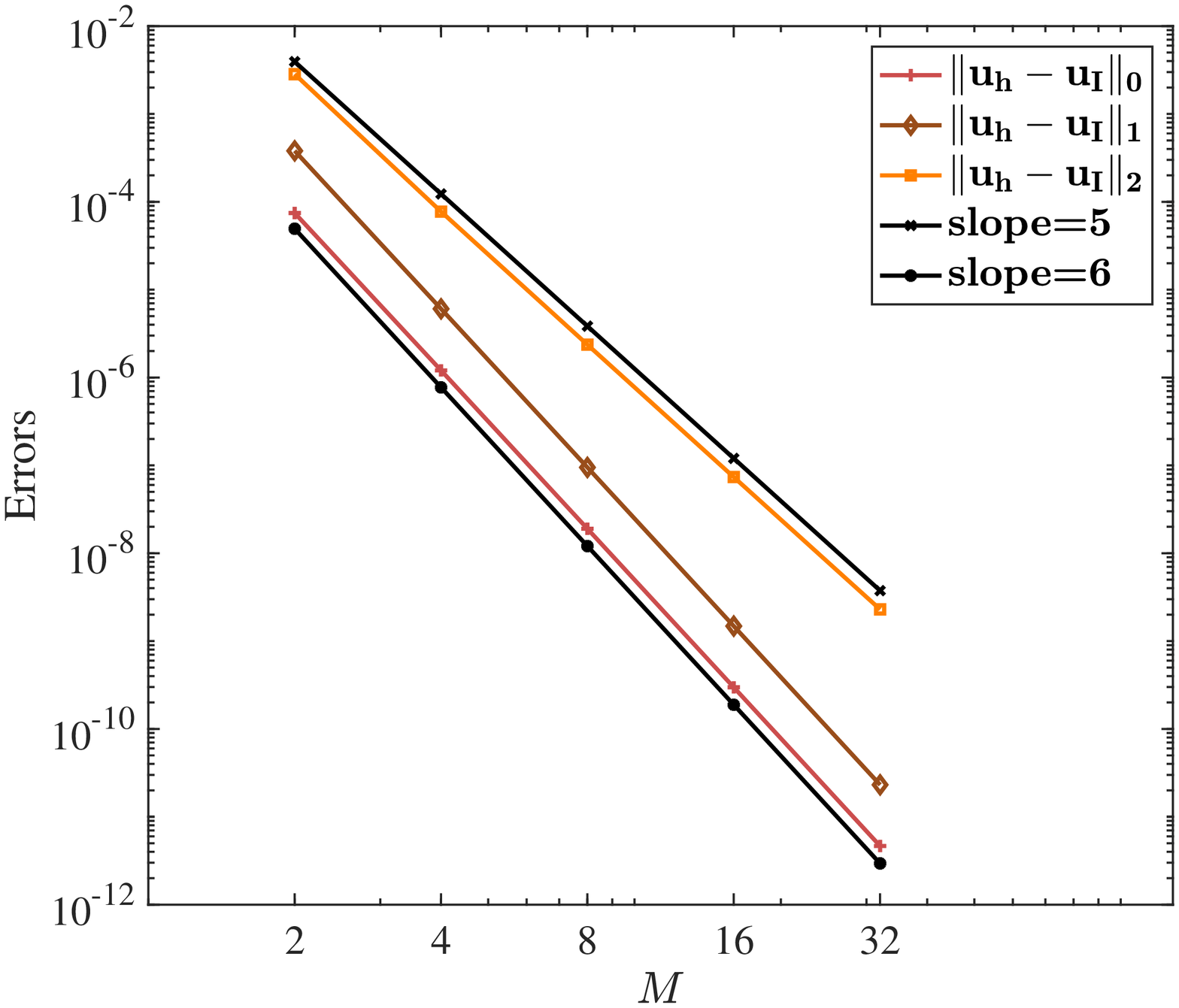}
\end{minipage}%
}%
\subfigure{
\begin{minipage}[t]{0.33\linewidth}
\centering
\includegraphics[width=1.6in,height=1.4in]{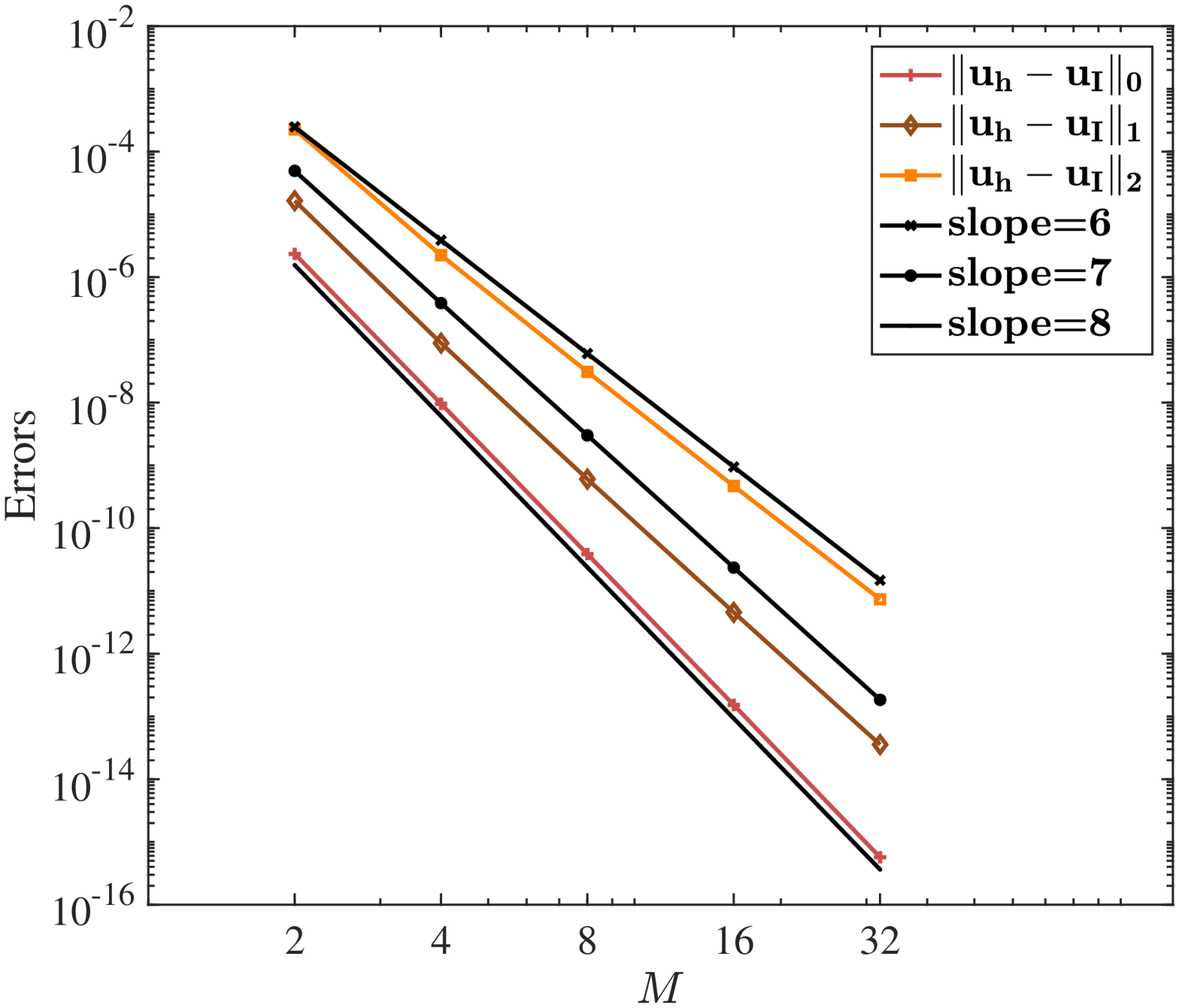}
\end{minipage}%
}%
\centering\vspace{-0.2in}
\caption{Error curves for Example 1 with $\alpha=1,$ $\beta=(0,0)$, and $\gamma=1$. (Left: $k=3$, Middle: $k=4$, Right: $k=5$) }\label{const2_coefficient}
\end{figure}

\begin{figure}[H]
\centering
\subfigure{
\begin{minipage}[t]{0.33\linewidth}
\centering
\includegraphics[width=1.6in,height=1.4in]{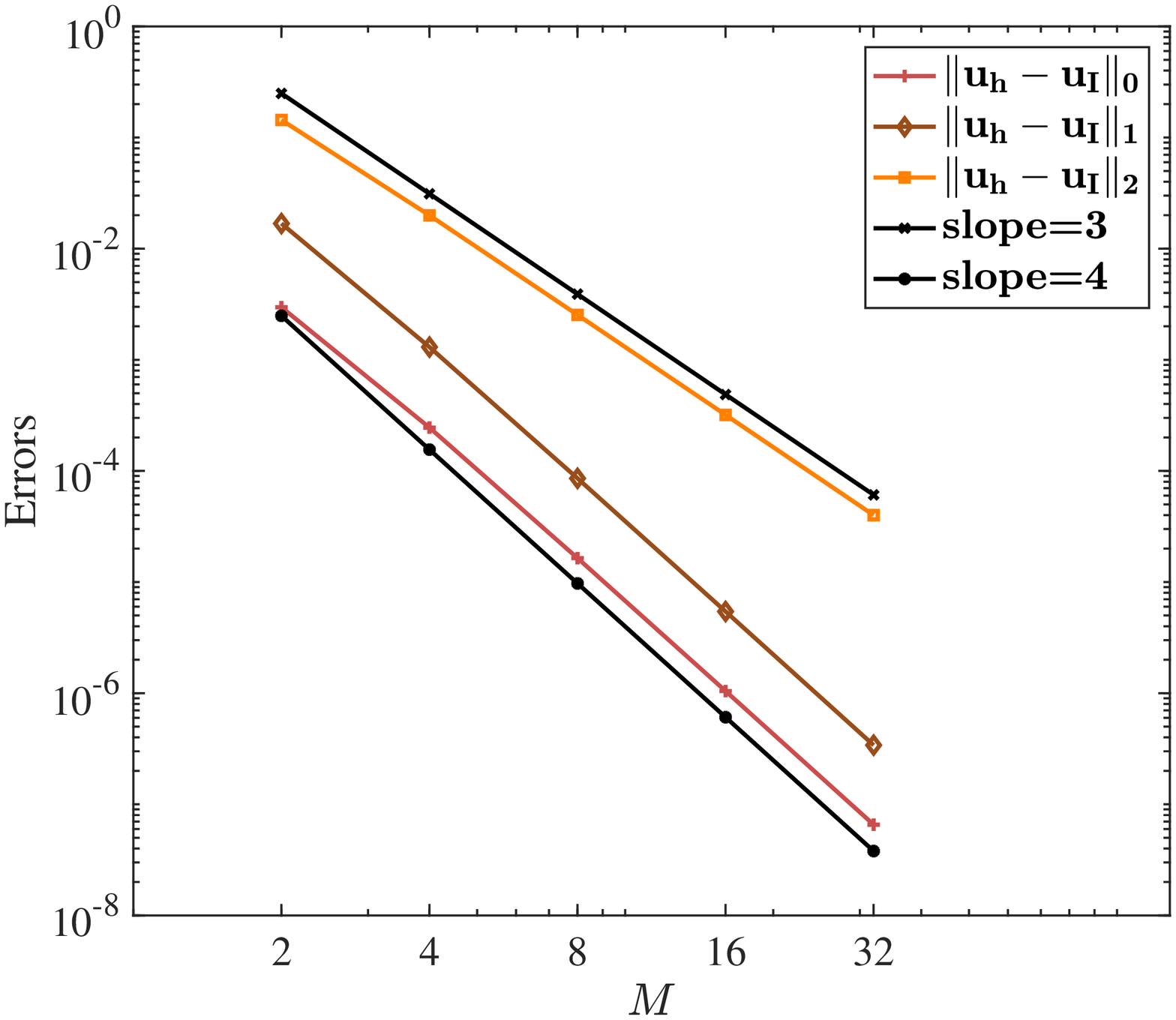}
\end{minipage}%
}%
\subfigure{
\begin{minipage}[t]{0.33\linewidth}
\centering
\includegraphics[width=1.6in,height=1.4in]{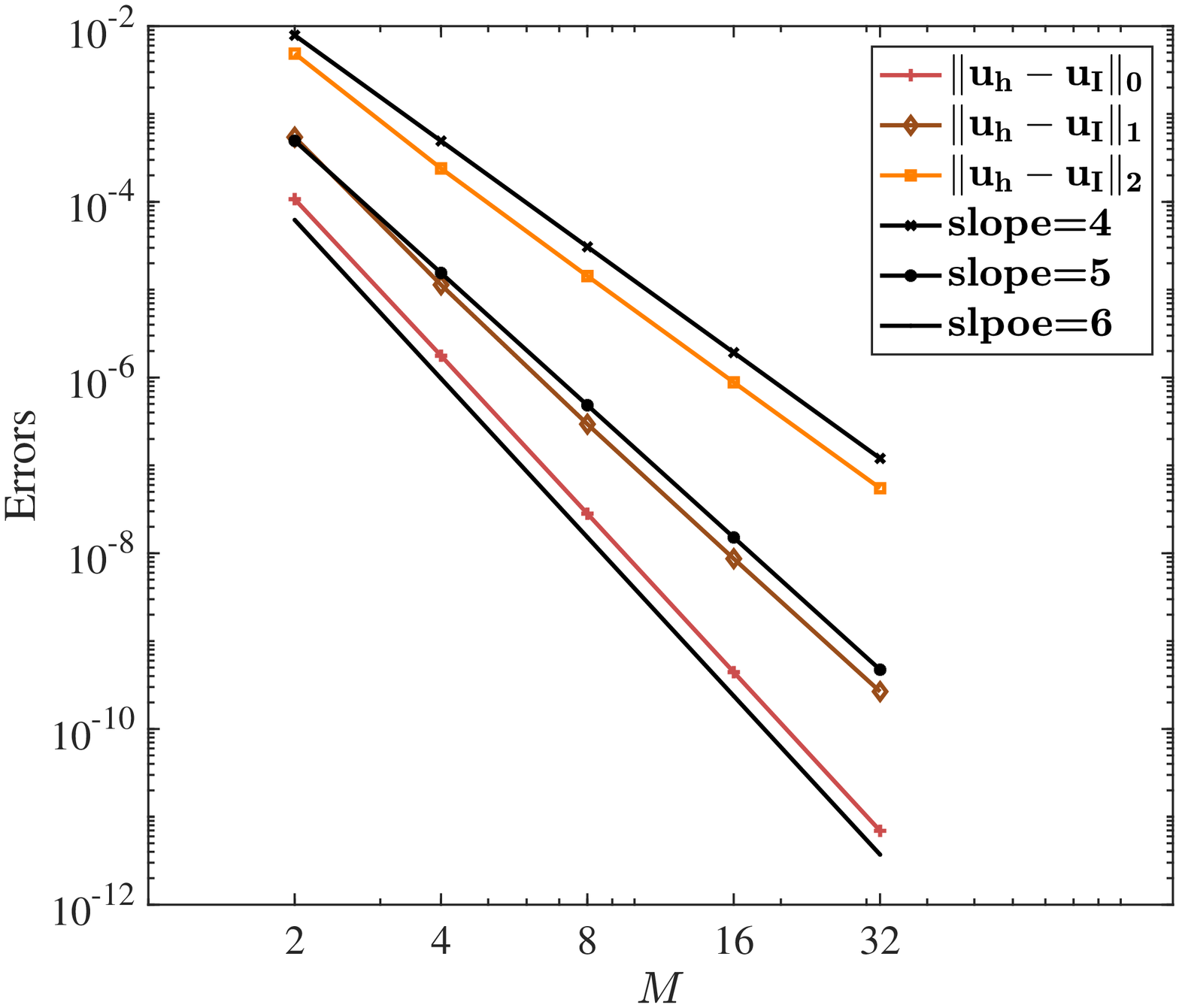}
\end{minipage}%
}%
\subfigure{
\begin{minipage}[t]{0.33\linewidth}
\centering
\includegraphics[width=1.6in,height=1.4in]{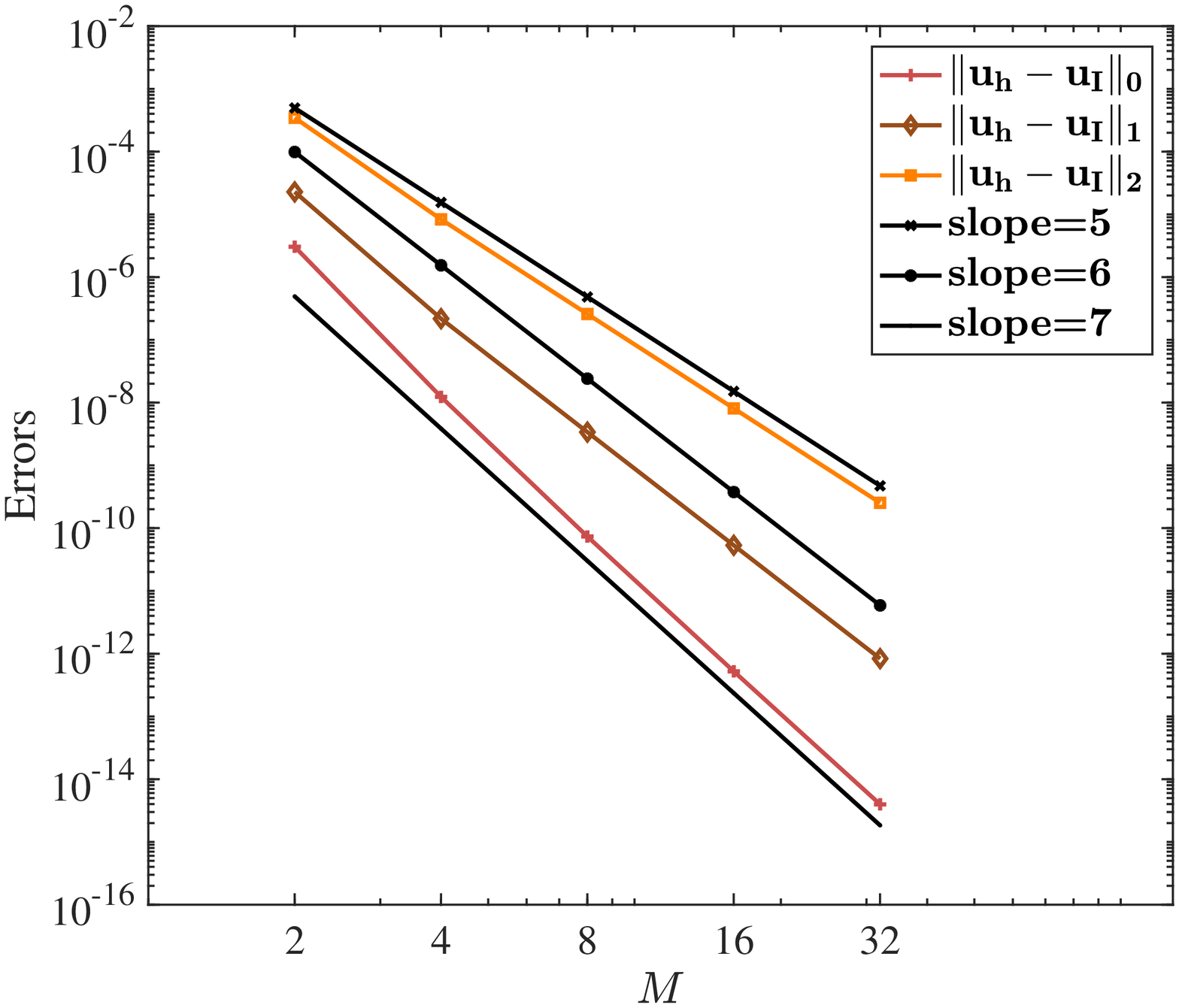}
\end{minipage}%
}%
\centering\vspace{-0.2in}
\caption{Error curves for Example 1 with $\alpha=1$, $\beta=(1,1)$, and $\gamma=0$. (Left: $k=3$, Middle: $k=4$, Right: $k=5$) }\label{const3_coefficient}
\end{figure}

We also present in  Figure \ref{const4_coefficient} the error curves for $e_{u,n}$ and $ e_{\nabla u,n}$
for  the  Poisson equation, i.e., $\alpha=1, \beta=(0,0),\gamma=0$. We observe a convergence rate of $h^{2k-2}$  for both $e_{u,n}, e_{\nabla u,n}$. Note that this superconvergence 
phenomenon for  two-dimensional case is different from that for 
the  one-dimensional case, where 
 both errors $e_{u,n}, e_{\nabla u,n}$ equal to zero (see, \cite{cao-jia-zhang}). 
\begin{figure}[H]
\centering
\subfigure{
\begin{minipage}[t]{0.33\linewidth}
\centering
\includegraphics[width=1.6in,height=1.4in]{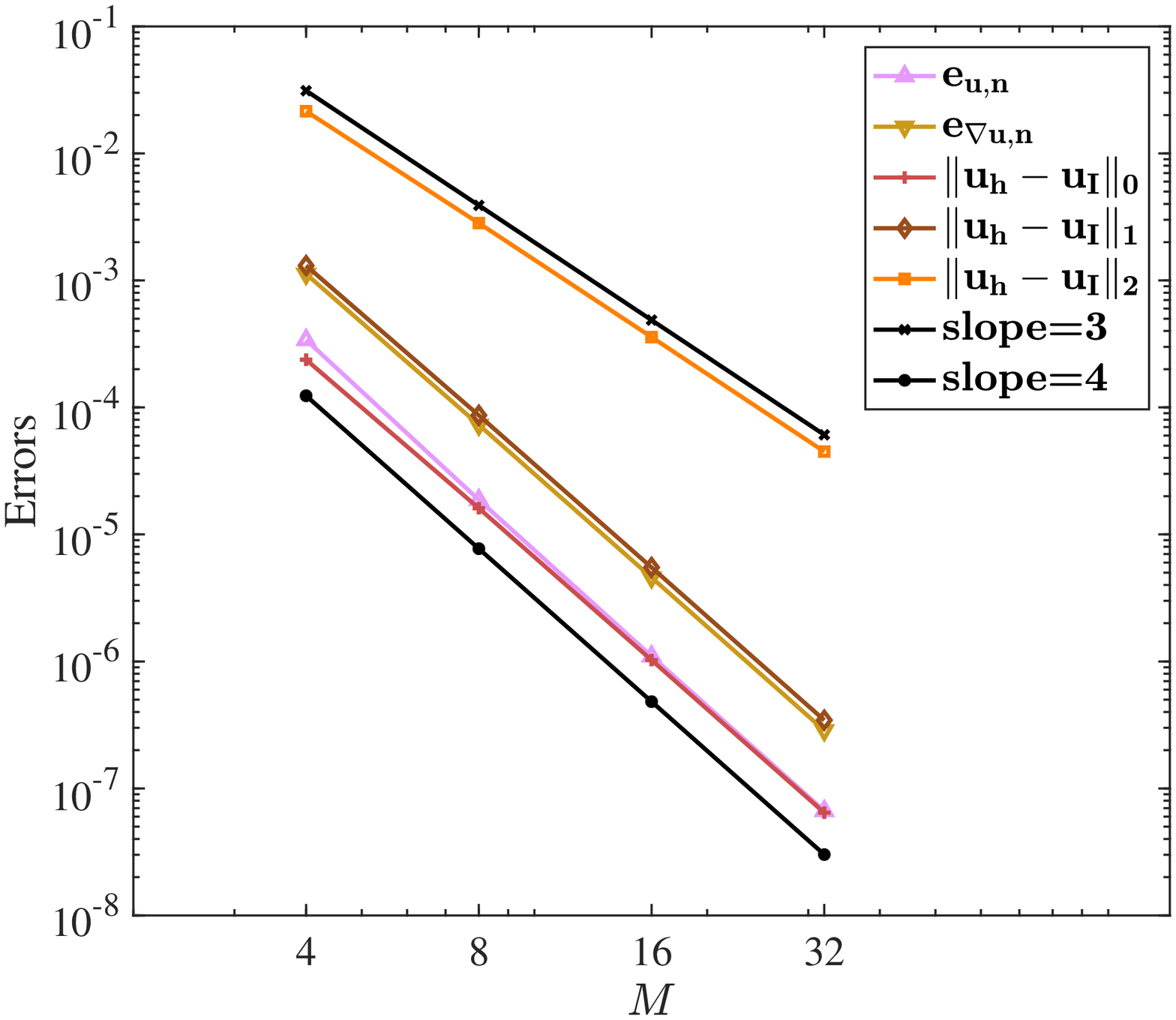}
\end{minipage}%
}%
\subfigure{
\begin{minipage}[t]{0.33\linewidth}
\centering
\includegraphics[width=1.6in,height=1.4in]{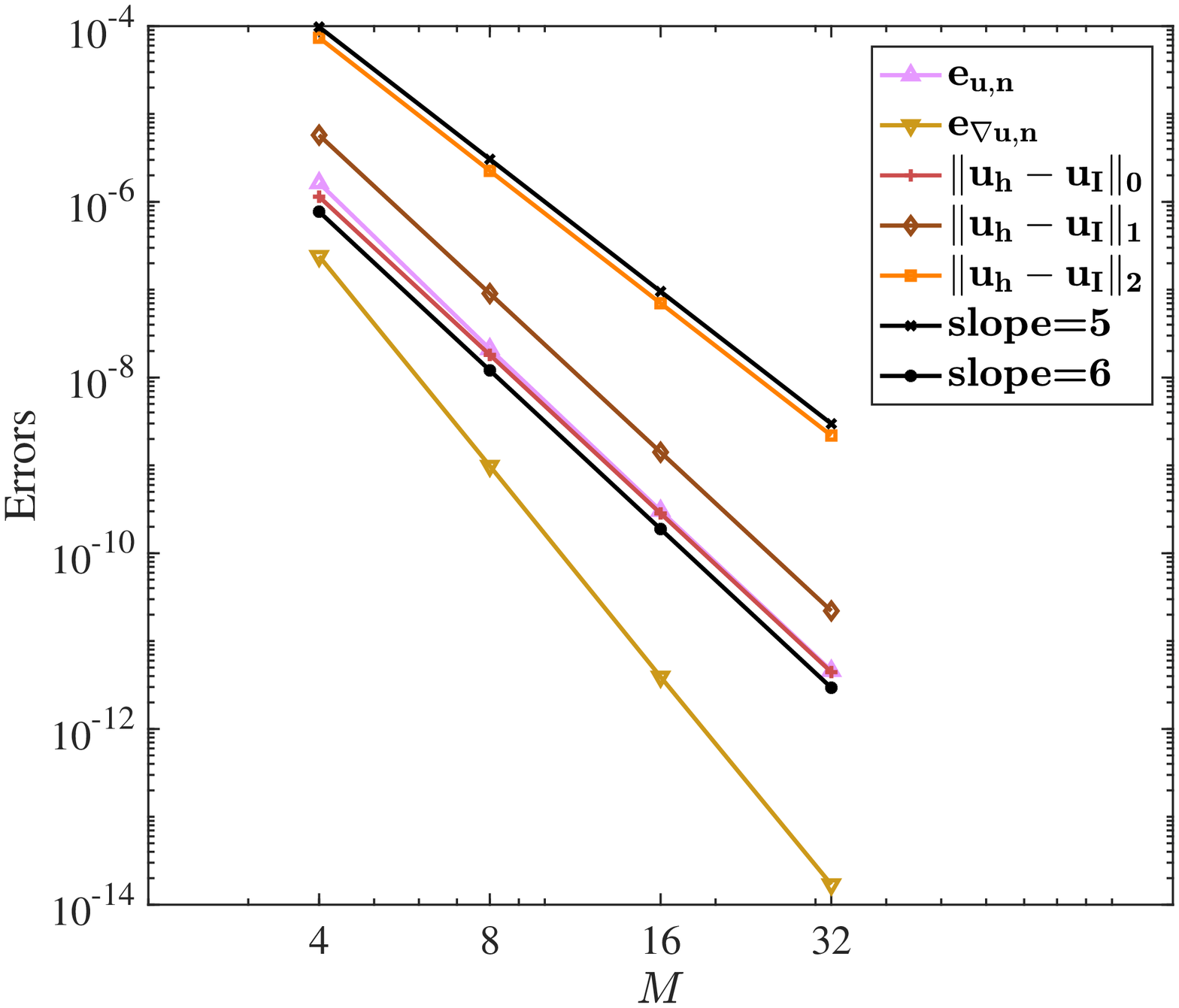}
\end{minipage}%
}%
\subfigure{
\begin{minipage}[t]{0.33\linewidth}
\centering
\includegraphics[width=1.6in,height=1.4in]{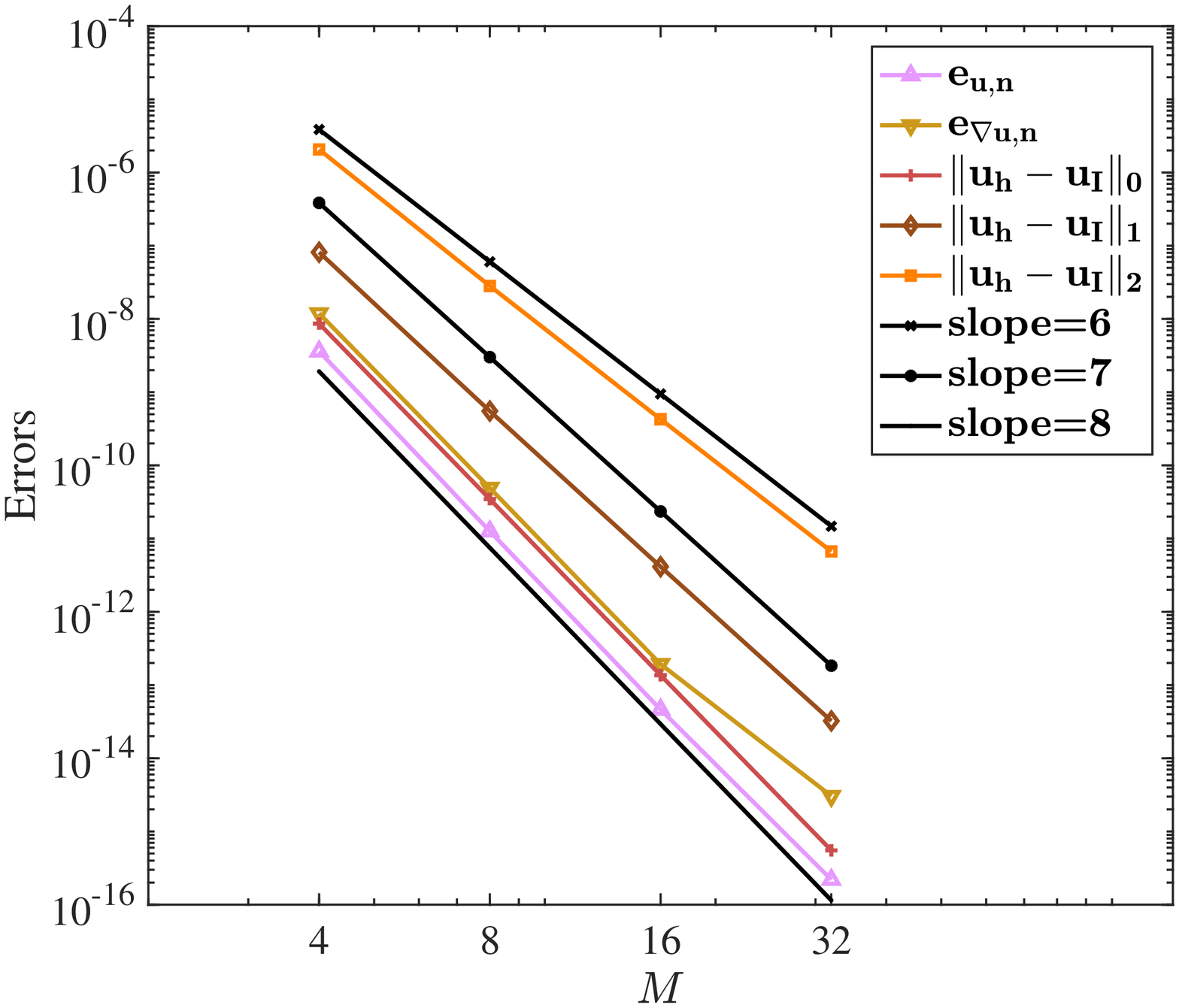}
\end{minipage}%
}%
\centering\vspace{-0.2in}
\caption{Error curves for Example 1 with $\alpha=1$, $\beta=(0,0)$, and $\gamma=0$. (Left: $k=3$, Middle: $k=4$, Right: $k=5$) }\label{const4_coefficient}
\end{figure}

{\it Example 2:} We consider the problem \eqref{con_laws}
  with the following variable coefficients: 
\begin{equation*}
\alpha(x,y) = e^{xy}, \quad \beta(x,y) = \left( x^2y, xy^2\right), \quad \gamma(x,y) = 2xy.
\end{equation*}
The right-hand side function $f(x,y)$ is chosen such that the exact solution is
$$u(x,y)=xy(1-e^{x-1})(1-e^{y-1}).$$

 The corresponding error curves for $k=3, 4, 5$ are presented in Figure \ref{variable_coefficient}.
We see that,  both convergence rates for $e_{u,n}$ and $e_{\nabla u,n}$ are  $h^{2k-2}$, and  convergence rates for $e_{u}$, $e_{\nabla u}$, $e_{\Delta u}$  are $h^{k+2}$, $h^{k+1}$, $h^k$, respectively. All these results again verify our theoretical findings in Theorems  \ref{theo:2}-\ref{theo:03}.
Just the same  as the constant coefficient case in Example 1, we observe a convergence rate $h^{\min\{k+2,2k-2\}}$ for $\|u_h-u_I\|_0$,
  $h^{k+1}$ for $\|u_h-u_I\|_1$,  and $h^k$ for $\|u_h-u_I\|_2$.
Again, all these  results  are consistent with  the error bounds established in Theorems  \ref{theo:02}-\ref{theo:03}.

%\begin{table}[H]
%   \caption{\small{Errors, corresponding convergence rates of $u_h-u_I$ for $k=3, 4, 5$.}}\label{uhuI_exam2}
%  \vspace{0.1cm}
%   \centerline{
%   {\footnotesize
%  \begin{tabular}{cccccccc}
%  \hline
%  \quad& \quad& \multicolumn{2}{c}{$\|u_h-u_I\|_0$}&\multicolumn{2}{c}{$\|u_h-u_I\|_1$} &
%  \multicolumn{2}{c}{$\|u_h-u_I\|_2$}\\
%  \hline
%  $k$ & $M$ & error & order& error & order& error & order \\
%  \hline
%  \quad & 2& 2.54e-05 & - &1.59e-04& -&1.57e-03 & - \\
%  \quad & 4 & 2.14e-06& 3.58 & 1.24e-05& 3.69 & 2.07e-04& 2.93
% \\
%  3 & 8 &1.42e-07& 3.92 & 8.11e-07& 3.94 & 2.69e-05&2.95
%\\
%  \quad & 16 &9.02e-09& 3.97 & 5.13e-08& 3.98 & 3.39e-06& 2.98\\
%    %\quad & 32 &6.63e-08& 4.00 & 3.47e-07& 4.00 & 4.23e-05&3.00\\
%  \hline \hline
%  \quad & 2&2.67e-07& - &1.55e-06& - &1.80e-05& - \\
%  \quad & 4 &5.58e-09& 5.59 & 4.55e-08& 5.10 & 1.08e-06& 4.06
%\\
%   4 & 8 &9.32e-11& 5.90 & 1.30e-09&5.12 & 6.58e-08& 4.04
% \\
%  \quad & 16 &1.48e-12& 5.99 & 3.96e-11& 5.05 & 4.08e-09& 4.02
%\\
%    %\quad & 32 &7.07e-12& 6.00 & 3.17e-10& 5.02 & 6.55e-08	&4.00 \\
%  \hline\hline
%       \quad & 2&1.87e-09& - &2.45e-08& - &5.56e-07& - \\
%  \quad & 4 &1.90e-11& 6.62 & 4.80e-10& 5.67 & 1.95e-08& 4.83
%  \\
%  5 & 8 &1.58e-13& 6.91 & 8.16e-12& 5.87 & 6.32e-10&4.95
% \\
%  \quad & 16 & 1.25e-15& 7.00 & 1.31e-13& 5.97 & 2.00e-11& 4.99
%  \\
%    %\quad & 32 &6.70e-15& 7.01 & 1.42e-12& 6.00 & 4.30e-10	&5.00\\
%    \hline
%%  \multicolumn{2}{c}{predict}  & \quad & $2k-2$ & \quad & $2k-2$ & \quad & $k+2$ & \quad & $k+1$ & \quad  & $k$\\
%%  \hline
%    \end{tabular}}}
%\end{table}

\begin{figure}[H]
\centering
\subfigure{
\begin{minipage}[t]{0.33\linewidth}
\centering
\includegraphics[width=1.6in,height=1.4in]{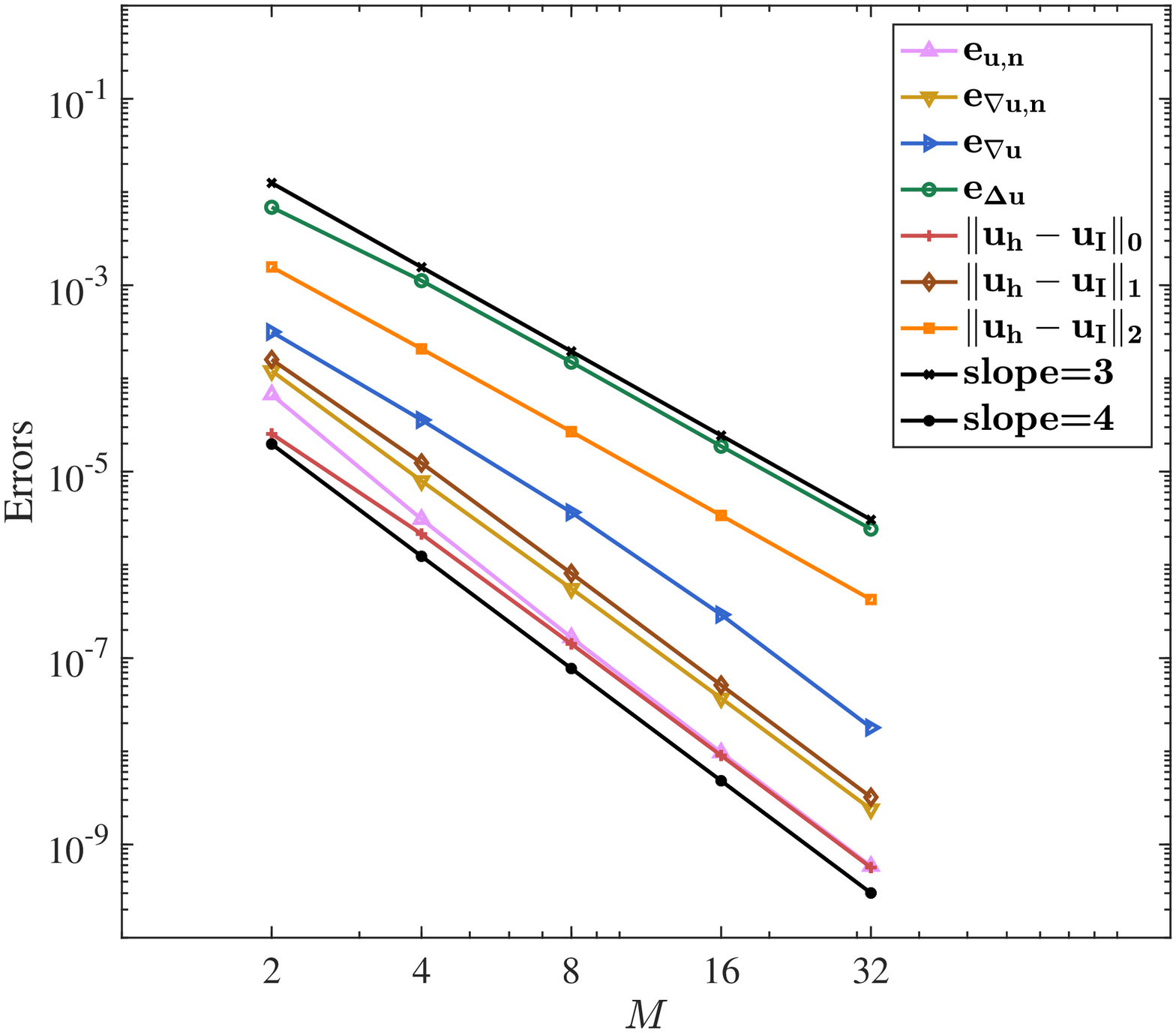}
\end{minipage}%
}%
\subfigure{
\begin{minipage}[t]{0.33\linewidth}
\centering
\includegraphics[width=1.6in,height=1.4in]{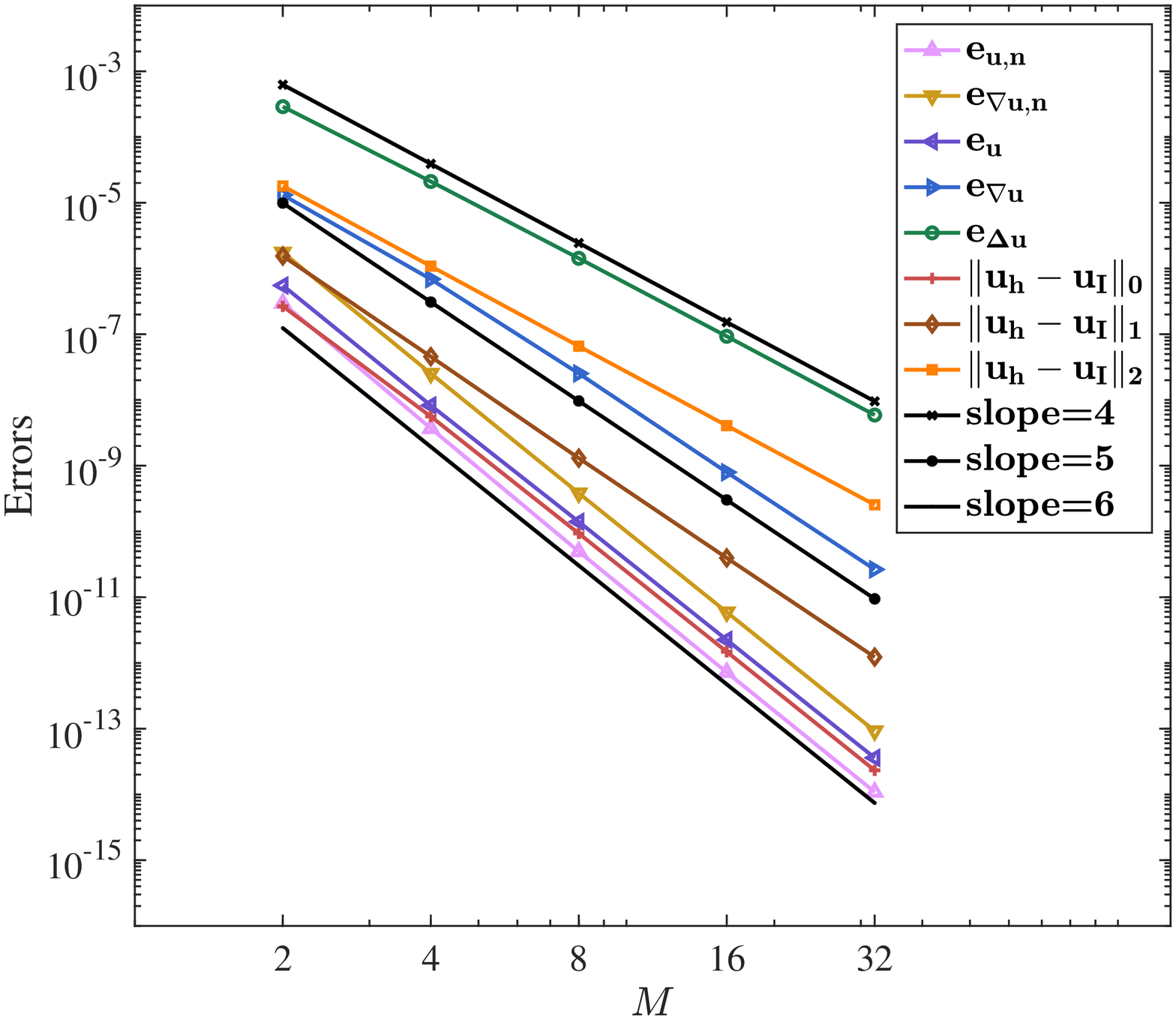}
\end{minipage}%
}%
\subfigure{
\begin{minipage}[t]{0.33\linewidth}
\centering
\includegraphics[width=1.6in,height=1.4in]{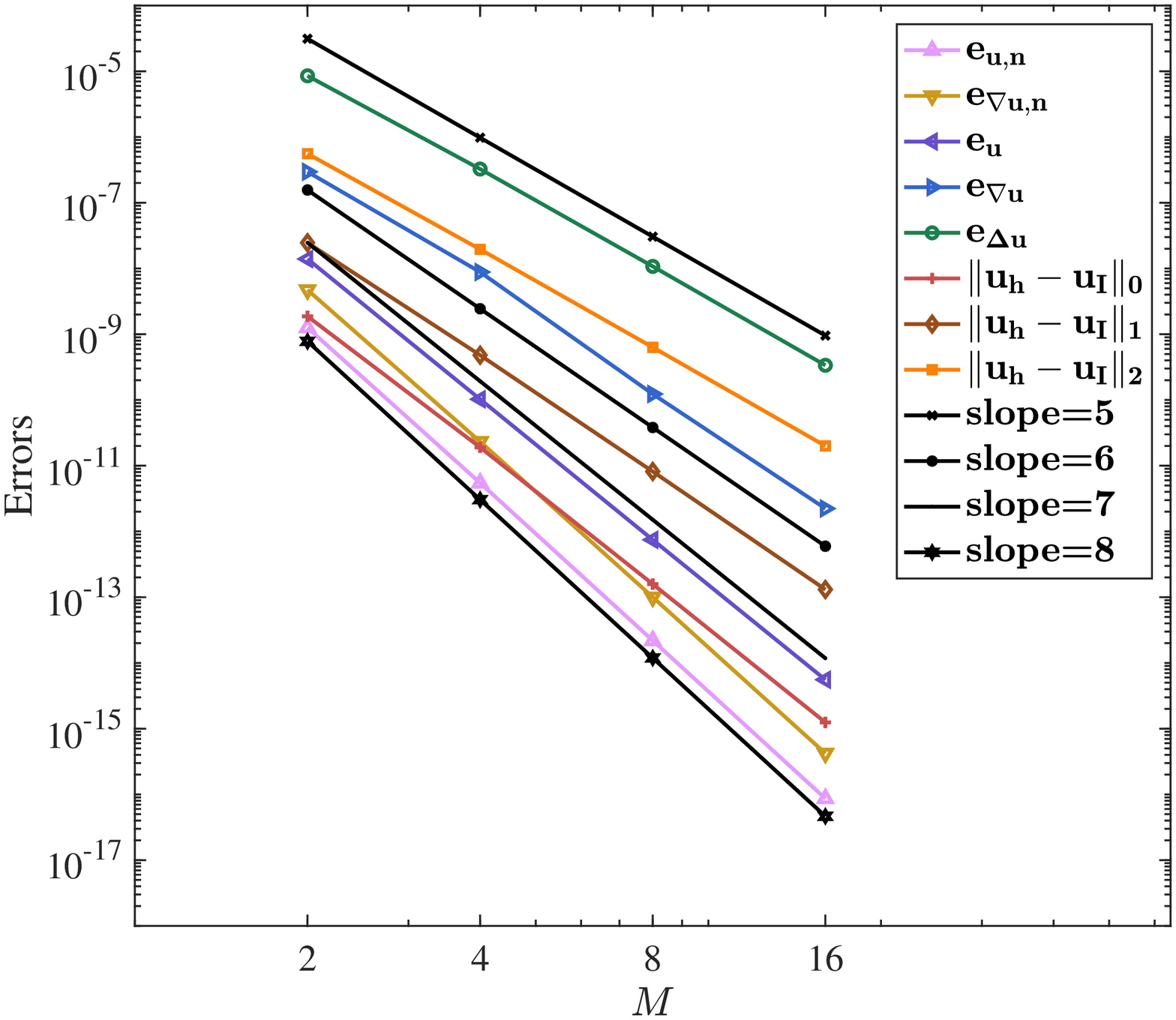}
\end{minipage}%
}%
\centering\vspace{-0.2in}
\caption{Error curves for Example 2. (Left: $k=3$, Middle: $k=4$, Right: $k=5$) }\label{variable_coefficient}
\end{figure}

\section{Conclusion}

 In this work, we have proposed a new $C^1$-$L^2$ Petrov-Galerkin method for convection-diffusion equations over rectangular meshes.
 The numerical scheme is designed to use the $C^1$-conforming ${\mathbb Q}_k$ element as our trial space and $L^2$  piecewise ${\mathbb Q}_{k-2}$ polynomials
 as our test space. We prove  that the designed numerical method is convergent with optimal rates in the $H^1, L^2, H^2$-norms, respectively.
 Furthermore, we have presented a unified   approach to study the superconvergence property of the Petrov-Galerkin method and
  establish the superconvergence results  including: 1) the function value and the first-order derivative 
  are superconvergent   with a rate of $h^{2k-2}$ at all mesh nodes; 2) the function value approximation is   superconvergent with rate
 $h^{k+2}$  at roots of $J^{-2,-2}_{k+1}(x)\otimes J^{-2,-2}_{k+1}(y)$; 3) the    first-order and second-order derivatives are superconvergent with rates of $h^{k+1}$ and $h^k$ along the Lobatto lines and Gauss lines, respectively; 4) the numerical solution $u_h$
  is superconvergent towards the special Jacobi projection $u_I$ of the exact solution in all $L^2,H^1,H^2$-norms.
  Numerical experiments demonstrate that all the established error
bounds are optimal.

We would like to point out that in principle it is straightforward to generalize the methodology we adopt in this paper to convection-diffusion equations with variable coefficients.
 However, it requires very tedious and lengthy arguments to carry on the argument, in a mathematically rigorous way. Our numerical results demonstrate that 
 the same convergence and superconvergence  results still hold true for convection-diffusion equations with variable coefficients.

\section{Appendix}

This section is dedicated to the construction of the correction function $w_h$ satisfying the conditions of Proposition 2. 

 In light of 
\eqref{decom:2} and the estimates  of $E^xE^yu$ in Lemma \ref{lemma:1}, we have 
\[
    a(u-u_I+w_h,\theta)=a(E^xu+E^yu+w_h,\theta)+O(h^{2k-2})\|\theta\|_0.
\]
  In other words, to achieve our superconvergence goal, we need to  construct correction functions $w_h^x,w_h^y\in V_h$ to separately correct the 
  two low-order errors $a(E^xu,\theta)$ and $a(E^xy,\theta)$
  such that
\[
    a(E^xu+w_h^x,\theta)=O(h^{k-1+l})\|\theta\|_0,\ \ a(E^yu+w_h^y,\theta)=O(h^{k-1+l})\|\theta\|_0
\]
  for some positive $l$.  

\subsection{Correction function for the error  $a(E^xu,\theta)$ }

We begin with the introduction of some operators $Q_h^y, Q_h^x$.
For any function $v(x,y)$,  we define a special operator $Q_h^y$ along the $y$-direction as follows:
$Q_h^yv|_{\tau_j^y}\in\mathbb P_{k}(\tau_j^y)$ satisfies
\begin{eqnarray*}
  && (Q_h^yv-v)(\cdot,y_{j})=\partial_y(Q_h^yv-v)(\cdot,y_{j})=0,\ j\in\bZ_N,\\
  &&\int_{y_{j-1}}^{y_j}(Q_h^yv-v)(\cdot,y)\theta dy=0,\ \ \forall\theta \in\mathbb P_{k-4}(\tau_j^y),\ k\ge 4.
\end{eqnarray*}
   Note that the operator $Q_h^y$ is actually the one dimensional truncated Jacobi expansion along the $y$-direction  while
   the other variable is fixed.  Consequently,
\begin{equation}
   \|v-Q_h^yv\|_{p,\infty,\tau_j^y}=\|E^yv\|_{p,\infty,\tau_j^y}\lesssim h^{k+1-p}\|v\|_{p,\infty,\tau_j^y},\ \ \forall p\le k+1. 
\end{equation}
  Similarly we can define the operator $Q_h^x$ along the $x$-direction.  

In light of \eqref{Ex}, we have   
  \begin{equation}\label{decom:1}
   E^xu|_{\tau_{ij}}=\sum_{p=k+1}^{\infty} c_{i,p}(y)J_{j,p}^{-2,-2}(x).
\end{equation}
 Define
\begin{equation}\label{vip}
    u_0(x,y)|_{\tau_{ij}}:=
   c_{i,k+1}(y)J^{-2,-2}_{i,k+1}(x)+c_{i,k+2}(y)J^{-2,-2}_{i,k+2}(x),
\end{equation}
 By the orthogonality  of Jacobi polynomials, i.e.,  $J^{-2,-2}_{n+1}\bot\mathbb P_{n-4}, \partial_sJ^{-2,-2}_{n+1}\mathbb \bot {\mathbb P}_{n-3}$ (see \cite{cao-jia-zhang}), we have 
\begin{eqnarray}\label{EExu:1}
\begin{split}
   a(E^xu,\theta)&=(-\alpha E^xu_{yy}+\beta_1\partial_xE^xu+\beta_2 E^xu_y+\gamma E^xu,\theta)&\\
   %  &=&\sum_{i=1}^M\sum_{j=1}^N\int_{\tau_{ij}}\left(
   %  \frac{2\beta_1}{h_i^x}(c_{i,k+1}(y)\phi_{i,k}(x))+v_{i,k}(y)J_{k+1}^{-2,-2}(x)+v_{i,k+1}(y)J_{k+2}^{-2,-2}(x)\right)\theta dxdy\\
     &= (\beta_1\partial_xu_0,\theta)+ ((-\alpha \partial_{yy}^2+\beta_2\partial_y+\gamma)u_0,\theta). &
\end{split}
\end{eqnarray}
   Let
\begin{eqnarray}\label{lam:1}
   \lambda_1(y)=Q_h^yc_{i,k+1}(y),\ \ \lambda_2(y)=Q_h^yc_{i,k+2}(y),
\end{eqnarray}
  and define
 \begin{equation}\label{w0}
    w_0(x,y)|_{\tau_i^x}=\lambda_1(y)J_{i,k+1}^{-2,-2}(x)+\lambda_2(y)J_{i,k+2}^{-2,-2}(x),\ \ w_{-1}(x,y)=0.
 \end{equation}
  For all $l\in\bZ_{k-2}$,  we define a series of functions $w_l\in V_h$ as follows:
%\begin{equation}\label{ww}
%   \partial_{x}w_{l}=\beta_1 w_{l-1}+ (\beta_2\partial_y-\alpha\partial_{yy}+\gamma)\partial_x^{-1}w_{l-2},\ \ 2\le l\le k-2.
%\end{equation}
  \begin{eqnarray}\label{wwx}
  &&\alpha(\partial_{xx}w_{l},\theta)=(\beta_1\partial_x w_{l-1}+ (\beta_2\partial_y-\alpha\partial^2_{yy}+\gamma)w_{l-2},\theta ),\ \ \forall\theta\in{\mathcal S}^x_h,\\\label{wwx:1}
  &&\partial_{x}w_{l}(x_i,y)=0,\ \ w_{l}(a,y)=0,\ \  w_{l}(x,c)=w_{l}(x,d)=0,\ \forall (x,y)\in\Omega.
\end{eqnarray}
  where
\begin{equation}\label{sh}
   {\mathcal S}^x_h:=\{\theta(x,y): \theta|_{\tau}\in(\mathbb P_{k-2}(x)\setminus \mathbb P_0(x))\times \mathbb P_{k-2}(y),\ \forall\tau\in{\mathcal T}_h\}.
\end{equation}

\begin{lemma}\label{lemma:000} Define 
\[
    W^y_h:=\{v(y)\in C^1([c,d]):  v|_{\tau_j^y}\in\mathbb P_{k},\ v(c)=v(d)=0,\  j\in\bZ_N\}. 
\]
  Given any smooth function $g$,  assume that $v(y)\in W^y_h$ is the solution of the following problem: 
  \begin{equation}\label{eqq:001}
      \int_{c}^d v(y)\theta(y) dy= \int_{c}^d g(y)\theta(y) dy,\ \ \forall \theta\in \mathbb P_{k-2}(\tau_j^y). 
  \end{equation}
    Then $v(y)$ is well defined. Moreover, there holds 
\begin{equation}\label{eqq:002}
    \|\partial_y^nv\|_{0,\infty,[c,d]}\lesssim \|\partial_y^{n}g\|_{0,\infty,[c,d]},\ \ \forall n\le k. 
\end{equation}
 \end{lemma}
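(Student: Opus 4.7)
My plan proceeds in two stages: a dimension-plus-energy argument for well-posedness, followed by a local-global decomposition for the quantitative bound \eqref{eqq:002}.

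For well-posedness, a direct count gives $\dim W_h^y = N(k+1)-2(N-1)-2 = N(k-1)$, which matches the dimension of the piecewise-$\mathbb P_{k-2}$ test space.  Hence uniqueness implies existence, and it suffices to prove that $g\equiv 0$ forces $v\equiv 0$.  The key observation is that $v_{yy}|_{\tau_j^y}\in\mathbb P_{k-2}$, so it is an admissible element-wise test function.  Substituting it into \eqref{eqq:001} with $g=0$, summing over $j$, and integrating by parts,
\begin{equation*}
0 \;=\; \sum_{j=1}^{N}\int_{\tau_j^y} v\,v_{yy}\,dy \;=\; \sum_{j=1}^{N}\bigl[v\,v_y\bigr]_{y_{j-1}}^{y_j} - \int_c^d v_y^2\,dy \;=\; -\int_c^d v_y^2\,dy,
\end{equation*}
since the boundary contributions from neighbouring elements cancel by the $C^1$ continuity of $v$ at the interior nodes and vanish at the endpoints by the homogeneous boundary condition.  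Hence $v_y\equiv 0$, and together with $v(c)=0$ we conclude $v\equiv 0$.

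For the stability estimate \eqref{eqq:002}, I would split $v = v^H + v^B$, where $v^H$ is the globally $C^1$ piecewise cubic Hermite function carrying the nodal values and derivatives of $v$, and $v^B \in W_h^y$ is a piecewise ``bubble'' satisfying $v^B(y_j) = v^B{}'(y_j) = 0$ at every node.  On each element, $v^B|_{\tau_j^y}$ has $k-3$ local degrees of freedom, so splitting the test space $\mathbb P_{k-2}$ into a $(k-3)$-dimensional ``interior'' part and a two-dimensional ``nodal'' complement decouples \eqref{eqq:001} into (i) $N$ local invertible Gram-type systems that determine $v^B$ elementwise in terms of $g-v^H$, and (ii) a banded global system for the nodal Hermite data of $v^H$.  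The energy identity used in Stage~1, applied now with nonzero $g$, shows this reduced nodal system is symmetric positive-definite, and a scaling argument on the reference element shows its conditioning is independent of $h$.  A discrete Green's-function bound then gives $\max_j(|v(y_j)| + h_j|v'(y_j)|) \lesssim \|g\|_{0,\infty,[c,d]}$, which combined with the elementwise bubble estimate establishes \eqref{eqq:002} for $n=0$.  The higher-derivative cases $1 \le n \le k$ would follow by a polynomial-reproduction argument: the map $g \mapsto v$ reproduces every $p \in \mathbb P_k$ with $p(c)=p(d)=0$, so subtracting a suitable polynomial interpolant of $g$ and invoking the $n=0$ bound together with local approximation theory and an inverse inequality yields the $W^{n,\infty}$ estimate.

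\textbf{Main obstacle.}  The technical heart is controlling the reduced global system for the nodal Hermite data uniformly in $N$: the $C^1$ coupling is genuinely nonlocal, so the stability argument cannot be purely local, and the conditioning of the resulting banded matrix must be shown independent of the mesh parameter.  If a direct banded-matrix analysis proves cumbersome, the natural alternative is to import the analogous one-dimensional projection estimates from \cite{cao-jia-zhang}, whose arguments apply almost verbatim to the reduced system here.
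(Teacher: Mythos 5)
Your Stage 1 is exactly the paper's uniqueness argument: test with $\theta=\partial_{yy}v$ elementwise, integrate by parts, and use the $C^1$ continuity at interior nodes together with the boundary conditions to conclude $\|\partial_y v\|_0=0$, hence $v\equiv 0$; combined with your (correct) dimension count $\dim W_h^y=N(k-1)=\dim$(test space), this settles well-posedness. No issues there.

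Stage 2, however, contains a genuine gap, and it is precisely the one you flag yourself. All of the quantitative content of \eqref{eqq:002} is concentrated in the claim that the reduced global system for the nodal Hermite data is uniformly well-conditioned and obeys a discrete Green's-function bound $\max_j\bigl(|v(y_j)|+h_j|v'(y_j)|\bigr)\lesssim\|g\|_{0,\infty}$; you assert this but do not prove it, and the supporting claim that the energy identity makes the reduced nodal system ``symmetric positive-definite'' cannot be right as stated: the pairing between $W_h^y$ and the discontinuous piecewise $\mathbb P_{k-2}$ test space is a genuinely non-symmetric Petrov--Galerkin pairing, and its Schur complement after eliminating the bubbles inherits no symmetry. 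What the energy identity actually gives is injectivity, and, applied naively with nonzero $g$, only the lossy bound $\|\partial_y v\|_0\lesssim h^{-1}\|g\|_0$. The paper sidesteps the entire nonlocal-coupling issue with a single test-function choice: it builds a quasi-interpolant $R_hg\in W_h^y$ (the truncated Jacobi projection $Q_h^y g$ on interior elements, an interpolant on the two boundary elements so as to respect the homogeneous boundary conditions), tests \eqref{eqq:001} with $\theta=\partial_{yy}(v-R_hg)$, and integrates by parts; since $g-R_hg$ vanishes to second order at interior nodes, this yields $\|v-R_hg\|_1\lesssim h^{n-1/2}\|g\|_{n,\infty}$ in one stroke, after which inverse inequalities give $\|v-R_hg\|_{n,\infty}\lesssim\|g\|_{n,\infty}$ for every $n\le k$ simultaneously --- no Green's function, no conditioning analysis, and no separate polynomial-reproduction step for higher derivatives. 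To salvage your route you would have to actually carry out the banded-matrix/Green's-function analysis (or rigorously import the one-dimensional estimates you cite); as written, the proof of \eqref{eqq:002} is incomplete.
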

 \begin{proof}  To prove the uniqueness of $v$, we only need to show that the right hand side function $g=0$ can yield a unique zero solution, i.e., $v=0$. 
 To this end, we choose $\theta=\partial_{yy}v$ in \eqref{eqq:001} and use the integration by parts to obtain 
\[
   \int_{c}^d (\partial_y v)^2 dy=0, 
\]
   which yields, together with the homogenous boundary condition,
\[
   \partial_y v=0,\ \ v=0.
\]
 Consequently, $v(y)$ is well defined. 
 
 To estimate \eqref{eqq:002}, we define a special function $ R_{h}g\in W_h^y$ of $g$ as follows: 
\[
     R_{h}g|_{\tau_j^y}:=Q_h^y g,\ \ j\neq 1, N,  \ \   R_{h}g(y_{j,m})=g(y_{j,m}),\ \ j=1,N, m\in\bZ_{k-3}, 
\]
  where $y_{j,m}$ can be chosen as any interpolation points.   By the approximation theory, we have for all $n\le k$, 
\[
    \|g- R_{h}g\|_{0,p,\tau_j^y}\lesssim\left\{
    \begin{array}{lll}
     h^{n+1}\|g\|_{n+1,p,\tau_j^y},& j\neq 1,N,\\
     h^{n}\|g\|_{n,p,\tau_j^y}, & j=1,N.
     \end{array}
     \right.
\]
  Then we choose $\theta=\partial_{yy}(v- R_{h}g)$ in \eqref{eqq:001} to derive 
 \begin{eqnarray*}
 \|\partial_y(v- R_{h}g)\|_1^2&= &|(v- R_{h}g, \partial_{yy}(v- R_{h}g))|
= | (g- R_{h}g, \partial_{yy}(v- R_{h}g))|\\
&=&\sum_{j=2}^{N-1}| (\partial_y(g- R_{h}g), \partial_{y}(v- R_{h}g))| +h^{n+1}\|g\|_{n,\infty,\tau_0}\|v- R_{h}g\|_{2,\infty,\tau_0}\\
&\lesssim & h^{n-\frac 12}\|g\|_{n,\infty}\|v- R_{h}g\|_1,
 \end{eqnarray*}
  where $\tau_0=\tau_1^y\cup\tau_N^y$, and in the last step, we have used the inverse inequality $ \|v\|_{2,\infty}\lesssim h^{-\frac{3}{2}} \|v\|_{1}$ for any finite element function $v$. 
  Again, by the inverse inequality, we have 
 \[
    \|v- R_{h}g\|_n\lesssim h^{1-n}\|v- R_{h}g\|_{1}\lesssim h^{\frac 12}\|g\|_{n,\infty}, \ \ \forall n\le k, 
 \]
    and thus
 \[
     \|v- R_{h}g\|_{n,\infty}\lesssim h^{-\frac 12} \|v- R_{h}g\|_n\lesssim \|g\|_{n,\infty}. 
  \]
  Consequently, 
\[
   \|v\|_{n,\infty}\lesssim \|v- R_{h}g\|_{n,\infty}+\| R_{h}g\|_{n,\infty}\lesssim \|g\|_{n,\infty}. 
\]
 This finishes the proof of \eqref{eqq:002}.  $\Box$
 \end{proof}

\begin{lemma}\label{lemma:3}
  Assume that $u$ has the Jacobi expansion \eqref{uu} in each $\tau_{ij}$, and $\lambda_1,\lambda_2$ are defined in \eqref{lam:1}.
  Then the correction functions $w_l, 1\le l\le k-2$  in \eqref{wwx}-\eqref{wwx:1} are well defined.
  Furthermore, there exist functions $\mu_{l,p}(y)\in W^y_h$ such that 
\begin{equation}\label{eqq:7}
   \partial_xw_l|_{\tau_i^x}=h^{l-1}\sum_{p=k-l}^{k-1} \mu_{l,p}(y)\phi_{i,p}(x),\ \|\partial^n_y\mu_{l,p}\|_{0,\infty}\lesssim
   \|\lambda_1\|_{n+l,\infty}+\|\lambda_2\|_{n+l,\infty}. 
\end{equation}
  Consequently, if $u\in W^{k+1+l+n,\infty}$ with $n$ being some positive integer, then
\begin{equation}\label{opti:wl}
   \|\partial_y^n \partial_xw_l\|_{0,\infty}\lesssim h^{k+l}\|u\|_{k+l+n+1,\infty},\ \|\partial_y^n w_l\|_{0,\infty}\lesssim h^{\min(k+l+1,2k-2)}\|u\|_{k+l+n+1,\infty}.
\end{equation}
  Here $\phi_{i,n}(x)$ denotes the Lobatto polynomial of degree $n$ on $\tau_i^x$, i.e.,
\[
    \phi_{i,n}(x)=\phi_{n}(\frac{2x-x_i-x_{i-1}}{h_i})=\phi_n(s),\ \ s\in [-1,1],\ x\in(x_{i-1},x_i).
\]

\end{lemma}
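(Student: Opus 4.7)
The plan is to proceed by strong induction on $l\ge 1$, establishing simultaneously (a) existence and uniqueness of $w_l$, (b) the Lobatto representation for $\partial_x w_l$, and (c) the derivative estimate on $\mu_{l,p}$. The mechanism is to solve \eqref{wwx}--\eqref{wwx:1} element-wise in $x$ using a Lobatto ansatz, extract the scaling in $h$, and exploit Lobatto/Legendre orthogonality to force the low-index coefficients to vanish. A key preparatory identity, obtainable from the classical Jacobi differentiation formula $\partial_s J^{\alpha,\beta}_n = \tfrac{n+\alpha+\beta+1}{2}J^{\alpha+1,\beta+1}_{n-1}$ together with $\phi_n(s)=\tfrac{1}{2(n-1)}(s^2-1)J^{1,1}_{n-2}(s)$, is $\partial_s J^{-2,-2}_n(s) = -(n-2)(n-3)\phi_{n-1}(s)$. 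This immediately rewrites the seed $\partial_x w_0|_{\tau_i^x} = -\tfrac{2}{h_i}\bigl[(k-1)(k-2)\lambda_1\phi_{i,k}+k(k-1)\lambda_2\phi_{i,k+1}\bigr]$, setting up the base case.

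For each $l$, since $\partial_x w_l(x_i,y)=0$ forces $\partial_x w_l|_{\tau_i^x}$ to be a polynomial of degree $\le k-1$ vanishing at both endpoints, we use the ansatz $\partial_x w_l|_{\tau_i^x}(x,y)=\sum_{p=2}^{k-1}\nu_{l,p}^i(y)\phi_{i,p}(x)$. Then $\partial_{xx}w_l|_{\tau_i^x}=\tfrac{2}{h_i}\sum_p\nu_{l,p}^i L_{i,p-1}(x)$ since $\phi_n'(s)=L_{n-1}(s)$. Testing \eqref{wwx} against $\theta(x,y)=L_{i,j}(x)\theta_y(y)$ with $\theta_y\in\mathbb{P}_{k-2}(\tau_j^y)$ and using Legendre orthogonality isolates each coefficient:
\[
\nu_{l,j+1}^i(y)=\frac{(2j+1)h_i}{4\alpha}\int_{-1}^1 F_l(x(s),y)\,L_j(s)\,ds,\qquad j=1,\dots,k-2,
\]
where $F_l:=\beta_1\partial_x w_{l-1}+(\beta_2\partial_y-\alpha\partial_{yy}+\gamma)w_{l-2}$. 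Element-wise $\partial_x w_l$ is thereby uniquely determined, and $w_l$ is recovered globally by integrating in $x$ from $w_l(a,y)=0$, with $C^1$-continuity across $x$-nodes automatic (both $\partial_x w_l$ and the nodal values match) and across $y$-nodes coming from $\mu_{l,p}\in W_h^y$.

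The structural heart of the proof is showing, inductively in $l$, that $F_l$ (modulo $x$-constants, which the test space $\mathcal{S}^x_h$ annihilates) lies in $\operatorname{span}\{\phi_{i,q}:q\ge k-l+1\}$. Combined with $\phi_q\perp\mathbb{P}_{q-3}$, this gives $\int F_l L_j\,ds=0$ for $j\le k-l-2$, forcing $\nu_{l,p}^i=0$ for $p\le k-l-1$ and yielding the claimed range $p\in\{k-l,\ldots,k-1\}$. For $l=1$ this is immediate from the Jacobi--Lobatto identity. For the inductive step, $\partial_x w_{l-1}$ contributes only $\phi_{i,p}$ with $p\ge k-l+1$ by IH, while $w_{l-2}$ is reconstructed by integration using $\int_{-1}^s\phi_p(t)\,dt=\tfrac{1}{2p-1}(\phi_{p+1}(s)-\phi_{p-1}(s))$, so each index $p\ge k-l+2$ splits into $p+1$ and $p-1$, and the smallest surviving non-constant index drops to exactly $k-l+1$; the nodal/constant part is killed by the test space. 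A careful power counting then shows $F_l\sim h^{l-2}$, so that $\nu_{l,p}^i\sim h^{l-1}$; extracting this factor defines $\mu_{l,p}:=h^{-(l-1)}\nu_{l,p}^i$. The derivative estimate propagates since the worst contribution comes from $\alpha\partial_{yy}w_{l-2}$, which by IH requires $\|\lambda_j\|_{(n+2)+(l-2),\infty}=\|\lambda_j\|_{n+l,\infty}$, matching (iii); each $\mu_{l,p}$ lies in $W_h^y$ by invoking Lemma \ref{lemma:000} at each $y$-slice and the inductive regularity.

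The consequence bounds \eqref{opti:wl} then follow by combining (iii) with the Jacobi-coefficient estimate $\|\partial_y^n c_{i,k+1}\|_{0,\infty}\lesssim h^{k+1}\|u\|_{n+k+1,\infty}$ (and similarly for $c_{i,k+2}$), and the saturation at $2k-2$ in $\|w_l\|$ reflects that integration in $x$ trades one derivative for a factor of $h$. The main obstacle I expect is bookkeeping: (1) verifying that the orthogonality structure of $F_l$ survives the interplay between integration in $x$ (which inserts $\phi_{p\pm 1}$ combinations and a nodal constant) and differentiation in $y$ (which propagates two extra $y$-derivatives through $\partial_{yy}w_{l-2}$), and (2) confirming that the boundary decay $\lambda_1(c)=\lambda_2(c)=0$ (from $u|_{\partial\Omega}=0$) propagates through the recursion to give $w_l(x,c)=w_l(x,d)=0$; both require tracking the precise form of $\mu_{l,p}$ through the inductive step, which, while mechanical, is the place where nearly all the effort of the proof is concentrated.
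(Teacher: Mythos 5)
Your proposal is correct and follows essentially the same route as the paper: induction on $l$, a Lobatto/Legendre expansion of $\partial_x w_l$ (equivalently of $\partial_{xx}w_l$), the index-shifting identities for $\int_{-1}^{s}\phi_p$ and $\partial_s J_n^{-2,-2}$ to locate the surviving coefficients, and Lemma \ref{lemma:000} for the $y$-direction. The one point to tighten is your displayed formula for $\nu^i_{l,j+1}(y)$: it cannot hold pointwise in $y$ (the right-hand side need not lie in $W_h^y$), but only as the moment problem \eqref{eqq:001} whose $W_h^y$-solution is then constructed and bounded via Lemma \ref{lemma:000} — exactly as you acknowledge afterwards, so this is a matter of phrasing rather than a gap.
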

\begin{proof}
   Note that for any $w_l\in V_h$, we have
   $\partial^2_{xx}w_l\in\mathbb P_{k-2}(\tau_i^x)$. Soppose
 \[
    \partial_{xx}w_l|_{\tau_i^x}=\sum_{p=0}^{k-2}c_{l,p}(y)L_{i,p}(x). 
 \]
   Recalling the definition of $w_l$ in \eqref{wwx}, we easily obtain that $c_{l,p}(y)$ is the solution of \eqref{eqq:001} with the right hand side function 
 \[
     g(y)=\frac{2p+1}{h_i^x\alpha}\int_{\tau_i^x}(\beta_1\partial_x w_{l-1}+ (\beta_2\partial_y-\alpha\partial^2_{yy}+\gamma)w_{l-2})(x,y)L_{i,p}(x) dx. 
 \]
   In other words, $c_{l,p}(y)$ is uniquely determined, and thus
   $\partial_{xx}w_l$ is well-defined.   Then the homogenous boundary condition  in \eqref{wwx:1} indicates a unique function $w_l$ from $\partial_{xx}w_l$. 
     Consequently $w_l$ is uniquely  defined.

  We  prove \eqref{eqq:7} by the method of mathematical induction.  We first show \eqref{eqq:7} is valid for $l=1$. Note that $w_0(x_{i},y)=0, y\in [c,d], i\in\bZ_M$ and
  $\partial_x\theta|_{\tau_i^x}\in {\mathbb P}_{k-3}(x)$ for any $\theta\in W_h$.
  By \eqref{wwx} and the integration by parts,  we have
\begin{eqnarray*}
   \alpha(\partial_xw_1,\partial_x\theta)&=&\beta_1(w_0,\partial_x\theta)=\beta_1\sum_{i=1}^M\int_{c}^d \lambda_1(y)dy\int_{x_{i-1}}^{x_i}J^{-2,-2}_{i,k+1}(x)\partial_x\theta(x,y) dx\\
   %  &=&\frac{\beta_1c_{k+1}}{2k-1}\beta_1\sum_{i=1}^M\int_{c}^d \lambda_1(y)dy\int_{x_{i-1}}^{x_i}(\phi_{i,k+1}-\phi_{i,k-1})(x)\partial_x\theta(x,y) dx\\
     &=&-\frac{4\beta_1(k-1)(k-2)}{2k-1}\sum_{i=1}^M\int_{c}^d \lambda_1(y)dy\int_{x_{i-1}}^{x_i}\phi_{i,k-1}(x)\partial_x\theta(x,y) dx.
\end{eqnarray*}
  Here  in the last step, we have used   $J_{n+1}^{-2,-2}(s)=\frac{4(n-1)(n-2)}{2n-1}(\phi_{n+1}-\phi_{n-1})(s)$ 
 and the fact  $\phi_{k+1}\bot \mathbb P_{k-2}$.  Since $\lambda_1(c)=\lambda_1(d)=0$
  and the above equation holds for any $\theta$, then 
\[
   \partial_xw_{1}|_{\tau_i^x}=-\frac{4\beta_1(k-1)(k-2)}{2k-1}\lambda_1(y)\phi_{i,k-1}(x),
\]
   and thus \eqref{eqq:7} holds true for $l=1$ with
\[
  \mu_{1,k-1}(y)=-\frac{4\beta_1(k-1)(k-2)}{2k-1}\lambda_1(y),  \ \ \|\partial_y^n\mu_{1,k-1}\|_{0,\infty}\lesssim \|\partial_y^{n}\lambda_1\|_{0,\infty}\lesssim\|\lambda_1\|_{n,\infty}. 
\]

   Now we suppose \eqref{eqq:7} is valid for all $l$ and prove it holds true for $l+1$  with $l\le k-3, k\ge 4$.
 By the induction assumption and the orthogonality  of the Lobatto polynomials, we get
\[
   \partial_xw_l|_{\tau_i^x}=h^{l-1}\sum_{p=k-l}^{k-1} \mu_{l,p}(y)\phi_{i,p}(x)\bot \mathbb P_0(\tau_i^x),\ \ \forall l\le k-3,
\]
  and thus
\begin{eqnarray*}\label{wwl}
\begin{split}
   w_l(x,y)&=\int_{a}^x\partial_xw_l(x,y)dx
   =h^{l-1}\sum_{p=k-l}^{k-1} \mu_{l,p}\int_{x_{i-1}}^x\phi_{i,p}(x)dx&\\
   &=h^{l}\sum_{p=k-l}^{k-1}\frac{\mu_{l,p}}{2p-1} (\phi_{i,p+1}-\phi_{i,p-1})(x)
   =h^{l}\sum_{p=k-l-1}^k(\frac{\mu_{l,p-1}}{2p-3}-\frac{\mu_{l,p+1}}{2p+1}) \phi_{i,p}(x),&
\end{split}
\end{eqnarray*}
  where we use the notation $\mu_{l,p}=0$ for all $ p=k-l-1,k-l-2,k+1$, and
  in the forth step, we have used \eqref{lob}.
  Similarly, there holds
\begin{eqnarray*}
  w_{l-1}(x,y)|_{\tau_i^x}= h^{l-1}\sum_{p=k-l}^k(\frac{\mu_{l-1,p-1}}{2p-3}-\frac{\mu_{l-1,p+1}}{2p+1})\phi_{i,p}(x)\bot\mathbb P_0(\tau_i^x),\ \ \forall l\le k-3.
\end{eqnarray*}
  By defining
\[
   \partial_x^{-1}v(x,\cdot)=\int_{a}^x v(x,\cdot)dx,
\]
  and using the fact that $w_{l-1}\bot {\mathbb P}_0(\tau_i^x)$ and \eqref{lob}, we get
\begin{eqnarray*}
   \partial_x^{-1}w_{l-1}(x,y)&=&
   h^{l-1}\sum_{p=k-l}^k(\frac{\mu_{l-1,p-1}}{2p-3}-\frac{\mu_{l-1,p+1}}{2p+1})\int_{x_{i-1}}^x\phi_{i,p}(x)dx\\
 % &=& h^{l}\sum_{p=k-l}^k (\frac{\mu_{l-1,p-1}}{2p-3}-\frac{\mu_{l-1,p+1}}{2p+1})\frac{1}{2p-1} (\phi_{i,p+1}-\phi_{i,p-1})(x)\\
  &=&h^{l}\sum_{p=k-l-1}^k \bar\mu_{l-1,p}\phi_{i,p}(x),
\end{eqnarray*}
  where
\begin{equation}
   \bar\mu_{l-1,p}=(\frac{\mu_{l-1,p-2}}{2p-5}-\frac{\mu_{l-1,p}}{2p-1})\frac{1}{2p-3} -(\frac{\mu_{l-1,p}}{2p-1}-\frac{\mu_{l-1,p+2}}{2p+3})\frac{1}{2p+1},
\end{equation}
   with
$
   \mu_{l-1,p}=0,\ \forall p\le k-l,\ {\rm or}\ p\ge k+1.$
   Note that
\begin{equation}\label{wi}
   w_l(x_i,\cdot)=\partial_x^{-1}w_{l-1}(x_i,\cdot)=0,\ \ \forall i\in\bZ_M, l\le k-3.
\end{equation}
  By  \eqref{wwx:1} and the integration by parts, 
\begin{eqnarray*}
   \alpha(\partial_xw_{l+1},\partial_x\theta)&=&(\beta_1 w_{l}+ (\beta_2\partial_y-\alpha\partial^2_{yy}+\gamma)\partial^{-1}_xw_{l-1},\partial_x\theta)\\
      &=&h^l\sum_{\tau_{i,j}\in{\mathcal T}_h}\sum_{p=k-l-1}^{k-1}\left(\bar c_{i,p}\phi_{i,p},\partial_x\theta\right)_{\tau_{i,j}}.
\end{eqnarray*}
  Here $(u,v)_{\tau}=\int_{\tau} (uv)(x,y) dxdy$,  and 
 \[
    \bar c_{i,p}=\beta_1\big(\frac{\mu_{l,p-1}}{2p-3}-\frac{\mu_{l,p+1}}{2p+1}\big)+(\beta_2\partial_y-\alpha\partial_{yy}+\gamma)\bar \mu_{l-1,p}. 
 \]
   Consequently, 
  \begin{eqnarray*}
   \partial_xw_{l+1}|_{\tau_{ij}}
   =h^l\sum_{p=k-l-1}^{k-1}\mu_{l+1,p}(y)\phi_{i,p}(x)
\end{eqnarray*}
 with  $ \mu_{l+1,p}(y)\in W_h^y$ is the solution of the following equation: 
\[
    \int_{c}^d \mu_{l+1,p}(y) v(y) dy=\int_{c}^d\bar c_{i,p}v(y)dy,\ \ \forall v\in \mathbb P_{k-2}(y). 
\]
In light of the conclusion in Lemma \ref{lemma:000}, we have     
\begin{eqnarray*}
   \|\partial_y^n\mu_{l+1,p}\|_{0,\infty}&\lesssim& \|\partial_y^n\mu_{l,p-1}\|_{0,\infty}+\|\partial_y^n\mu_{l,p+1}\|_{0,\infty}+
   \|\partial_y^{n+2}\bar\mu_{l-1,p}\|_{0,\infty}\\
   &\lesssim& \|\lambda_1\|_{n+l+1,\infty}+\|\lambda_2\|_{n+l+1,\infty}.
\end{eqnarray*}
 In other words, \eqref{eqq:7} is also valid for $l+1$ and thus holds true for all $l\le k-2$.

  We next prove \eqref{opti:wl}. By \eqref{eqq:7}, we easily get 
\[
    \|\partial_y^n\partial_xw_l\|_{0,\infty}\lesssim h^{l-1} \sum_{p=k-l-1}^k \|\partial_y^n\mu_{l,p}\|_{0,\infty}\lesssim h^l(
    \|\lambda_1\|_{n+l,\infty}+\|\lambda_2\|_{n+l,\infty}).
\]  
  Recalling the definition of $\lambda_i, i=1,2$ and using the estimates for $E^xu$, we have 
 \[
  \|\partial_y^{n+l}\lambda_1\|_{0,\infty}+\|\partial_y^{n+l}\lambda_2\|_{0,\infty}
  \lesssim \|\partial_y^{n+l} E^xu\|_{0,\infty}
  \lesssim  h^{k+1}\|u\|_{k+l+n+1,\infty}.
 \]
   Then 
\[
\|\partial_y^n\partial_xw_l\|_{0,\infty}\lesssim h^{k+l}\|u\|_{k+l+n+1,\infty},\ \ l\in\bZ_{k-2}.
\]
  This finishes the proof of the first inequality of \eqref{opti:wl}.  Similarly,   
  by using  \eqref{wwl}  and the estimates of $\lambda_1,\lambda_2$, we have for all $l\le k-3$, 
\begin{eqnarray*}
    \|\partial_y^nw_l\|_{0,\infty}\lesssim h^l \sum_{p=k-l-1}^k \|\partial_y^n\mu_{l,p}\|_{0,\infty}   &\lesssim & h^{k+1+l}\|u\|_{k+l+n+1,\infty}. 
\end{eqnarray*}
 As for $l=k-2$, we have, from the Poincar\'{e} inequality,
\begin{eqnarray*}
   \|\partial^n_yw_{k-2}\|_{0,\infty}\lesssim \|\partial^n_y\partial_xw_{k-2}\|_{0,\infty}
   \lesssim h^{2k-2}\|u\|_{2k-1+n,\infty}.
\end{eqnarray*}
    Then the second inequality of \eqref{opti:wl} follows. 
  This finishes our proof. $\Box$
\end{proof}

  Now we are ready to construct the correction funciton $w_h^x$. Define 
\begin{equation}\label{whx}
  w^x_h(x,y)=\sum\limits_{l=1}^{k-2}w_l(x,y)
\end{equation}
with $w_l$ defined by \eqref{wwx}-\eqref{wwx:1}. We have the following property for the correction function $w_h^x$. 

\begin{theorem}\label{theo:3}
 Let $w^x_h(x,y)\in V_h$ be defined by \eqref{whx}. 
   Then
 \begin{equation}\label{eq:14}
    w_h^x(a,y)=w_h^x(x,c)=w_h^x(x,d)=0,\ \ w_h^x(b,y)=w_{k-2}(b,y).
 \end{equation}
 Furthermore, if  $u\in W^{2k+1,\infty}(\Omega)$, then
\begin{equation}\label{a:4}
   |a(E^xu+w^x_h,\theta)|\lesssim h^{2k-2}\|u\|_{2k+1,\infty}\|\theta\|_{0},\ \ \forall \theta\in W_h.
\end{equation}
  %Especially, if $\theta=v_{xxyy}\in W_h$ with $\forall v\in V_h$, then
%\begin{equation}
%   |a(E^xu+w^x_h,v_{xxyy})|\lesssim h^{2k-2}\|u\|_{2k+2,\infty}\|v_{xxy}\|_{0},\ \ \forall  v\in V_h.
%\end{equation}
\end{theorem}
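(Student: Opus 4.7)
The boundary identities in \eqref{eq:14} follow directly from the construction. By \eqref{wwx:1}, each $w_l$ vanishes at $x=a$, $y=c$, and $y=d$, so summing gives the corresponding identities for $w_h^x$. Moreover, \eqref{wi} established that $w_l(x_i,y)=0$ for every $i\in\bZ_M$ whenever $l\le k-3$; in particular $w_l(b,y)=0$ for $l\le k-3$, leaving only the contribution $w_h^x(b,y)=w_{k-2}(b,y)$.

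For \eqref{a:4}, I would decompose $\theta\in W_h$ as $\theta=\theta_0+\theta_1$, where $\theta_0$ is the elementwise $L^2(x)$-projection of $\theta$ onto functions constant in $x$ (so $\theta_0|_\tau\in\mathbb{P}_0(x)\otimes\mathbb{P}_{k-2}(y)$) and $\theta_1:=\theta-\theta_0\in\mathcal{S}^x_h$, with $\|\theta_i\|_0\lesssim\|\theta\|_0$, and estimate $|a(E^xu+w_h^x,\theta_i)|$ separately. For $\theta_1$, the core idea is a telescoping identity: substituting $-\alpha(\partial_{xx}w_l,\theta_1)=-(\beta_1\partial_x w_{l-1}+(\beta_2\partial_y-\alpha\partial_{yy}+\gamma)w_{l-2},\theta_1)$ from \eqref{wwx} into $a(w_l,\theta_1)$ and summing over $l=1,\ldots,k-2$ causes the intermediate $w_l$'s to telescope (using $w_{-1}=0$), yielding
\begin{equation*}
a(w_h^x,\theta_1)=(\beta_1\partial_x(w_{k-2}-w_0),\theta_1)+\bigl((\beta_2\partial_y-\alpha\partial_{yy}+\gamma)(w_{k-3}+w_{k-2}-w_0),\theta_1\bigr).
\end{equation*}
Combining with \eqref{EExu:1} gives
\begin{equation*}
a(E^xu+w_h^x,\theta_1) = \bigl(\beta_1\partial_x(u_0-w_0)+(\beta_2\partial_y-\alpha\partial_{yy}+\gamma)(u_0-w_0),\theta_1\bigr)+R(\theta_1),
\end{equation*}
where $R(\theta_1):=\bigl(\beta_1\partial_x w_{k-2}+(\beta_2\partial_y-\alpha\partial_{yy}+\gamma)(w_{k-3}+w_{k-2}),\theta_1\bigr)$. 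Since $u_0-w_0$ involves only the coefficients $c_{i,k+1}-Q_h^y c_{i,k+1}$ and $c_{i,k+2}-Q_h^y c_{i,k+2}$, combining the $O(h^{k+1})$ intrinsic size of $c_{i,p}$ with the $O(h^{k+1})$ approximation error of $Q_h^y$ shows that the first bracket is of order $O(h^{2k})$ and strictly higher order than needed; the remainder $R(\theta_1)$ is bounded using \eqref{opti:wl} at $l=k-3,k-2$, which gives $\|\partial_x w_{k-2}\|_{0,\infty}, \|\partial_y^n w_{k-3}\|_{0,\infty}, \|\partial_y^n w_{k-2}\|_{0,\infty}\lesssim h^{2k-2}\|u\|_{2k+1,\infty}$ for $n=0,1,2$, which produces the target bound after Cauchy--Schwarz.

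For the $\theta_0$ component, note that $(\partial_{xx}w_l,\theta_0)=\int\theta_0(y)\bigl[\partial_x w_l(x_i,y)-\partial_x w_l(x_{i-1},y)\bigr]\,dy=0$ thanks to \eqref{wwx:1}, and that $(\partial_x w_l,\theta_0)=\int\theta_0(y)\bigl[w_l(x_i,y)-w_l(x_{i-1},y)\bigr]\,dy=0$ for all $l\le k-3$ by \eqref{wi}. The only surviving contributions to $a(w_h^x,\theta_0)$ therefore come from $(\beta_1\partial_x w_{k-2},\theta_0)$ and the $\partial_y^m$-terms, all of which are controlled by \eqref{opti:wl}. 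For $a(E^xu,\theta_0)$, the identity $\int_{\tau_i^x}\partial_x J_{i,p}^{-2,-2}\,dx=0$ (a consequence of \eqref{eq:3}) kills the $\beta_1\partial_x E^xu$ piece; the remaining pieces involve $\int_{-1}^1 J_p^{-2,-2}(s)\,ds=\int_{-1}^1(1-s)^2(1+s)^2 J_{p-4}^{2,2}(s)\,ds$, which by the weighted orthogonality of $J_{p-4}^{2,2}$ vanishes for $p\ge 5$, i.e., for all $k\ge 4$. In the exceptional case $k=3$, the sole contributing mode is $p=4$ with coefficient $c_{i,4}(y)$ of size $O(h^4)=O(h^{2k-2})$, and a direct estimate suffices.

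\textbf{Main obstacle.} The most delicate step is the precise bookkeeping of which boundary terms survive the telescoping sum over $l=1,\ldots,k-2$, combined with the separate treatment of the $\theta_0$ component (for which \eqref{wwx} cannot be invoked directly and one must instead exploit the nodal conditions $\partial_x w_l(x_i,y)=0$ and $w_l(x_i,y)=0$ for $l\le k-3$, together with the Jacobi orthogonality \eqref{eq:3}). A secondary delicacy is verifying that $c_{i,p}-Q_h^y c_{i,p}$ genuinely gains an extra factor of $h^{k+1}$ on top of the $O(h^{k+1})$ intrinsic size of $c_{i,p}$, so that the $u_0-w_0$ contribution is strictly higher order than the final $h^{2k-2}$ target.
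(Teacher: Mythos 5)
Your argument reproduces the paper's proof almost step for step: the same splitting $\theta=\theta_0+\theta_1$, the same telescoping of \eqref{wwx} over $l=1,\dots,k-2$, the same reduction of $a(E^xu,\theta)$ to $a(u_0,\theta)$ and comparison of $u_0$ with $w_0$ through the $Q_h^y$ approximation error, the same case distinction $k=3$ versus $k\ge 4$ for $a(u_0,\theta_0)$, and the same use of \eqref{wwx:1} and \eqref{wi} for the boundary identities. The remainder $R(\theta_1)$ is exactly the paper's $I_\theta$ and is bounded the same way.

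There is, however, a genuine gap in your treatment of the $\theta_0$ component when $k\ge 5$. After you kill $(\partial_{xx}w_l,\theta_0)$ for all $l$ and $(\partial_x w_l,\theta_0)$ for $l\le k-3$, you are left with $\sum_{l=1}^{k-2}\bigl((\beta_2\partial_y-\alpha\partial_{yy}+\gamma)w_l,\theta_0\bigr)$, and you propose to bound every summand by \eqref{opti:wl}. But \eqref{opti:wl} only gives $\|\partial_y^n w_l\|_{0,\infty}\lesssim h^{\min(k+l+1,\,2k-2)}\|u\|_{k+l+n+1,\infty}$, which for small $l$ (e.g.\ $l=1$ with $k\ge 5$) is $h^{k+l+1}$, strictly weaker than the target $h^{2k-2}$; a size estimate alone cannot close these terms. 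The paper instead uses an orthogonality: from \eqref{eqq:7}, $w_l$ is elementwise a combination of Lobatto polynomials $\phi_{i,p}$ with $p\ge k-l-1$, hence $w_l\bot\mathbb P_0(x)$ for every $l\in\bZ_{k-4}$; since $\theta_0$ is constant in $x$ and the $y$-derivatives commute with integration in $x$, all summands with $l\le k-4$ vanish identically, and only $l=k-3,k-2$ survive, for which $\min(k+l+1,2k-2)=2k-2$ and \eqref{opti:wl} does suffice. You need to add this orthogonality to make the $\theta_0$ estimate close for $k\ge 5$ (for $k=3,4$ your crude bound happens to work). A secondary, harmless imprecision: the $\partial_{yy}(u_0-w_0)$ contribution in your ``first bracket'' is $O(h^{2k-1})$ rather than $O(h^{2k})$ by the $Q_h^y$ estimates, which is still well below the $h^{2k-2}$ threshold.
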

\begin{proof}
   First, \eqref{eq:14} follows directly from the conclusions in Lemma \ref{lemma:3}, \eqref{wwx:1} and \eqref{wi}.
   Note that any $\theta\in W_h$ can be decomposed into
\[
   \theta=\theta_0+\theta_1, \ \ \theta_1|_{\tau}\in ({\mathbb P}_{k-2}(x)\setminus\mathbb P_{0}(x)) \times \mathbb P_{k-2}(y),\ \theta_0|_{\tau}\in \mathbb P_{0}(x) \times \mathbb P_{k-2}(y).
\]
  Since $\theta_1\in {\mathcal S}^x_h$ with ${\mathcal S}_h^x$ defined by \eqref{sh},  we have, from \eqref{w0}-\eqref{wwx},
\begin{eqnarray*}
     a(w^x_h,\theta_1)&=&\sum_{l=1}^{k-2}( -\alpha \triangle w_l+{\bf \beta}\cdot\nabla w_l+\gamma w_l ,\theta_1)\\
     &=&((\beta_2\partial_y-\alpha\partial_{yy}+\gamma)(w_{k-2}-w_0+w_{k-3}),\theta_1 )+\beta_1(\partial_xw_{k-2}-\partial_xw_0,\theta_1)\\
     &=& I_{\theta_1}-((\beta_2\partial_y-\alpha\partial_{yy}+\gamma)w_0,\theta_1)-\beta_1(\partial_xw_0,\theta_1),
\end{eqnarray*}
  where $w_0$ is defined in \eqref{w0}, and 
\[
  I_{\theta}=((\beta_2\partial_y-\alpha\partial_{yy}+\gamma)(w_{k-2}+w_{k-3}),\theta )+\beta_1(\partial_xw_{k-2},\theta).
\]
  As for the term $a(w^x_h,\theta_0)$,
  we use the properties of $w_l$ in \eqref{eqq:7} to obtain that
\[
  \partial_{xx}w_n\bot\mathbb P_0(x), n\in\bZ_{k-2},\ \   \partial_xw_m\bot\mathbb P_0(x), m\in\bZ_{k-3},  \ \
  w_l\bot\mathbb P_0(x), l\in\bZ_{k-4}, \ {\rm if}\ k\ge 4,
\]
   and thus, 
\[
   a(w^x_h,\theta_0)=((\beta_2\partial_y-\alpha\partial_{yy}+\gamma)(w_{k-2}+w_{k-3}),\theta_0 )+\beta_1(\partial_xw_{k-2},\theta_0)=I_{\theta_0}.
\]
 Consequently,
\begin{eqnarray*}
   a(w^x_h,\theta)=a(w^x_h,\theta_0+\theta_1)&=&I_{\theta}-((\beta_2\partial_y-\alpha\partial_{yy}+\gamma)w_0,\theta_1)-\beta_1(\partial_xw_0,\theta_1)\\
    &=& I_{\theta}-a(w_0,\theta_1).
\end{eqnarray*}
  On the other hand,  we have from \eqref{EExu:1},
\begin{eqnarray*}
    a(E^xu,\theta)=
    ((\beta_1\partial_x+\beta_2\partial_y-\alpha\partial_{yy}+\gamma)u_0,\theta_1+\theta_0)=a(u_0,\theta_1)+a(u_0,\theta_0),
 \end{eqnarray*}
  and thus
\begin{equation}\label{eq:10}
    a(E^xu+w^x_h,\theta)=I_{\theta}
    +a(u_0,\theta_0)+a(u_0-w_0,\theta_1).
\end{equation}
 By \eqref{opti:wl} and Cauchy-Schwarz inequality, we have
\begin{align}\label{a:3}
   |I_{\theta}|\lesssim \left(\|\partial_xw_{k-2}\|_0+\sum_{n=0}^2\sum_{l=k-3}^{k-2}\|\partial_y^nw_{l}\|_{0}\right)\|\theta\|_0
   \lesssim h^{2k-2}\|u\|_{2k+1,\infty}\|\theta\|_0.
\end{align}
  As for the term $a(u_0,\theta_0)$,  noticing that $\theta_0\in\mathbb P_{0}(x)$,  we have from  \eqref{vip}
\[
   |a(u_0,\theta_0)|=|((\beta_2\partial_y-\alpha\partial_{yy}+\gamma)u_0,\theta_0)|\lesssim h^m\|u\|_{m+2,\infty}\|\theta_0\|_0,\ \ m\le k+1
\]
  for $k=3$.  While for $k\ge 4$, we have $u_0\bot \mathbb P_{k-4}$, which yields
\[
   a(u_0,\theta_0)=0.
\]
  Consequently, 
\begin{equation}\label{a:1}
    |a(u_0,\theta_0)|\lesssim h^{k+1}\|u\|_{k+3,\infty}\|\theta\|_0\lesssim h^{2k-2}\|u\|_{2k,\infty}\|\theta\|_0,\ \ \forall k\ge 3.
\end{equation}
  To estimate the error $u_0-w_0$, we recall the definition of $u_0,w_0$ in \eqref{vip} and \eqref{w0} and then
  use the estimate of $E^xu$ to obtain 
\begin{eqnarray*}
   \|\partial_y^n(u_0-w_0)\|_{0,\infty,\tau_{i,j}} \lesssim \|\partial_y^n(Q_h^yE^xu-E^xu)\|_{0,\infty}
   \lesssim  h^{2k+1-n}\|u\|_{2k+1,\infty}.
\end{eqnarray*}
  Similarly,  we get
\begin{eqnarray*}
   \|\partial_x(u_0-w_0)\|_{0,\infty,\tau_{i,j}} \lesssim h^{-1}\|Q_h^yE^xu-E^xu\|_{0,\infty}    \lesssim  h^{2k}\|u\|_{2k+1,\infty}.
\end{eqnarray*}
 Consequently,
\[
   |((\beta_2\partial_y-\alpha\partial_{yy}+\gamma)(u_0-w_0),\theta_1)+\beta_1(\partial_x(u_0-w_0),\theta_1)|\lesssim h^{2k-2}\|u\|_{2k+1,\infty}\|\theta_1\|_0.
\]
 Substituting \eqref{a:3}-\eqref{a:1} into \eqref{eq:10} yields
\begin{eqnarray*}
|a(E^xu+w^x_h,\theta)|
&\lesssim &  h^{2k-2}\|u\|_{2k+1,\infty}(\|\theta\|_0+\|\theta_1\|_0)
\lesssim h^{2k-2}\|u\|_{2k+1,\infty}\|\theta\|_0.
\end{eqnarray*}
  This finishes the proof of \eqref{a:4}.   $\Box$
%%
%By choosing $\theta=v_{xxyy}$ for all $v\in V_h$, we have
%\begin{eqnarray*}
%    I_{\theta}&=&((\beta_2\partial_y-\alpha\partial_{yy}+\gamma)(w_{k-2}+w_{k-3}),v_{xxyy} )+\beta_1(\partial_xw_{k-2},v_{xxyy})\\
%     &=&(\partial_y(\beta_2\partial_y-\alpha\partial^2_{yy}+\gamma)(w_{l}+w_{l-1}),v_{xxy} )-\beta_1(\partial^2_{xy}w_{k-2},v_{xxy})\\
%     &\lesssim & h^{2k-2}\|u\|_{2k+2,\infty}\|v_{xxy}\|_0,\\
%\end{eqnarray*}
%  where in the second step, we have used the integration by parts and the fact that $w_l$ is continuous about $y$ and
%$w_l(x,c)=w_l$

% |a(E^xu,\theta_0)|&=&|(\partial_y(\beta_2\partial_y-\alpha\partial_{yy}+\gamma)E^xu,\partial_y^{-1}\theta_0)|\lesssim h^{k+1}\|u\|_{k+4,\infty}\|v_{xxy}\|_0,\ \ k=3
 %Consequently,
%\[
%  |a(E^xu+w_h,v_{xxyy})|\lesssim h^{2k-2}\|u\|_{2k+2,\infty}\|v_{xxy}\|_0
%\]
\end{proof}

\subsection{Correction function for the error  $a(E^yu,\theta)$}

  The construction of the correction function $w_h^y$ for $a(E^yu,\theta)$ is similar to that of $w_h^x$. 
  To be more precise, we suppose 
  in each element $\tau_{ij}$, 
\[
   E^yu|_{\tau_{ij}}=\sum_{q=k+1}^{\infty}\varsigma_{j,q}(x)J_{j,q}^{-2,-2}(y).
\]
  Let 
 \[
    \bar w_0(x,y)|_{\tau_{ij}}=Q_h^x\varsigma_{j,k+1}(x)J_{i,k+1}^{-2,-2}(y)+Q_h^x\varsigma_{j,k+2}(x) J_{i,k+2}^{-2,-2}(y),\ \ \bar w_{-1}(x,y)=0.
 \]
  For all $l\in\bZ_{k-2}$,  we define a series of functions $\bar w_l\in V_h$ as follows:
%\begin{equation}\label{ww}
%   \partial_{x}w_{l}=\beta_1 w_{l-1}+ (\beta_2\partial_y-\alpha\partial_{yy}+\gamma)\partial_x^{-1}w_{l-2},\ \ 2\le l\le k-2.
%\end{equation}
  \begin{eqnarray*}\label{ww}
  &&\alpha(\partial_{yy}\bar w_{l},\theta)=(\beta_2\partial_y \bar w_{l-1}+ (\beta_1\partial_x-\alpha\partial_{xx}+\gamma)\bar w_{l-2},\theta ),\ \ \forall\theta\in{\mathcal S}^y_h,\\ \label{wwy:1}
  &&\partial_{y}\bar w_{l}(x,y_j)=0,\ \ \bar w_{l}(x,c)=0, \bar w_{l}(a,y)=\bar w_{l}(b,y)=0,\ \ \forall (x,y)\in\Omega, 
\end{eqnarray*}
  where
\[
   {\mathcal S}^y_h:=\{\theta(x,y): \theta\in  \mathbb P_k(x)\times(\mathbb P_{k-2}(y)\setminus \mathbb P_0(y))\}.
\]
  Following the same argument as that in Lemma \ref{lemma:3}, we get $\bar w_l(x,y_j)=0$ for all $l\le k-3$ and
\begin{align}\label{opti:w2}
  \|\partial_x^n \partial_y\bar w_l\|_{0,\infty}\lesssim h^{k+l}\|u\|_{k+l+n+1,\infty},\
    \|\partial_x^n \bar w_l\|_{0,\infty}\lesssim h^{\min(k+l+1,2k-2)}\|u\|_{k+l+n+1,\infty}.
\end{align}
  Define
\begin{equation}\label{wy}
  w_h^y(x,y)=\sum_{l=1}^{k-2}\bar w_l(x,y),
\end{equation}
  and follow what we have done  in Theorem \ref{theo:3}, we get
\begin{equation}\label{eq:15}
    w_h^y(x,c)=w_h^y(a,y)=w_h^y(b,y)=0,\ \ w_h^y(x,d)=\bar w_{k-2}(x,d),
\end{equation}
  and
\begin{equation}\label{eq:13}
   |a(E^yu+w^y_h,\theta)|\lesssim h^{2k-2}\|u\|_{2k+1,\infty}\|\theta\|_{0},\ \ \forall \theta\in W_h.
\end{equation}

\subsection{Proof of Proposition \ref{proposition:2}}

Define the correction function by
\[
  w_h(x,y)=(w_h^x+w_h^y)(x,y)-\frac{x-a}{b-a}w_{k-2}(b,y)-\frac{y-c}{d-c}\bar w_{k-2}(x,d),
\]
  where $w_h^x,w_h^y$ are given by \eqref{whx}, \eqref{wwx}-\eqref{wwx:1} and \eqref{wy}.
As a direct consequence of \eqref{eq:14} and \eqref{eq:15}, we have
\[
   w_h(a,y)=w_h(b,y)=w_h(x,c)=w_h(x,d)=0.
\]
  In other words, $w_h\in V_h^0$.

  Now we are ready to prove the conclusion of Proposition \ref{proposition:2}.

\begin{proof}  By using the properties and estimates  of $E^xE^yu$ in Lemma \ref{lemma:1}, we have
\[
   |a(E^xE^yu,\theta)|
   \lesssim  h^{2k-2}\|u\|_{2k-1}\|\theta\|_0,\ \ \forall \theta\in W_h.
\]
  Let $$\tilde w(x,y)=\frac{x-a}{b-a}w_{k-2}(b,y)-\frac{y-c}{d-c}\bar w_{k-2}(x,d).$$ By \eqref{opti:wl} and \eqref{opti:w2}, we have
\[
   a(\tilde w,\theta)\lesssim \sum_{n=0}^2(\|\partial_y^nw_{k-2}\|_{0,\infty}+\|\partial_x^n\bar w_{k-2}\|_{0,\infty})\|\theta\|_0
   \lesssim h^{2k-2}\|u\|_{2k+1,\infty}\|\theta\|_0,
\]
  which yields, together with \eqref{a:4} and \eqref{eq:13},
\begin{eqnarray*}
  |a(\eta+w_h,\theta)|
               \lesssim  h^{2k-2}\|u\|_{2k+1,\infty}\|\theta\|_{0}.
\end{eqnarray*}
 Then \eqref{eqq:8} follows.

 We next prove \eqref{esti:ww}-\eqref{esti:ww1}.  By \eqref{opti:wl} and \eqref{opti:w2}, we have
\begin{eqnarray*}
   &&\|w_h\|_{0,\infty}\lesssim \sum_{l=1}^{k-2}(\|w_l\|_{0,\infty}+\|\bar w_l\|_{0,\infty})\lesssim h^{\min{(k+2,2k-2)}}\|u\|_{2k+1,\infty},\\
   &&\|w_h\|_{m,\infty}\lesssim \sum_{l=1}^{k-2}(\|w_l\|_{m,\infty}+\|\bar w_l\|_{m,\infty})\lesssim h^{k+2-m}\|u\|_{2k+1,\infty},\ \ m=1,2. 
\end{eqnarray*}
   Then \eqref{esti:ww} follows.  By \eqref{eqq:7} and \eqref{wi} , we have $\partial_xw_h^x(x_i,y_j)=0$ and
\[
   |\partial_y^n w_h^x(x_i,y_j)|= |\partial_y^n  w_{k-2}(x_i,y_j)|\le \|\partial_y^n w_{k-2}\|_{0,\infty}\lesssim h^{2k-2}\|u\|_{2k+1,\infty},\ n=0,1.
\]
  Similarly,  there holds
\[
   |\partial_x^n w_h^y(x_i,y_j)|= |\partial_y^n  \bar w_{k-2}(x_i,y_j)|\lesssim h^{2k-2}\|u\|_{2k+1,\infty},\ n=0,1,\ \ \partial_yw_h^y(x_i,y_j)=0.
\]
 Consequently,
\begin{eqnarray*}
   |w_h(x_i,y_j)|+|\nabla w_h(x_i,y_j)|\lesssim h^{2k-2}\|u\|_{2k+1,\infty}.
\end{eqnarray*}
 This finishes the proof of \eqref{esti:ww1}. The proof is complete. $\Box$

\end{proof}

\end{document}